 \newtheorem{thm}{Theorem}[section]
 \newtheorem{lem}[thm]{Lemma}
 \newtheorem{prop}[thm]{Proposition}
 \newtheorem{cor}[thm]{Corollary}
 \newtheorem{rem}[thm]{Remark}
 \newcommand{\bthm}{\begin{thm}}
 \newcommand{\ethm}{\end{thm}}
 \newcommand{\bd}{\begin{defin}}
 \newcommand{\ed}{\end{defin}}
 \newcommand{\blem}{\begin{lem}}
 \newcommand{\elem}{\end{lem}}
 \newcommand{\bcor}{\begin{cor}}
 \newcommand{\ecor}{\end{cor}}
 \newcommand{\bprop}{\begin{prop}}
 \newcommand{\eprop}{\end{prop}}
 \newcommand{\brem}{\begin{rem} \rm}
 \newcommand{\erem}{\end{rem}}
 \newcommand{\bex}{\begin{ex} \rm}
 \newcommand{\eex}{\end{ex}}
 \newcommand{\beq}{\begin{equation}}
 \newcommand{\eeq}{\end{equation} }
 \newcommand{\bea}{\begin{eqnarray}}
 \newcommand{\eea}{\end{eqnarray}}
 \newcommand{\beas}{\begin{eqnarray*}}
 \newcommand{\eeas}{\end{eqnarray*}}
 \newcommand{\beqs}{\begin{equation*}}
 \newcommand{\eeqs}{\end{equation*}}
 \newcommand{\bi}{\begin{itemize}}
 \newcommand{\ei}{\end{itemize}}
 \newcommand{\ben}{\begin{enumerate}}
 \newcommand{\een}{\end{enumerate}}
 \newcommand{\ba}{\begin{array}}
 \newcommand{\ea}{\end{array}}
 \newcommand{\ds}{\displaystyle}
 \newcommand{\R}{\mathbb R}
\newcommand{\NN}{\mathbb N}
\newcommand{\CC}{\mathbb C}
\newcommand{\RR}{\mathbb R}
\newcommand{\ZZ}{\mathbb Z}
\newcommand{\SSS}{\mathcal S}
\newcommand{\sss}{\mathfrak s}
\begin{document}

\title[The Weyl Pseudo-differential Operators with Radial Symbols]{$G$-type Spaces of Ultradistributions over $\mathbb{R}^d_+$ and the Weyl Pseudo-differential Operators with Radial Symbols}

\thanks{The paper was
supported by the project {\it Modelling and harmonic analysis
methods and PDEs with singularities}, No. 174024 financed by the
Ministry of Science, Republic of Serbia.}


\author[S. Jak\v si\'c]{Smiljana Jak\v si\'c}
\address{Smiljana Jak\v si\'c, Faculty of Forestry, University of Belgrade, Kneza Vi\v seslava 1, Belgrade, Serbia}
              \email{smiljana.jaksic@gmail.com}

\author[S. Pilipovi\'v]{Stevan Pilipovi\'c}
\address{Stevan Pilipovi\'c, Department of Mathematics, Faculty of Sciences, University of Novi Sad, Trg Dositeja Obradovi\'ca 4, Novi Sad, Serbia}
              \email{stevan.pilipovic@dmi.uns.ac.rs}

\author[B. Prangoski]{Bojan Prangoski}
\address{Bojan Prangoski, Faculty of Mechanical Engineering, University Ss. Cyril and Methodius, Karpos II bb, Skopje,
           Macedonia}
           \email{bprangoski@yahoo.com}


\begin{abstract}
The first part of the paper is devoted to the $G$-type spaces i.e.
the spaces $G^\alpha_\alpha (\mathbb R^d_+)$, $\alpha\geq 1$ and
their duals which can be described as analogous to the
Gelfand-Shilov spaces and their duals but with completely new
justification of obtained results. The Laguerre type expansions of
the elements in $G^\alpha_\alpha(\mathbb R^d_+)$, $\alpha\geq 1$
and their duals characterise these spaces through the exponential
and sub-exponential growth of coefficients. We provide the full
topological description and by the nuclearity of
$G_\alpha^\alpha(\mathbb{R}^d_+)$, $\alpha\geq 1$ the kernel
theorem is proved. The second part is devoted to the class of the
Weyl operators with radial symbols belonging to the $G$-type
spaces. The continuity properties of this class of
pseudo-differential operators  over the Gelfand-Shilov type spaces
and their duals are proved. In this way the class of the Weyl
pseudo-differential operators is extended to the one with the
radial symbols with the exponential and sub-exponential growth
rate.

%

\end{abstract}

\maketitle
 \section{Introduction}

The aim of this paper is twofold. In the first part, we study the
spaces $G^\alpha_\alpha(\R^d_+)$, $\alpha\geq 1$ and their strong
duals, analogous to the Gelfand-Shilov spaces and their duals,
while in the second part, we study a class of the Weyl operators
with rotationally invariant symbols using the results from the
first part.

The Gelfand-Shilov spaces $S_\alpha(\mathbb{R}^d)$,
$S^\beta(\mathbb{R}^d)$ and $S_\alpha^\beta(\mathbb{R}^d)$,
$\alpha+\beta\geq 1$ (referred to as $S$-type spaces) are very
well known and used in the analysis of Cauchy problems, spectral
analysis, time-frequency analysis and many other fields of
mathematics (see \cite{Kaminski}, \cite{GS1}-\cite{GPR2},
\cite{NR}). The spaces $S_\alpha^\alpha(\mathbb{R}^d)$,
$\alpha\geq 1/2$ are of the special interest because they are
invariant under Fourier transform and have been characterised
through the Hermite expansions and the corresponding estimates of
coefficients (see \cite{A}, \cite{M}, \cite{SP}).

On the other hand, the corresponding function spaces defined on
$\mathbb{R}_+^d$ are less studied, although they should have
similar importance. Such spaces, $G_\alpha(\mathbb{R}^d_+)$,
$G^\beta(\mathbb{R}^d_+)$ and $G_\alpha^\beta(\mathbb{R}^d_+)$,
$\alpha+\beta\geq2$ (referred to as $G$-type spaces) in the one
dimensional case $d=1$, were introduced by A. Duran in \cite {D2}
in order to extend the Hankel-Clifford transform, analogous to the
Fourier transform for the positive real line, to a class of
functionals larger than that of tempered distributions on the
positive real line.

In the first part, we are interested in the spaces
$G_\alpha^\alpha(\mathbb{R}^d_+)$, $\alpha\geq 1$,  invariant
under the Hankel-Clifford transform. In the case $d=1$ these
spaces have been characterised through the Laguerre expansions and
the corresponding estimates of the coefficients in \cite{D1}. Our
investigations are connected with the papers of A. Duran
\cite{D4}-\cite{D1} in the case $d=1.$ These papers contain a lot
of fine results but, however, there exist subtle gaps which we
improved upon. Let us briefly present our investigations. In
Section 3, following \cite{D2}, we introduce the spaces
$G^{\beta}_{\alpha}(\RR^d_+)$, $\alpha,\beta\geq 0$. In Section 4,
we improve upon the gaps in \cite{D3} concerning the
Hankel-Clifford transform as a continuous mapping from $\mathcal
S(\R_+)$ into the same space, defining this transform for the
$d$-dimensional case. Moreover, we introduce, as an important
novelty of the paper, the modified fractional powers of the
partial Hankel-Clifford transform as a main tool for an
examination of the $G$-type spaces. In Section 5, the expansion of
elements from $G_\alpha^\alpha(\mathbb{R}^d_+)$, $\alpha\geq 1$,
with respect to the Laguerre orthonormal basis, is presented. We
characterise these spaces through the coefficient estimates. The
main corrections of gaps in \cite{D1}, related to the analytic
function $F(w)$, $w\in\mathbf{D}$, as well as the equivalence of
the conditions of Proposition 5.4, are done. We underline, as an
important novelty of our paper, the topological structure
described in Section 6, since the explanation of this structure in
the case $d=1$ given in \cite{D1} is inadequate. This is
essentially improved in the multi-dimensional case (Theorems 6.1,
6.2) by the closed graph De Wilde theorem. Moreover, as a main
consequence of the analysed topological structure, we prove in
Section 6 the nuclearity of the spaces $G^\alpha_\alpha(\R^d_+)$,
$\alpha\geq 1$ as well as the Schwartz's kernel theorem,
$$G_\alpha^\alpha(\mathbb{R}^{d_1}_+)\hat{\otimes}
G_\alpha^\alpha(\mathbb{R}^{d_2}_+)\cong
G_\alpha^\alpha(\mathbb{R}^{d_1+d_2}_+), \alpha\geq 1.$$

In the second part, we use the obtained series expansions in order
to introduce a new class of pseudo-differential operators with
radial symbols and prove continuity properties of such operators
on the Gelfand- Shilov spaces and their duals. More precisely, we
prove the continuity of the Weyl pseudo-differential operators
with radial symbols from the spaces
$G_{2\alpha}^{2\alpha}(\mathbb{R}^d_+)$ and
$(G_{2\alpha}^{2\alpha}(\mathbb{R}^d_+))'$, $\alpha\geq 1/2$. In
the first case, we show that the class of the Weyl
pseudo-differential operators with radial symbols is a continuous
and linear mapping from $S_\alpha^\alpha(\mathbb{R}^d)$ into
$S_\alpha^\alpha(\mathbb{R}^d)$ which can be extended to a
continuous and linear mapping from
$(S_\alpha^\alpha(\mathbb{R}^d))'$ into
$S_\alpha^\alpha(\mathbb{R}^d)$, while in the second case of the
symbol, we show that the class of the Weyl pseudo-differential
operator is a continuous and linear mapping from
$S_\alpha^\alpha(\mathbb{R}^d)$ into
$S_\alpha^\alpha(\mathbb{R}^d)$ which can be extended to a
continuous and linear mapping from
$(S_\alpha^\alpha(\mathbb{R}^d))'$ into
$(S_\alpha^\alpha(\mathbb{R}^d))'$. This second case is especially
important since we have symbols in the dual spaces as well as the
corresponding mapping over the duals of the Gelfand-Shilov spaces.

As a consequence, in Section 7, we give the corresponding results
related to the symbols in $\mathcal S(\R^d_+)$ and its dual and
the corresponding continuous linear mappings related to  the
Schwartz space $\mathcal S(\R^d)$ and its dual. With these special
cases, we extend the corresponding results of M. W. Wong
\cite[Chapter 24]{Wong}.

 \section{Preliminaries}

We denote by $\NN$, $\ZZ$, $\RR$ and $\CC$ the sets of positive
integers, integers, real and complex numbers, respectively;
$\NN_0=\NN \cup\{0\}$, $\RR_+=(0,\infty)$. The symbol $\RR^d_+$
stands for $(0,\infty)^d$ and $\overline{\RR^d_+}$ for its
closure, i.e. $[0,\infty)^d$. We use the standard multi-index
notation. We denote by $\mathbf{1}=(1,\ldots,1)\in\NN^d$. Thus,
for $z\in\CC^d$, $z^{\mathbf{1}}$ stands for $z_1\cdot\ldots\cdot
z_d$. Moreover, for $x,t\in\overline{\RR^d_+}$,
$x^t=x_1^{t_1}\cdot\ldots \cdot x_d^{t_d}$; in this case we use
the convention $0^0=1$. For $n\in\NN^d_0$, $D^n$ stands for
$\partial^n/\partial t_1^{n_1}\ldots
\partial t_d^{n_d}$. We use $\|\cdot\|_2$ for the norm in the Banach space (abbreviated as a $(B)$-space)
$L^2(\RR^d_+)$.\\
\indent We denote by $\sss$ the space of all complex sequences
$\{a_n\}_{n\in\NN^d_0}$ such that for each $j\in\NN$,
$\ds\sup_{n\in\NN^d_0}|a_n|(|n|+1)^j<\infty$. With these
seminorms, $\sss$ becomes a nuclear Fr\'{e}chet space; from now on
abbreviated as an $(FN)$-space (see \cite[p. 527]{Tr}; clearly in
the definition of $\sss$, we can take the $l^{p}$-norms, $p\geq 1$
instead of the $\mathrm{sup}$-norm). Its strong dual, which we
denote by $\sss'$, consists of all complex valued sequences
$\{a_n\}_{n\in\NN^d_0}$ such that $\ds\sup_{n\in\NN^d_0}
|a_n|(|n|+1)^{-j}<\infty$ for
some $j\in\NN$ (which depends on $\{a_n\}_{n\in\NN^d}$). It is a $(DFN)$-space.\\
\indent Let $\alpha\geq 1$ and $a>1$. We define $\sss^{\alpha,a}$
to be the space of all complex sequences $\{a_n\}_{n\in\NN^d_0}$
for which $\ds \|\{a_n\}_{n\in\NN^d_0}\|_{\sss^{\alpha,a}}=
\sup_{n\in\NN^d_0}|a_n|a^{|n|^{1/\alpha}}<\infty$. With this norm
$\sss^{\alpha,a}$ becomes a $(B)$-space. For $a>b>1$,
$\sss^{\alpha,a}$ is continuously injected into $\sss^{\alpha,b}$.
As a locally convex space (abbreviated as an l.c.s.) we
define $\ds \sss^{\alpha}=\lim_{\substack{\longrightarrow\\
a\rightarrow 1^+}}\sss^{\alpha,a}$; the inductive limit is indeed
a (Hausdorff) l.c.s. since $\sss^{\alpha,a}$ are continuously
injected into $\sss$.

\begin{prop}\label{lkrrr}
For $a>b>1$, the canonical inclusion $\sss^{\alpha,a}\rightarrow
\sss^{\alpha,b}$ is nuclear. In particular, $\sss^{\alpha}$ is a
nuclear $(DFS)$-space (i.e. a $(DFN)$-space) and its strong dual
$(\sss^{\alpha})'$ is an $(FN)$-space.
\end{prop}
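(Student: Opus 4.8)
The plan is to reduce everything to the nuclearity of the canonical inclusion $\iota\colon\sss^{\alpha,a}\to\sss^{\alpha,b}$, $a>b>1$, and then to invoke standard locally convex theory. First I would observe that $\iota$ is essentially a diagonal operator. Denote by $e_n$, $n\in\NN^d_0$, the sequence with $1$ in the $n$-th slot and $0$ elsewhere, and by $\delta_n$ the coordinate functional $(c_m)_m\mapsto c_n$. One computes directly that $\|e_n\|_{\sss^{\alpha,b}}=b^{|n|^{1/\alpha}}$ and $\|\delta_n\|_{(\sss^{\alpha,a})'}=a^{-|n|^{1/\alpha}}$. Setting $\phi_n=a^{|n|^{1/\alpha}}\delta_n$ and $z_n=b^{-|n|^{1/\alpha}}e_n$, so that $\|\phi_n\|_{(\sss^{\alpha,a})'}=\|z_n\|_{\sss^{\alpha,b}}=1$, and $\mu_n=(b/a)^{|n|^{1/\alpha}}$, I would check that $\iota=\sum_{n\in\NN^d_0}\mu_n\,\phi_n\otimes z_n$ with convergence in operator norm (the tail being a diagonal operator of norm $\sup_{|n|>N}(b/a)^{|n|^{1/\alpha}}\to0$). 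This exhibits $\iota$ as a nuclear operator, provided $\sum_n\mu_n<\infty$.

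The crux is therefore the summability
$$\sum_{n\in\NN^d_0}\Big(\frac{b}{a}\Big)^{|n|^{1/\alpha}}=\sum_{k=0}^{\infty}\binom{k+d-1}{d-1}\Big(\frac{b}{a}\Big)^{k^{1/\alpha}},$$
which I would establish by noting that the number of $n\in\NN^d_0$ with $|n|=k$ equals the polynomial $\binom{k+d-1}{d-1}=O(k^{d-1})$, while $(b/a)^{k^{1/\alpha}}=e^{-c\,k^{1/\alpha}}$ with $c=\log(a/b)>0$ decays sub-exponentially. Since $k^{1/\alpha}/\log k\to\infty$ (as $\alpha\geq1$), the exponential factor dominates any power of $k$, so the series converges. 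This elementary estimate is the only quantitative point of the argument; its sole subtlety is the lattice-point bookkeeping, so I do not expect it to be a genuine obstacle.

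Finally, I would upgrade the statement about $\iota$ to the assertions about $\sss^{\alpha}$. Passing to the cofinal sequence $a_k=1+1/k\downarrow1$ realises $\sss^{\alpha}=\varinjlim_k\sss^{\alpha,a_k}$ as a countable inductive limit of $(B)$-spaces whose linking maps are nuclear by the first part; hence $\sss^{\alpha}$ is a $(DFN)$-space, in particular a nuclear $(DFS)$-space. The remaining claim that $(\sss^{\alpha})'$ is an $(FN)$-space is then immediate from the general duality theory, the strong dual of a $(DFN)$-space being an $(FN)$-space. The only care needed here is to note that the limit is regular, so that the dual-space identifications apply, which is automatic since nuclear linking maps are compact and a countable inductive limit of $(B)$-spaces with compact linking maps is a $(DFS)$-space.
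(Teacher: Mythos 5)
Your proof is correct, and it reaches the nuclearity of the inclusion by a more direct route than the paper. The paper does not write down a nuclear representation at all: it proves only that each inclusion $\sss^{\alpha,a}\rightarrow\sss^{\alpha,b}$ is \emph{quasi-nuclear}, via the functionals $e_m(\{a_n\}_n)=a_m b^{|m|^{1/\alpha}}$ with $\sum_m\|e_m\|_{(\sss^{\alpha,a})'}\leq\sum_m (b/a)^{|m|^{1/\alpha}}<\infty$ and the pointwise bound $\|x\|_{\sss^{\alpha,b}}\leq\sum_m|\langle e_m,x\rangle|$, and then upgrades to nuclearity by factoring the inclusion through an intermediate $\sss^{\alpha,c}$ ($a>c>b$) and invoking Pietsch's theorem that the composition of two quasi-nuclear maps is nuclear. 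You instead exhibit the inclusion outright as a diagonal nuclear operator $\sum_n\mu_n\,\phi_n\otimes z_n$ with normalized $\phi_n$, $z_n$ and $\sum_n\mu_n<\infty$, verifying that the series actually converges to $\iota$ in $\sss^{\alpha,b}$. The quantitative heart is identical in both arguments, namely $\sum_{n\in\NN^d_0}(b/a)^{|n|^{1/\alpha}}<\infty$, which you justify explicitly with the lattice-point count $\binom{k+d-1}{d-1}=O(k^{d-1})$ against the sub-exponential decay $e^{-ck^{1/\alpha}}$ (the paper leaves this implicit). What your version buys is self-containedness: no appeal to the quasi-nuclear composition theorem, at the modest cost of checking convergence of the representing series, which you do correctly. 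The concluding passage to the $(DFN)$/$(FN)$ statements via a cofinal countable sequence of parameters and duality matches the paper's (equally terse) conclusion.
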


\begin{proof} Since the canonical inclusion $\sss^{\alpha,a}\rightarrow \sss^{\alpha,b}$ is a composition of two inclusions of
the same type it is enough to prove that it is quasi-nuclear (for
the definition of a quasi-nuclear mapping see \cite[Definition
3.2.3., p. 56]{pietsch} and for the fact that the composition of
two quasi-nuclear mappings is nuclear see \cite[Theorem 3.3.2., p.
62]{pietsch}). For each $m\in\NN^d_0$, we define
$e_m\in(\sss^{\alpha,a})'$ by $\langle
e_m,\{a_n\}_{n\in\NN^d_0}\rangle= a_m b^{|m|^{1/\alpha}}$. One
easily verifies that $\|e_m\|_{(\sss^{\alpha,a})'}\leq
(b/a)^{|m|^{1/\alpha}}$, hence $\sum_{m\in\NN^d_0}
\|e_m\|_{(\sss^{\alpha,a})'}<\infty$. For
$\{a_n\}_{n\in\NN^d_0}\in\sss^{\alpha,a}$ we have \beas
\|\{a_n\}_{n\in\NN^d_0}\|_{\sss^{\alpha,b}}\leq
\sum_{m\in\NN^d_0}|a_m|b^{|m|^{1/\alpha}}=
\sum_{m\in\NN^d_0}|\langle e_m,\{a_n\}_{n\in\NN^d_0}\rangle|,
\eeas i.e. the canonical inclusion $\sss^{\alpha,a}\rightarrow
\sss^{\alpha,b}$ is quasi nuclear.\qed
\end{proof}

For the moment, denote by $\tilde{\sss}^{\alpha}$ the space of all
complex valued sequences $\{b_n\}_{n\in\NN^d_0}$ such that for
each $a>1$,
$\|\{b_n\}_{n\in\NN^d_0}\|_{\tilde{\sss}^{\alpha},a}=\sum_{n\in\NN^d_0}
|b_n|a^{-|n|^{1/\alpha}}<\infty$. With these seminorms
$\tilde{\sss}^{\alpha}$ becomes an $(F)$-space. Denote by $\Xi$
the mapping $\tilde{\sss}^{\alpha} \rightarrow (\sss^{\alpha})'$,
$\langle \Xi(\{b_n\}_n),\{a_n\}_n\rangle=\sum_n a_n b_n$. One
easily verifies that it is a well defined bijection. Let
$B\subseteq \tilde{\sss}^{\alpha}$ be bounded. If $B_1\subseteq
\sss^{\alpha}$ is bounded, there exists $a>1$ such that
$B_1\subseteq \sss^{\alpha,a}$ and it is bounded there
($\sss^{\alpha}$ is a $(DFN)$-space). Now one easily verifies that
$\ds \sup_{\{b_n\}_n\in B,\, \{a_n\}_n\in B_1}|\langle
\Xi(\{b_n\}_n),\{a_n\}_n\rangle|<\infty$, i.e. $\Xi$ maps bounded
sets into bounded. Since $\tilde{\sss}^{\alpha}$ and
$(\sss^{\alpha})'$ are an $(F)$-spaces, $\Xi$ is continuous and
now the open mapping theorem verifies that $\Xi$ is an
isomorphism. Hence, we proved the following result.

\begin{prop}
The strong dual $(\sss^{\alpha})'$ of $\sss^{\alpha}$ is an
$(FN)$-space of all complex valued sequences
$\{b_n\}_{n\in\NN^d_0}$ such that, for each $a>1$,
$\|\{b_n\}_{n\in\NN^d_0}\|_{(s^{\alpha})',a}=\sum_{n\in\NN^d_0}
|b_n|a^{-|n|^{1/\alpha}}<\infty$. Its topology is generated by the
system of seminorms $\|\cdot\|_{(\sss^{\alpha})',a}$.
\end{prop}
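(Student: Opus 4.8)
The plan is to combine the abstract topological information from Proposition~\ref{lkrrr} with an explicit computation of the functionals on the inductive limit. By Proposition~\ref{lkrrr} the space $\sss^{\alpha}$ is a nuclear $(DFS)$-space, and since the strong dual of a $(DFN)$-space is an $(FN)$-space, this already settles the structural assertion and reduces everything to identifying the underlying set and the generating seminorms. I would realise $\sss^{\alpha}$ concretely as $\sss^{\alpha}=\bigcup_{a>1}\sss^{\alpha,a}$ and use that, $\sss^{\alpha}$ being a regular $(DFS)$-space, a linear functional $u$ on $\sss^{\alpha}$ is continuous precisely when its restriction to each step $\sss^{\alpha,a}$ is bounded, and that the closed unit balls $B_a$ of the Banach spaces $\sss^{\alpha,a}$ form a fundamental system of bounded subsets of $\sss^{\alpha}$.

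Next I would pin down the functionals. For $u\in(\sss^{\alpha})'$ set $b_m=\langle u,e_m\rangle$, where $e_m$ is the sequence with $1$ in the $m$-th slot and $0$ elsewhere (note $e_m\in\sss^{\alpha,a}$ for every $a>1$). Testing the boundedness estimate $|\langle u,\{c_n\}\rangle|\leq C_a\sup_n|c_n|a^{|n|^{1/\alpha}}$ against the finitely supported sequences $c_n=\overline{\operatorname{sgn}(b_n)}\,a^{-|n|^{1/\alpha}}$ (supported on an arbitrary finite set) yields $\sum_n|b_n|a^{-|n|^{1/\alpha}}\leq C_a<\infty$ for every $a>1$, so $\{b_n\}$ satisfies the stated summability condition. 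Conversely, any such sequence defines a functional through $\langle u,\{c_n\}\rangle=\sum_n c_n b_n$, which is bounded on each $\sss^{\alpha,a}$ by the same estimate. To see that $u$ is \emph{determined} by $\{b_n\}$ I would prove that the finitely supported sequences are dense in $\sss^{\alpha}$: given $\{c_n\}\in\sss^{\alpha,a}$ and any $b$ with $1<b<a$, the truncations converge to $\{c_n\}$ in the $\sss^{\alpha,b}$-norm because $\sup_{|n|>N}|c_n|b^{|n|^{1/\alpha}}\leq\|\{c_n\}\|_{\sss^{\alpha,a}}\sup_{|n|>N}(b/a)^{|n|^{1/\alpha}}\to 0$; hence they converge in $\sss^{\alpha}$, and two continuous functionals agreeing on all finite sequences coincide.

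Finally I would match the two topologies. The strong topology is that of uniform convergence on the bounded sets $B_a$, so it is generated by the seminorms $u\mapsto\sup_{\{c_n\}\in B_a}|\langle u,\{c_n\}\rangle|$. A direct estimate gives $\sup_{\{c_n\}\in B_a}|\sum_n c_n b_n|=\sum_n|b_n|a^{-|n|^{1/\alpha}}$, the upper bound being immediate and the reverse being attained in the limit along the sequences $c_n=\overline{\operatorname{sgn}(b_n)}\,a^{-|n|^{1/\alpha}}$ used above; thus these generating seminorms are exactly the $\|\cdot\|_{(\sss^{\alpha})',a}$. The main obstacle is the uniqueness/density step: since each $\sss^{\alpha,a}$ is a weighted $\ell^{\infty}$-type space, the finitely supported sequences are \emph{not} dense in the individual Banach steps, and it is only the inductive-limit structure (passing from weight $a$ to a strictly smaller weight $b$) that restores density and thereby guarantees that the map $\Xi$ introduced before the statement is a genuine bijection. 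Everything else is a routine application of the regularity of $(DFS)$-spaces together with Proposition~\ref{lkrrr}.
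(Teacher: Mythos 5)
Your argument is correct and complete, but it reaches the identification of $(\sss^{\alpha})'$ by a genuinely different route than the paper. The paper introduces the candidate space $\tilde{\sss}^{\alpha}$ of sequences with $\sum_{n}|b_n|a^{-|n|^{1/\alpha}}<\infty$ for all $a>1$ as an $(F)$-space, asserts (without proof) that the pairing $\Xi(\{b_n\})(\{a_n\})=\sum_n a_nb_n$ is a well-defined bijection onto $(\sss^{\alpha})'$, shows that $\Xi$ maps bounded sets to bounded sets using the regularity of the $(DFS)$-limit, deduces continuity because both spaces are $(F)$-spaces, and then invokes the open mapping theorem to upgrade the continuous bijection to a topological isomorphism. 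You instead work entirely on the dual side: you recover the coefficients $b_m=\langle u,e_m\rangle$, obtain the summability condition by testing against finitely supported sequences $c_n=\overline{\operatorname{sgn}(b_n)}\,a^{-|n|^{1/\alpha}}$, prove that $u$ is determined by $\{b_n\}$ via density of the finitely supported sequences in $\sss^{\alpha}$, and then compute the strong-dual seminorms directly as $\sup_{B_a}|\langle u,\cdot\rangle|=\sum_n|b_n|a^{-|n|^{1/\alpha}}$, using that the unit balls $B_a$ of the steps form a fundamental system of bounded sets. What your approach buys is twofold: it avoids the open mapping theorem in favour of an explicit weighted $\ell^{\infty}$--$\ell^{1}$ duality computation, and it supplies the verification of bijectivity that the paper dismisses as ``easily verified'' --- in particular your observation that finitely supported sequences are \emph{not} dense in the individual Banach steps and that density is restored only by dropping to a strictly smaller weight inside the inductive limit is exactly the point that makes the paper's unproved claim true. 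What the paper's approach buys is brevity: once boundedness of $\Xi$ is checked, the abstract machinery of $(F)$-spaces does the rest. Both arguments ultimately rest on the same two facts, namely regularity of the $(DFS)$-limit and the elementary duality between the weighted sup-norm and the weighted $\ell^{1}$-norm.
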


Let $\alpha\geq 1/2$. For $A>0$, denote by
$\SSS^{\alpha,A}_{\alpha,A}(\RR^d)$ a $(B)$-space of all
$\varphi\in\mathcal{C}^{\infty}(\RR^d)$ with the norm
$\ds\sup_{n,m\in\NN^d_0}\left\|x^m
D^n\varphi(x)\right\|_{L^2(\RR^d)}/
(A^{|n|+|m|}n!^{\alpha}m!^{\alpha})<\infty$. The Gelfand-Shilov
space $\SSS^{\alpha}_{\alpha}(\RR^d)$ is defined to be the space
$\ds\lim_{\substack{\longrightarrow\\ A\rightarrow \infty}}
\SSS^{\alpha,A}_{\alpha,A}(\RR^d)$. One easily verifies that for
$A_1<A_2$ the canonical inclusion
$\SSS^{\alpha,A_1}_{\alpha,A_1}(\RR^d)\rightarrow
\SSS^{\alpha,A_2}_{\alpha,A_2}(\RR^d)$ is a compact mapping, i.e.
$\SSS^{\alpha}_{\alpha}(\RR^d)$ is a $(DFS)$-space (for the
properties of
$\SSS^{\alpha}_{\alpha}(\RR^d)$ we refer to \cite[Chapter 6]{NR}; see also \cite{GS1}, \cite{GPR2}).\\
\indent The Hermite polynomials and the corresponding Hermite
functions are given by
$$H_j(t)=(-1)^je^{t^2}\frac{d^j}{dt^j}(e^{-t^2}),\;t\in\mathbb{R},\;
h_j(t)=(2^j
j!\sqrt{\pi})^{-1/2}e^{-t^2/2}H_j(t),\;t\in\mathbb{R},\;j\in\mathbb{N}_0.$$
For $n\in\NN^d_0$, put $h_n(x)=h_{n_1}(x_1)\cdot\ldots\cdot
h_{n_d}(x_d)$. Then $\{h_n\}_{n\in\NN^d_0}$ is an orthonormal
basis for $L^2(\mathbb{R}^d)$. For each $n\in\NN^d_0$,
$h_n\in\SSS^{1/2}_{1/2}(\RR^d)$. Moreover, for $\alpha\geq 1/2$,
$S_\alpha^\alpha(\mathbb{R}^d)$ is  given through the Hermite
expansions. In fact, we have the following result for which the
proof is similar to the proof of \cite[Theorem 3.4 and Corollary
3.5]{lan} and we omit it.

\begin{prop}\label{herexpult}
Let $\alpha\geq 1/2$. The mapping
$\SSS^{\alpha}_{\alpha}(\RR^d)\rightarrow \sss^{2\alpha}$,
$f\mapsto \{\langle f,h_n\rangle\}_{n\in\NN^d_0}$, is a
topological isomorphism. For $f\in\SSS^{\alpha}_{\alpha}(\RR^d)$,
$\sum_{n\in\NN^d_0}\langle f,h_n\rangle h_n$ converges absolutely to $f$ in $\SSS^{\alpha}_{\alpha}(\RR^d)$.\\
\indent The mapping $(\SSS^{\alpha}_{\alpha}(\RR^d))'\rightarrow
(\sss^{2\alpha})'$, $T\mapsto \{\langle
T,h_n\rangle\}_{n\in\NN^d_0}$, is a topological isomorphism. For
$T\in(\SSS^{\alpha}_{\alpha}(\RR^d))'$, $\sum_{n\in\NN^d_0}\langle
T,h_n\rangle h_n$ converges absolutely to $T$ in
$(\SSS^{\alpha}_{\alpha}(\RR^d))'$.
\end{prop}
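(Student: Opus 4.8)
The plan is to characterise $\SSS^\alpha_\alpha(\RR^d)$ through the harmonic oscillator $H=-\Delta+|x|^2$, for which the Hermite functions are eigenfunctions, $Hh_n=(2|n|+d)h_n$, and which is symmetric on the rapidly decreasing smooth functions. The first step is an auxiliary equivalence: $f\in\SSS^\alpha_\alpha(\RR^d)$ if and only if there are $C,L>0$ with $\|H^kf\|_2\le CL^kk!^{2\alpha}$ for all $k\in\NN_0$. One direction follows by expanding $H^k=\big(\sum_j(-\partial_j^2+x_j^2)\big)^k$ into monomials $x^{2a}D^{2b}$ (modulo lower-order commutator terms), applying the defining seminorms, and collecting factorials via $(2a)!^\alpha(2b)!^\alpha\le 4^{2\alpha k}k!^{2\alpha}$ for $|a|+|b|\le k$; the converse holds because $x_j^2$ and $-\partial_j^2$ are dominated by $H$, so each $\|x^mD^nf\|_2$ is controlled by $\|H^{(|m|+|n|)/2}f\|_2$ together with lower-order corrections.

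Granting this, the forward estimate runs as follows. Since $\|h_n\|_2=1$ and the integration by parts $\langle f,H^kh_n\rangle=\langle H^kf,h_n\rangle$ is justified for $f\in\SSS^\alpha_\alpha(\RR^d)$, we get
\[
|\langle f,h_n\rangle|=(2|n|+d)^{-k}|\langle H^kf,h_n\rangle|\le C(2|n|+d)^{-k}L^kk!^{2\alpha}
\]
for every $k$. Optimising over $k$ (by Stirling, the minimum of $r^{-k}k!^{2\alpha}$ is of order $\exp(-c\,r^{1/(2\alpha)})$) yields $|\langle f,h_n\rangle|\le C'a^{-|n|^{1/(2\alpha)}}$ for a suitable $a>1$, so $\{\langle f,h_n\rangle\}_n\in\sss^{2\alpha}$; the continuity of $\SSS^\alpha_\alpha(\RR^d)\to\sss^{2\alpha}$ is read off from the dependence of $a$ on $L$.

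For the reverse direction I would use the ladder operators $\mathfrak a_j=(x_j+\partial_j)/\sqrt2$, $\mathfrak a_j^\dagger=(x_j-\partial_j)/\sqrt2$, which shift a single index by one and introduce factors $\sqrt{n_j}$, $\sqrt{n_j+1}$. Writing $x^mD^n$ as a polynomial in the $\mathfrak a_j,\mathfrak a_j^\dagger$ gives $\|x^mD^nh_k\|_2\le B^{|m|+|n|}(|k|+|m|+|n|+1)^{(|m|+|n|)/2}$, the extra indices being absorbed into the constant. Hence for $\{a_n\}_n\in\sss^{2\alpha}$ with $|a_n|\le Ma^{-|n|^{1/(2\alpha)}}$ the series $f=\sum_na_nh_n$ satisfies $\|x^mD^nf\|_2\le MB^{|m|+|n|}\sum_ka^{-|k|^{1/(2\alpha)}}(|k|+|m|+|n|+1)^{(|m|+|n|)/2}$; retaining half the exponential decay to converge the sum and maximising the remainder over $k$ produces, again by Stirling, a bound $C''A^{|m|+|n|}m!^\alpha n!^\alpha$. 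This places $f$ in $\SSS^\alpha_\alpha(\RR^d)$, gives continuity of the inverse, and—applied to the tails of the series—the absolute convergence of $\sum_n\langle f,h_n\rangle h_n$ in $\SSS^\alpha_\alpha(\RR^d)$; the two-sided estimates make the map a topological isomorphism.

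The second statement I would obtain by transposition. The first part exhibits a topological isomorphism between the $(DFN)$-spaces $\SSS^\alpha_\alpha(\RR^d)$ and $\sss^{2\alpha}$, so its transpose is a topological isomorphism of the strong duals, which by the preceding proposition are the $(FN)$-spaces described there; identifying the transpose with $T\mapsto\{\langle T,h_n\rangle\}_n$ and $(\sss^{2\alpha})'$ with the corresponding sequence space yields the claim, while the absolute convergence of $\sum_n\langle T,h_n\rangle h_n$ follows by pairing the coefficient bounds defining $(\sss^{2\alpha})'$ against the controlled growth of the $\SSS^\alpha_\alpha(\RR^d)$-seminorms of $h_n$. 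The main obstacle is the sharp bookkeeping of factorials in both equivalences: converting $\|H^kf\|_2\le CL^kk!^{2\alpha}$ into the coefficient decay $a^{-|n|^{1/(2\alpha)}}$ and back, while keeping the exponent $1/(2\alpha)$ matched to the Gelfand--Shilov index $\alpha$, is where the estimates must be carried out with care.
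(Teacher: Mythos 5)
Your proposal is correct and follows essentially the route the paper itself intends: the authors do not reprove this statement but defer to \cite{lan}, and the argument there is precisely this Hermite-coefficient characterization --- the iterated oscillator bounds $\|H^kf\|_2\le CL^kk!^{2\alpha}$ yielding the decay $a^{-|n|^{1/(2\alpha)}}$ of the coefficients, the ladder-operator estimate for $\|x^mD^nh_k\|_2$ (where $\alpha\ge 1/2$ is used to absorb $(|m|+|n|+1)^{(|m|+|n|)/2}$ into $m!^{\alpha}n!^{\alpha}$) for the converse, and transposition between the $(DFN)$-space and its $(FN)$-dual for the second half. One small remark: only the forward half of your auxiliary equivalence is actually used, so the somewhat delicate claim that $\|x^mD^nf\|_2$ is dominated by $\|H^{(|m|+|n|)/2}f\|_2$ plus corrections can simply be omitted, since the ladder-operator step already closes the circle of implications.
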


For $j\in\mathbb{N}_0$ and $\gamma\geq 0$, the $j$-th Laguerre
polynomial of order $\gamma$ is defined by
$$L_j^\gamma(t)=\frac{t^{-\gamma}e^t}{j!}\frac{d^j}{dt^j}(e^{-t}t^{\gamma+j}),\;t \geq 0.$$
The $j$-th Laguerre function of order $\gamma$ is defined by
$\mathcal{L}^{\gamma}_j(t)=\left(j!/\Gamma(j+\gamma+1)\right)^{1/2}L_j^{\gamma}(t)e^{-t/2}$.
For $n\in\NN^d_0$ and $\gamma\in\overline{\RR^d_+}$,
$L^{\gamma}_n(t)=\prod_{l=1}^d L^{\gamma_l}_{n_l}(t_l)$ and
$\mathcal{L}^{\gamma}_n(t)=\prod_{l=1}^d\mathcal{L}^{\gamma_l}_{n_l}(t_l)$
are the $d$-dimensional Laguerre polynomials and Laguerre
functions of order $\gamma$, respectively.
In the case $\gamma=0$, we write $L_n$ and $\mathcal{L}_n$ instead of $L_n^0$ and $\mathcal{L}_n^0$, respectively.\\
\indent For $\gamma\in\overline{\RR^d_+}$ we denote by
$L^2(\RR^d_+,t^{\gamma}dt)$ a $(B)$-space of all measurable
functions on $\RR^d_+$ such that $\int_{\RR^d_+}
|f(t)|^2t^{\gamma}dt<\infty$; its norm is defined by the square
root of the last quantity. Moreover,
$\{\mathcal{L}^{\gamma}_n\}_{n\in\NN^d_0}$ is an orthonormal basis for $L^2(\mathbb{R}^d_+,t^{\gamma}dt)$.\\
\indent Recall, (see \cite{Sm}) an $(F)$-space
$\SSS({\mathbb{R}^d_+})$ consists of all
$f\in\mathcal{C}^\infty(\mathbb{R}^d_+)$ such that all derivatives
$D^pf$, $p\in\NN^d_0$, extend to continuous functions on
$\overline{\RR^d_+}$ and
$\sup_{x\in\mathbb{R}^d_+}x^k|D^pf(x)|<\infty$, $\forall
k,p\in\mathbb{N}_0^d$. We denote by $(\SSS(\mathbb{R}_+^d))'$ its
strong dual.

In \cite{D4}, \cite{Zayed} and \cite{SP1} the expansions of the
functions from $\SSS({\mathbb{R}^d_+})$ with respect to the
Laguerre polynomials are studied. The $d$-dimensional case is
considered in \cite{Sm}. We state these results here and refer to
\cite{Sm} for their proofs.

\begin{thm}(\cite[Theorem 3.1]{Sm})
For $f\in\mathcal{S}(\mathbb{R}^d_+)$ let
$a_n(f)=\int_{\mathbb{R}^d_+} f(x)\mathcal{L}_n(x)dx$. Then
$f=\sum_{n\in\mathbb{N}_0^d}a_n(f)\mathcal{L}_n$ and the series
converges absolutely in $\mathcal{S}(\mathbb{R}^d_+)$. Moreover
the mapping $\iota:\mathcal{S}(\mathbb{R}^d_+)\rightarrow \sss$,
$\iota(f)=\{a_n(f)\}_{n\in\NN^d_0}$ is a topological isomorphism.
\end{thm}

\begin{thm}\label{razvoj u S+'}(\cite[Theorem 3.2]{Sm})
For $T\in\mathcal{S}'(\mathbb{R}^d_+)$ let $b_n(T)=\langle
T,\mathcal{L}_n\rangle$. Then $T=\sum_{n\in\mathbb{N}_0^d}b_n(T)$
$\mathcal{L}_n$ and $\{b_n(T)\}_{n\in\mathbb{N}_0^d}\in \sss'$ and
the series converges absolutely in $\mathcal{S}'(\mathbb{R}^d_+)$.
Conversely, if $\{b_n\}_{n\in\mathbb{N}_0^d}\in \sss'$ there
exists $T\in\mathcal{S}'(\mathbb{R}^d_+)$ such that
$T=\sum_{n\in\mathbb{N}_0^d}b_n\mathcal{L}_n$. As a consequence,
$\mathcal{S}'(\mathbb{R}^d_+)$ is topologically isomorphic to
$\sss'$.
\end{thm}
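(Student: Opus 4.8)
The plan is to obtain the statement as the dual counterpart of the preceding theorem (\cite[Theorem 3.1]{Sm}) via the transpose map. There the coefficient map $\iota:\mathcal{S}(\RR^d_+)\to\sss$, $\iota(f)=\{a_n(f)\}_n$ with $a_n(f)=\int_{\RR^d_+}f\mathcal{L}_n\,dx$, is shown to be a topological isomorphism. Since this is an isomorphism of locally convex spaces and both $\iota$ and $\iota^{-1}$ preserve bounded sets, the transpose ${}^{t}\iota:\sss'\to(\mathcal{S}(\RR^d_+))'$ is automatically a topological isomorphism for the strong dual topologies. The whole theorem will come from identifying ${}^{t}\iota$ and its inverse explicitly.

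First I would record that each Laguerre function $\mathcal{L}_n$ lies in $\mathcal{S}(\RR^d_+)$, so that $b_n(T)=\langle T,\mathcal{L}_n\rangle$ is well defined and $\kappa:T\mapsto\{b_n(T)\}_n$ is a linear map on $\mathcal{S}'(\RR^d_+)$. For $\{b_n\}_n\in\sss'$ the transpose acts by $\langle{}^{t}\iota(\{b_n\}),f\rangle=\langle\{b_n\},\iota(f)\rangle=\sum_n b_n a_n(f)=\sum_n b_n\int_{\RR^d_+}f\mathcal{L}_n\,dx$, so that ${}^{t}\iota(\{b_n\})$ is precisely the distribution $\sum_n b_n\mathcal{L}_n$. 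The orthonormality of $\{\mathcal{L}_n\}_n$ in $L^2(\RR^d_+)$ gives $\iota(\mathcal{L}_m)=\{\delta_{mn}\}_n$, whence $\kappa({}^{t}\iota(\{b_n\}))=\{b_n\}_n$; thus $\kappa=({}^{t}\iota)^{-1}$ is a topological isomorphism of $\mathcal{S}'(\RR^d_+)$ onto $\sss'$. In particular $\{b_n(T)\}_n\in\sss'$, the converse assertion is just the surjectivity of $\kappa$, and the final topological isomorphism $\mathcal{S}'(\RR^d_+)\cong\sss'$ follows.

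It remains to establish the convergence $T=\sum_n b_n(T)\mathcal{L}_n$ in $\mathcal{S}'(\RR^d_+)$. For the weak-$*$ statement I would use that by \cite[Theorem 3.1]{Sm} each $f$ has the expansion $f=\sum_n a_n(f)\mathcal{L}_n$ converging in $\mathcal{S}(\RR^d_+)$; applying the continuous functional $T$ term by term yields $\langle T,f\rangle=\sum_n a_n(f)b_n(T)=\langle\sum_n b_n(T)\mathcal{L}_n,f\rangle$. To upgrade this to strong (and absolute) convergence I would fix a bounded set $B\subseteq\mathcal{S}(\RR^d_+)$; then $\iota(B)$ is bounded in $\sss$, i.e. $\sup_{f\in B}|a_n(f)|(|n|+1)^{j}$ is finite for every $j$, while $\{b_n(T)\}_n\in\sss'$ grows at most polynomially. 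Estimating the tail $\sum_{|n|>N}|b_n(T)a_n(f)|$ with $j$ chosen large then shows it tends to $0$ uniformly for $f\in B$, which gives absolute convergence uniformly on bounded sets.

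The routine parts are the orthonormality computation and the identification of the transpose. The step that most deserves care is the passage from the weak-$*$ convergence, which is immediate, to strong and absolute convergence in $\mathcal{S}'(\RR^d_+)$, together with the general principle that the transpose of a topological isomorphism is again one for the strong topologies; both rest on the fact that $\iota$ carries bounded sets to coefficient families whose rapid decay is uniform over the bounded set, balanced against the at most polynomial growth of the coefficients of $T$.
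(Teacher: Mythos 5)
Your proposal is correct. The paper itself states this result without proof, citing \cite[Theorem 3.2]{Sm}; but your derivation --- passing to the transpose of the isomorphism $\iota:\SSS(\RR^d_+)\to\sss$ from the preceding theorem, identifying $({}^t\iota)^{-1}$ with the coefficient map $T\mapsto\{\langle T,\mathcal{L}_n\rangle\}_n$ via orthonormality of the $\mathcal{L}_n$, and then upgrading weak-$*$ to absolute strong convergence by balancing the uniform rapid decay of $\{a_n(f)\}_n$ over a bounded set against the polynomial growth of $\{b_n(T)\}_n$ --- is exactly the technique the paper employs for the parallel statement in Theorem \ref{D ocena koef za dual}, so it is essentially the intended argument.
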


Note that the topological isomorphisms between $\SSS(\RR^d_+)$ and
$\sss$ and between $(\SSS(\RR^d_+))'$ and $\sss'$ imply that
$\SSS(\RR^d_+)$ is an $(FN)$-space and $(\SSS(\RR^d_+))'$ is a
$(DFN)$-space. In particular, the $\pi$ and the $\epsilon$
topologies coincide on $\SSS(\RR^{d_1}_+)\otimes\SSS(\RR^{d_2}_+)$
and on $(\SSS(\RR^{d_1}_+))'\otimes (\SSS(\RR^{d_2}_+))'$.

\begin{thm}\label{kerth}(\cite[Theorem 4.2]{Sm})
The following canonical isomorphisms hold:
$$\mathcal{S}(\mathbb{R}^{d_1}_+)\hat{\otimes}\mathcal{S}(\mathbb{R}^{d_2}_+)\cong
\mathcal{S}(\mathbb{R}^{d_1+d_2}_+),\,\,\,
(\mathcal{S}(\mathbb{R}^{d_1}_+))'\hat{\otimes}(\mathcal{S}(\mathbb{R}^{d_2}_+))'\cong
(\mathcal{S}(\mathbb{R}^{d_1+d_2}_+))'.$$
\end{thm}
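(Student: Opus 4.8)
The plan is to reduce everything to the sequence-space level via the Laguerre isomorphisms already established. By Theorem 3.1 of \cite{Sm} the map $\iota$ furnishes topological isomorphisms $\SSS(\RR^{d_1}_+)\cong \sss$, $\SSS(\RR^{d_2}_+)\cong \sss$ and $\SSS(\RR^{d_1+d_2}_+)\cong \sss$, where in each case $\sss$ is built on the corresponding index set, which I will write as $\sss(\NN^{d_1}_0)$, $\sss(\NN^{d_2}_0)$, $\sss(\NN^{d_1+d_2}_0)$. Since the completed tensor product is functorial with respect to topological isomorphisms, the first assertion follows once one proves the purely sequence-theoretic identity
$$\sss(\NN^{d_1}_0)\,\hat{\otimes}\,\sss(\NN^{d_2}_0)\cong \sss(\NN^{d_1+d_2}_0).$$
Because $\sss$ is an $(FN)$-space, the $\pi$- and $\epsilon$-topologies coincide on the algebraic tensor product, as recorded in the remark preceding the statement, so there is no ambiguity in writing $\hat{\otimes}$.

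For the core identity I would identify $\NN^{d_1}_0\times \NN^{d_2}_0$ with $\NN^{d_1+d_2}_0$ and send the algebraic tensor product into the target by $\{a_m\}\otimes\{b_n\}\mapsto \{a_mb_n\}_{(m,n)}$; this has dense range since the finitely supported sequences are dense in $\sss(\NN^{d_1+d_2}_0)$. The key point is that the two relevant systems of weights are equivalent: from $|m|+|n|+1\leq (|m|+1)(|n|+1)\leq (|m|+|n|+1)^2$ one obtains, for every $j$,
$$(|m|+|n|+1)^j\leq (|m|+1)^j(|n|+1)^j\leq (|m|+|n|+1)^{2j}.$$
Hence the projective tensor seminorms $\sup_{m,n}|c_{m,n}|(|m|+1)^j(|n|+1)^j$ generate exactly the topology of $\sss(\NN^{d_1+d_2}_0)$, and the completion is topologically the space of all $\{c_{m,n}\}$ with $\sup_{m,n}|c_{m,n}|(|(m,n)|+1)^j<\infty$ for all $j$, that is, $\sss(\NN^{d_1+d_2}_0)$. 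This gives the first isomorphism and therefore $\SSS(\RR^{d_1}_+)\hat{\otimes}\SSS(\RR^{d_2}_+)\cong \SSS(\RR^{d_1+d_2}_+)$.

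For the dual assertion I would take strong duals in the isomorphism just proved. As $\SSS(\RR^{d_1}_+)$ and $\SSS(\RR^{d_2}_+)$ are $(FN)$-spaces, hence reflexive and nuclear, the strong dual of their completed tensor product is the completed tensor product of their strong duals; applying this and invoking $(\SSS(\RR^{d_i}_+))'\cong \sss'$ from Theorem \ref{razvoj u S+'} yields
$$(\SSS(\RR^{d_1}_+))'\hat{\otimes}(\SSS(\RR^{d_2}_+))'\cong (\SSS(\RR^{d_1+d_2}_+))'.$$
Alternatively one can argue directly on sequences, checking that $\sss'(\NN^{d_1}_0)\hat{\otimes}\sss'(\NN^{d_2}_0)$ consists of those $\{c_{m,n}\}$ for which $\sup_{m,n}|c_{m,n}|(|m|+1)^{-j}(|n|+1)^{-j}<\infty$ for some $j$, which by the same weight equivalence is $\sss'(\NN^{d_1+d_2}_0)$. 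I expect the main obstacle to be the clean identification of the abstract completed tensor product with a concrete sequence space, together with the assertion that strong duality commutes with $\hat{\otimes}$ in this $(FN)$/$(DFN)$ setting; the weight equivalence displayed above is precisely what renders both steps routine.
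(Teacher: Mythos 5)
Your proposal cannot be matched line-by-line against a proof in the paper, because the paper does not prove Theorem \ref{kerth} at all: it is quoted from \cite[Theorem 4.2]{Sm} and the reader is explicitly referred there for the proof. That said, your route --- transporting everything through the Laguerre-coefficient isomorphisms $\SSS(\RR^{d}_+)\cong\sss$ of Theorem 2.4 and Theorem \ref{razvoj u S+'} and reducing to the sequence identity $\sss(\NN^{d_1}_0)\hat{\otimes}\sss(\NN^{d_2}_0)\cong\sss(\NN^{d_1+d_2}_0)$ --- is exactly the strategy the authors themselves use for the analogous kernel theorem for $G^{\alpha}_{\alpha}(\RR^d_+)$ in Section 6, where the corresponding identity for $\sss^{\alpha}$ is established, so it is squarely in the spirit of the paper. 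One step of your sketch should be tightened: the sup-seminorms $\sup_{m,n}|c_{m,n}|(|m|+1)^{j}(|n|+1)^{j}$ are not literally the projective tensor seminorms, which are infima over representations. What your weight inequality gives directly is only one half, namely that each $\pi$-seminorm dominates the corresponding induced sup-seminorm, hence that the canonical map into $\sss(\NN^{d_1+d_2}_0)$ is $\pi$-continuous. For the reverse estimate you should bound the $\epsilon$-seminorms: an equicontinuous set in $\sss'$ sits inside a set of the form $\{u\,:\,\sum_{n}|u_n|(|n|+1)^{-j}\le C\}$, whence $|\langle u\otimes v,\{c_{m,n}\}\rangle|\le C^{2}\sup_{m,n}|c_{m,n}|(|(m,n)|+1)^{2j}$ by your weight inequality, and nuclearity ($\pi=\epsilon$ on the algebraic tensor product, which you already invoked) then closes the loop; this is precisely the pattern of the proof of the kernel theorem for the $G$-type spaces in Section 6. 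With that adjustment, together with the density of the finitely supported sequences, the first isomorphism is complete, and the dual statement follows either from your direct computation with the $\ell^{1}$-type norms describing $\sss'$ or from the fact that strong duality commutes with $\hat{\otimes}$ for nuclear $(F)$-spaces, which the authors likewise invoke in Section 6.
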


\begin{thm}\label{repofsupp}(\cite[Theorem 4.3]{Sm})
The restriction mapping $f\mapsto f_{|\RR^d_+}$, $\SSS(\RR^d)\rightarrow \SSS(\RR^d_+)$ is a topological homomorphism onto.\\
\indent The space $\SSS(\RR^d_+)$ is topologically isomorphic to
the quotient space $\SSS(\RR^d)/N$, where $N=\{f\in\SSS(\RR^d)|\,
\mathrm{supp}\, f\subseteq \RR^d\backslash \RR^d_+\}$.
Consequently, $(\SSS(\RR^d_+))'$ can be identified with the closed
subspace of $(\SSS(\RR^d))'$ which consists of all tempered
distributions with support in $\overline{\RR^d_+}$.
\end{thm}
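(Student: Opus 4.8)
The plan is to exhibit a continuous linear extension operator that splits the restriction map, and then to read off both the topological homomorphism property and the duality statement from this splitting.

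First I would check that the restriction map $R\colon\SSS(\RR^d)\to\SSS(\RR^d_+)$, $Rf=f_{|\RR^d_+}$, is continuous. This is immediate from the definitions: for $f\in\SSS(\RR^d)$ every derivative $D^pf$ is continuous on all of $\RR^d$, hence extends continuously to $\overline{\RR^d_+}$, and $\sup_{x\in\RR^d_+}|x^kD^pf(x)|\le\sup_{x\in\RR^d}|x^kD^pf(x)|$, so each seminorm of $Rf$ in $\SSS(\RR^d_+)$ is dominated by the corresponding seminorm of $f$.

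The crux is to construct a continuous linear operator $E\colon\SSS(\RR^d_+)\to\SSS(\RR^d)$ with $R\circ E=\id$. I would build $E$ by a Seeley-type construction applied successively in each coordinate direction. In one dimension, for $g\in\SSS(\RR_+)$ one sets, for $x<0$,
$$E_1g(x)=\sum_{k=0}^\infty a_k\,\varphi(b_k x)\,g(-b_k x),$$
where $\varphi\in C_c^\infty(\RR)$ equals $1$ near $0$, $b_k\to\infty$, and the $a_k$ are Seeley coefficients satisfying $\sum_k a_k(-b_k)^n=1$ for every $n\in\NN_0$ together with the summability $\sum_k|a_k|b_k^n<\infty$; the existence of such coefficients for a prescribed $(b_k)$ is the classical interpolation/Mittag--Leffler step. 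These conditions guarantee both that all one-sided derivatives of $E_1g$ match at $0$ (so $E_1g\in C^\infty(\RR)$) and that the weighted sup-norm estimates defining $\SSS(\RR)$ are preserved, so $E_1\colon\SSS(\RR_+)\to\SSS(\RR)$ is continuous linear with $(E_1g)_{|\RR_+}=g$. For the cube $\RR^d_+=(0,\infty)^d$ I would apply this one-variable extension in the variable $x_1$, then $x_2$, and so on, acting in each step on the remaining variables as parameters; since the coordinatewise extensions act on different variables they commute, which ensures that all mixed derivatives extend continuously across each coordinate hyperplane and at the corners, and the product of the one-dimensional estimates yields a continuous linear $E\colon\SSS(\RR^d_+)\to\SSS(\RR^d)$ with $RE=\id$.

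Given $E$, the conclusions follow formally. Since $RE=\id$, $R$ is surjective and, being a continuous linear surjection with a continuous linear right inverse, it is a topological homomorphism onto (the projection $ER$ is a continuous idempotent and $R$ restricts to an isomorphism of its range onto $\SSS(\RR^d_+)$). Its kernel is $\{f\in\SSS(\RR^d):f_{|\RR^d_+}=0\}$; because $\RR^d_+$ is open and $f$ is continuous, $f_{|\RR^d_+}=0$ is equivalent to $\supp f\subseteq\RR^d\backslash\RR^d_+$, so $\ker R=N$ exactly as defined, and the open mapping property gives the topological isomorphism $\SSS(\RR^d_+)\cong\SSS(\RR^d)/N$. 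Finally, $N=R^{-1}(0)$ is closed, so the strong dual of the quotient is the annihilator, $(\SSS(\RR^d)/N)'\cong N^{\perp}\subseteq(\SSS(\RR^d))'$; a distribution $T\in(\SSS(\RR^d))'$ lies in $N^{\perp}$ iff it vanishes on every Schwartz function supported in $\RR^d\backslash\RR^d_+$, which is precisely $\supp T\subseteq\overline{\RR^d_+}$, identifying $(\SSS(\RR^d_+))'$ with the closed subspace of tempered distributions supported in $\overline{\RR^d_+}$. The main obstacle is the extension step: verifying that the iterated Seeley construction lands in $\SSS(\RR^d)$ with continuous dependence on $g$, the delicate points being the rapid decay of $E_1g$ as $x\to-\infty$ (controlled by the cutoff $\varphi$ together with the summability of the $a_k$) and the matching of all mixed derivatives at the corners of the cube.
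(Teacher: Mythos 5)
Your argument is essentially correct in structure, but note that the paper itself contains no proof of this statement: it is quoted from the reference \cite{Sm} (``Extension theorem of Whitney type for $\mathcal{S}(\mathbb{R}^d_+)$ by the use of the Kernel Theorem''), and the method indicated there is to combine a Whitney-type extension with the kernel theorem $\SSS(\RR^{d_1}_+)\hat{\otimes}\SSS(\RR^{d_2}_+)\cong\SSS(\RR^{d_1+d_2}_+)$ and the Laguerre sequence-space isomorphism $\SSS(\RR^d_+)\cong\sss$, i.e.\ the multidimensional case is obtained by tensoring rather than by iterating a one-variable extension coordinate by coordinate as you propose. Your route --- an explicit Seeley operator $E_1$ with coefficients satisfying $\sum_k a_k(-b_k)^n=1$ and $\sum_k|a_k|b_k^n<\infty$, applied successively in $x_1,\dots,x_d$ --- is more elementary and self-contained, and it delivers the strictly stronger conclusion that the restriction $R$ admits a continuous linear right inverse, so that $N=\ker R$ is complemented in $\SSS(\RR^d)$; the homomorphism property, the quotient description and the duality statement then follow formally exactly as you say. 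What the tensor-product route buys instead is that one only ever has to verify the extension estimates in one variable, with the corner behaviour absorbed into the (already established) nuclearity and kernel-theorem machinery.

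Two points in your sketch still need genuine work. First, the iteration step: after extending in $x_1$ you land in a mixed space $\SSS(\RR\times\RR^{d-1}_+)$ which you must define and for which you must check that $E_1$ acts continuously with the remaining variables as parameters, uniformly in the seminorms involving derivatives in those parameters; this is exactly the corner-matching you flag, and it is the real content of the proof. Second, the closing identification is not ``precisely'' a restatement: to see that $\supp T\subseteq\overline{\RR^d_+}$ implies $\langle T,f\rangle=0$ for every $f\in N$, observe that $\supp T$ and $\supp f$ may meet along $\partial\RR^d_+$, so one must use that such an $f$ vanishes to infinite order on $\overline{\RR^d_+}$, together with a cutoff supported at distance $O(\varepsilon)$ from the boundary, Taylor's formula, and the finite order of tempered distributions, to conclude that the pairing vanishes in the limit $\varepsilon\to 0$. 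Both points are standard, but they are where the proof actually lives.
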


\begin{rem}\label{derinss}
The fact that $(\SSS(\RR^d_+))'$ is canonically isomorphic to the
closed subspace of $(\SSS(\RR^d))'$ which consists of all tempered
distributions with support in $\overline{\RR^d_+}$ allows us to
define unambiguously the notion of derivatives of the elements of
$(\SSS(\RR^d_+))'$. In fact, for $T\in(\SSS(\RR^d_+))'$ and
$n\in\NN^d_0$, $D^nT$ stands for the $D^n$-derivative of $T$ in
$(\SSS(\RR^d))'$ sense. Since $\mathrm{supp}\, D^nT\subseteq
\overline{\RR^d_+}$, $D^nT$ is a well defined element of
$(\SSS(\RR^d_+))'$. Moreover, by $\SSS(\RR^d_+)\cong\SSS(\RR^d)/N$
(see Theorem \ref{repofsupp}) \beas \langle
D^nT,\varphi\rangle=(-1)^{|n|}\langle T,D^n\varphi\rangle,\,\,\,
\forall\varphi\in\SSS(\RR^d_+). \eeas It is important to stress
that if $T$ is given by $\psi\in\SSS(\RR^d_+)$ then $D^nT$ does
not have to coincide with the classical $D^n$-derivative of $\psi$
(unless $\psi$ can be extended to a smooth function on $\RR^d$
with support in $\overline{\RR^d_+}$). Considering $\psi$ as an
element of $(\SSS(\RR^d_+))'$ automatically means extending it by
$0$ on $\RR^d\backslash\overline{\RR^d_+}$. Of course, this
extension does not have to be smooth.
\end{rem}

\section{Definition and basic properties of the spaces $G^{\beta}_{\alpha}(\RR^d_+)$, $\alpha,\beta\geq 0$}

Unless otherwise stated, $\alpha$ and $\beta$ are two positive
reals. In the sequel, we will often tacitly apply the following
inequalities (we use $0^0=1$) \beas m^{\alpha m}n^{\alpha n}\leq
(m+n)^{\alpha(m+n)},\,\, (m+n)^{\alpha(m+n)}\leq
e^{\alpha|m+n|}m^{\alpha m}n^{\alpha n},\,\, \forall
m,n\in\NN^d_0,\,\, \forall \alpha\geq 0. \eeas
We define the basic test spaces (cf. \cite[Definition 2.1]{D2} for $d$=1):\\
Let $A>0$. We denote by $G^{\beta,A}_{\alpha,A}(\RR^d_+)$ the
space of all $f\in\SSS(\RR^d_+)$ for which \beas
\sup_{p,k\in\NN^d_0}\frac{\|t^{(p+k)/2}D^pf(t)\|_2}
{A^{|p+k|}k^{(\alpha/2)k}p^{(\beta/2)p}}<\infty. \eeas With the
following seminorms \beas
\sigma_{A,j}(f)=\sup_{p,k\in\NN^d_0}\frac{\|t^{(p+k)/2}D^pf(t)\|_{L^2(\RR^d_+)}}
{A^{|p+k|}k^{(\alpha/2)k}p^{(\beta/2)p}}+\sup_{\substack{|p|\leq
j\\ |k|\leq j}} \sup_{t\in\RR^d_+}|t^k D^pf(t)|,\,\, j\in\NN_0,
\eeas one easily verifies that it becomes an $(F)$-space. Clearly,
if $A_1< A_2$, $G^{\beta,A_1}_{\alpha,A_1}(\RR^d_+)$ is
continuously injected into $G^{\beta,A_2}_{\alpha,A_2}(\RR^d_+)$.
Define $\ds G^{\beta}_{\alpha}(\RR^d_+)=
\lim_{\substack{\longrightarrow\\ A\rightarrow \infty}}
G^{\beta,A}_{\alpha,A}(\RR^d)$. Since all the injections
$G^{\beta,A}_{\alpha,A}\rightarrow \SSS(\RR^d_+)$ are continuous,
$G^{\beta}_{\alpha}(\RR^d_+)$ is indeed a (Hausdorff) l.c.s..
Clearly, $G^{\beta}_{\alpha}(\RR^d_+)$ is continuously injected
into
$\SSS(\RR^d_+)$. As inductive limit of an $(F)$-spaces, $G^{\beta}_{\alpha}(\RR^d_+)$ is a barrelled and bornological l.c.s..\\
\indent For $A>0$ we define $G_{\alpha,A}(\RR^d_+)$ to be the
space of all $f\in\SSS(\RR^d_+)$ such that \beas
\sup_{k\in\NN^d_0}\frac{\|t^{(p+k)/2}D^pf(t)\|_2}
{A^{|k|}k^{(\alpha/2)k}}<\infty,\,\, \forall p\in\NN^d_0 \eeas and
similarly, $G^{\beta,A}(\RR^d_+)$ to be the space of all
$f\in\SSS(\RR^d_+)$ such that \beas
\sup_{p\in\NN^d_0}\frac{\|t^{(p+k)/2}D^pf(t)\|_2}
{A^{|p|}p^{(\beta/2)p}}<\infty,\,\, \forall k\in\NN^d_0. \eeas If
we equip $G_{\alpha,A}(\RR^d_+)$ with the system of seminorms
\beas \sigma'_{A,j}(f)=\sup_{|p|\leq
j}\sup_{k\in\NN^d_0}\frac{\|t^{(p+k)/2}D^pf(t)\|_{L^2(\RR^d_+)}}
{A^{|k|}k^{(\alpha/2)k}}+\sup_{\substack{|p|\leq j\\ |k|\leq j}}
\sup_{t\in\RR^d_+}|t^k D^pf(t)|,\,\, j\in\NN_0, \eeas one easily
verifies that it becomes an $(F)$-space. Analogously, by equipping
$G^{\beta,A}(\RR^d_+)$ with the system of seminorms \beas
\sigma''_{A,j}(f)=\sup_{|k|\leq j}
\sup_{p\in\NN^d_0}\frac{\|t^{(p+k)/2}D^pf(t)\|_{L^2(\RR^d_+)}}
{A^{|p|}p^{(\beta/2)p}}+\sup_{\substack{|p|\leq j\\ |k|\leq j}}
\sup_{t\in\RR^d_+}|t^k D^pf(t)|,\,\, j\in\NN_0, \eeas it is also
an $(F)$-space. Similarly as above, we define $\ds
G_{\alpha}(\RR^d_+)=\lim_{\substack{\longrightarrow\\ A\rightarrow
\infty}} G_{\alpha,A}(\RR^d_+)$ and $\ds
G^{\beta}(\RR^d_+)=\lim_{\substack{\longrightarrow\\ A\rightarrow
\infty}}G^{\beta,A}(\RR^d_+)$. Thus, $G_{\alpha}(\RR^d_+)$ and
$G^{\beta}(\RR^d_+)$ are barrelled and bornological l.c.s. that are continuously injected into $\SSS(\RR^d_+)$.\\
\indent For each $m\in\NN^d_0$, $f(t)\mapsto t^m f(t)$ is a
continuous mapping $G_\alpha(\mathbb{R}^d_+)\rightarrow
G_{\alpha}(\RR^d_+)$, $G^\beta(\mathbb{R}^d_+)\rightarrow
G^{\beta}(\RR^d_+)$ and
$G_\alpha^\beta(\mathbb{R}^d_+)\rightarrow G^{\beta}_{\alpha}(\RR^d_+)$.\\
\indent We denote by $(G^{\beta}(\RR^d_+))'$,
$(G_{\alpha}(\RR^d_+))'$ and $(G^{\beta}_{\alpha}(\RR^d_+))'$ the
strong duals
of $G^{\beta}(\RR^d_+)$, $G_{\alpha}(\RR^d_+)$ and $G^{\beta}_{\alpha}(\RR^d_+)$, respectively.\\
\indent One easily verifies that when $\alpha,\beta\geq 1$,
$\mathcal{L}_n\in G^{\beta}_{\alpha}(\RR^d_+)$ and hence
$G^{\beta}_{\alpha}(\RR^d_+)$ is dense in $\SSS(\RR^d_+)$. In
particular, for $\alpha\geq 1$, $G_{\alpha}(\RR^d_+)$,
$G^{\alpha}(\RR^d_+)$ and $G^{\alpha}_{\alpha}(\RR^d_+)$ are dense
in $\SSS(\RR^d_+)$. Hence, $(\SSS(\RR^d_+))'$ is continuously
injected into $(G_{\alpha}(\RR^d_+))'$, $(G^{\alpha}(\RR^d_+))'$
and $(G^{\alpha}_{\alpha}(\RR^d_+))'$.

\begin{rem}
Let $\alpha,\beta>0$. Then the spaces
$G_{\alpha}^\beta(\mathbb{R}^d_+)$ are non-trivial when
$\alpha+\beta\geq 2$. We refer to \cite[Corollary 3.9]{D2} for
d=1. For $d$-dimensional case it follows considering the function
$\varphi(t)=\varphi_1(t_1)\ldots \varphi_d(t_d)$, where
$\varphi_j$, $j=1,\ldots,d$, is a non-zero element of
$G^{\beta}_{\alpha}(\RR_+)$.
\end{rem}

\section{The Hankel-Clifford transform}

Let $\mathcal{C}_{L^{\infty}}(\overline{\RR^d_+})$ be a
$(B)$-space of all continuous functions
$f:\overline{\RR^d_+}\rightarrow \CC$ such that
$\ds\sup_{x\in\overline{\RR^d_+}}|f(x)|<\infty$; the norm of $f\in\mathcal{C}_{L^{\infty}}(\overline{\RR^d_+})$ is given by the left-hand side.\\
\indent For $\gamma\geq 0$, we denote by $J_{\gamma}$ and
$I_{\gamma}$ the Bessel function of the first kind and the
modified Bessel function of the first kind, respectively. Denote
$\mathbf{T}^{(d)}=\{z\in\CC^d|\, |z_l|=1,\, z_l\neq 1,\, \forall
l=1,\ldots,d\}$. For $z\in\mathbf{T}^{(d)}$ and
$\gamma\in\overline{\RR^d_+}$, we define the fractional powers and
the modified fractional powers of the Hankel-Clifford transform of
$f\in \SSS(\mathbb{R}_+^d)$ by \beas
\mathcal{I}_{z,\gamma}f(t)&=&\left(\prod_{l=1}^d(1-z_l)^{-1}e^{-\frac{1}{2}\frac{1+z_l}{1-z_l}t_l}\right)\int_{\RR^d_+}f(x)\prod_{l=1}^d
e^{-\frac{1}{2}
\frac{1+z_l}{1-z_l}x_l} (x_lt_lz_l)^{-\gamma_l/2}x_l^{\gamma_l}I_{\gamma_l}\left(\frac{2\sqrt{x_lt_lz_l}}{1-z_l}\right)dx\\
\mathcal{J}_{z,\gamma}f(t)&=&\left(\prod_{l=1}^d(1-z_l)^{-1}\right)\int_{\RR^d_+}f(x)\prod_{l=1}^d
(x_lt_lz_l)^{-\gamma_l/2}x_l^{\gamma_l}I_{\gamma_l}\left(\frac{2\sqrt{x_lt_lz_l}}{1-z_l}\right)dx.
\eeas Since $z\in\mathbf{T}^{(d)}$, $z_l=e^{i\theta_l}$ where
$\theta_l\in(-\pi,\pi]\backslash\{0\}$, $l=1,\ldots,d$. Observe
that $(1+z_l)/(1-z_l)$ is purely imaginary. Moreover,
$2\sqrt{x_lt_lz_l}/(1-z_l)=i\sqrt{x_lt_l}/\sin(\theta_l/2)$ and
$(x_lt_lz_l)^{-\gamma_l/2}=(x_lt_l)^{-\gamma_l/2}e^{-i\theta_l\gamma_l/2}$.
Hence, for $l=1,\ldots,d$, \bea\label{equfb}
(x_lt_lz_l)^{-\gamma_l/2}I_{\gamma_l}\left(\frac{2\sqrt{x_lt_lz_l}}{1-z_l}\right)=e^{-i\theta_l\gamma_l/2}(x_lt_l)^{-\gamma_l/2}
e^{(i\gamma_l\pi\mathrm{sgn}\,
\theta_l)/2}J_{\gamma_l}\left(\frac{\sqrt{x_lt_l}}{|\sin(\theta_l/2)|}\right).
\eea By the definition of the Bessel function of the first kind,
it is clear that for $\nu\geq 0$, $\xi^{-\nu}|J_{\nu}(\xi)|$ is
uniformly bounded when $\xi\in(0,c)$ for arbitrary but fixed
$c\geq 1$. Combining this with \cite[9.2.1, p. 364]{abrsteg}, we
obtain that there exists $C\geq 1$ such that \bea\label{equfb11}
\left|\prod_{l=1}^d
(x_lt_lz_l)^{-\gamma_l/2}I_{\gamma_l}\left(\frac{2\sqrt{x_lt_lz_l}}{1-z_l}\right)\right|\leq
C,\,\, \forall x,t\in \RR^d_+. \eea Moreover, for $\nu\geq0$, by
the definition of $J_{\nu}$, the function $\xi\mapsto
\xi^{-\nu}J_{\nu}(\xi)$, $\RR_+\rightarrow \CC$, can be extended
to a continuous function on $\overline{\RR_+}$. Hence,
(\ref{equfb}) and (\ref{equfb11}) imply that for
$f\in\SSS(\RR^d_+)$ the integrals in the definition for
$\mathcal{I}_{z,\gamma}f$ and $\mathcal{J}_{z,\gamma} f$ converge
absolutely i.e. $\mathcal{I}_{z,\gamma}f,\mathcal{J}_{z,\gamma}
f\in\mathcal{C}_{L^{\infty}}(\overline{\RR^d_+})$. when
$f_j\rightarrow f$ in $\SSS(\RR^d_+)$,
$\mathcal{I}_{z,\gamma}f_j\rightarrow \mathcal{I}_{z,\gamma}f$ and
$\mathcal{J}_{z,\gamma}f_j\rightarrow \mathcal{J}_{z,\gamma}f$ in
$\mathcal{C}_{L^{\infty}}(\overline{\RR^d_+})$. Hence,
$\mathcal{I}_{z,\gamma}$ and $\mathcal{J}_{z,\gamma}$ are well
defined continuous mappings from $\SSS(\RR^d_+)$ to
$\mathcal{C}_{L^{\infty}}(\overline{\RR^d_+})$. Our goal is to
prove that $\mathcal{I}_{z,\gamma}$ and $\mathcal{J}_{z,\gamma}$
are continuous mappings from $\SSS(\RR^d_+)$ to $\SSS(\RR^d_+)$.
Firstly, we prove this for $\mathcal{J}_{z,\gamma}$ in the case
$d=1$.

\begin{lem}\label{ttt111355}
For $z\in\mathbf{T}^{(1)}$ and $\gamma\geq 0$,
$\mathcal{J}_{z,\gamma}$ is a continuous mapping from
$\SSS(\RR_+)$ into $\SSS(\RR_+)$.
\end{lem}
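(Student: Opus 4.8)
The plan is to reduce $\mathcal{J}_{z,\gamma}$ to a scaled Hankel--Clifford transform and then exploit two intertwining identities. Writing $z=e^{i\theta}$ with $\theta\in(-\pi,\pi]\setminus\{0\}$ and $\lambda=1/|\sin(\theta/2)|$, relation (\ref{equfb}) shows that, up to a constant $c_{z,\gamma}$ depending only on $z,\gamma$,
\[
\mathcal{J}_{z,\gamma}f(t)=c_{z,\gamma}\int_0^\infty f(x)\,\phi(xt)\,x^{\gamma}\,dx,\qquad \phi(u)=u^{-\gamma/2}J_\gamma(\lambda\sqrt{u}).
\]
The first point I would record is that the power-series expansion of $J_\gamma$ makes $\phi$ an \emph{entire} function of $u$ whose derivatives, like $\phi$ itself, are bounded on $\overline{\RR_+}$ (the bound near infinity coming from (\ref{equfb11}) and the asymptotics of Bessel functions).

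Next I would establish the two operator identities on which everything rests. A direct computation from Bessel's equation gives the Hankel--Clifford ODE $u\phi''(u)+(\gamma+1)\phi'(u)+\tfrac{\lambda^2}{4}\phi(u)=0$, whence, with $\mathcal{B}_0:=x^{-\gamma}\tfrac{d}{dx}\big(x^{\gamma+1}\tfrac{d}{dx}\big)$ acting in the variable $x$, the kernel satisfies $\mathcal{B}_0\phi(xt)=-\tfrac{\lambda^2}{4}\,t\,\phi(xt)$; moreover, since $\phi(xt)$ depends only on the product $xt$, one has the Euler relation $t\,\partial_t\phi(xt)=x\,\partial_x\phi(xt)$. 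Integrating by parts (the boundary terms vanish because $f\in\SSS(\RR_+)$ decays rapidly at infinity while the factor $x^{\gamma+1}$ kills the contribution at $0$ and $\phi,\phi'$ stay bounded), and using that $\mathcal{B}_0$ is formally self-adjoint for the weight $x^{\gamma}dx$, I obtain
\[
t\,\mathcal{J}_{z,\gamma}f=\mathcal{J}_{z,\gamma}(\mathcal{B}f),\qquad t\tfrac{d}{dt}\,\mathcal{J}_{z,\gamma}f=\mathcal{J}_{z,\gamma}(\mathcal{E}f),
\]
where $\mathcal{B}=-\tfrac{4}{\lambda^2}\mathcal{B}_0$ and $\mathcal{E}=-(\gamma+1)\,\mathrm{id}-x\tfrac{d}{dx}$ are differential operators that clearly map $\SSS(\RR_+)$ continuously into itself.

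With these in hand the seminorm estimates follow. Using the Euler-operator identity $t^p D^p=\prod_{j=0}^{p-1}\big(t\tfrac{d}{dt}-j\big)=:Q_p\big(t\tfrac{d}{dt}\big)$ together with the second intertwining relation iterated, I get, for $k\ge p$,
\[
t^{k}D^{p}\mathcal{J}_{z,\gamma}f=\mathcal{J}_{z,\gamma}\big(\mathcal{B}^{\,k-p}Q_p(\mathcal{E})f\big),
\]
and since $\mathcal{J}_{z,\gamma}\colon\SSS(\RR_+)\to\mathcal{C}_{L^\infty}(\overline{\RR_+})$ is already known to be continuous, the right-hand side is bounded by a seminorm of $f$; this controls $\sup_{t}t^{k}|D^{p}\mathcal{J}_{z,\gamma}f(t)|$ for all $k\ge p$. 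The remaining cases $k<p$ reduce to bounding $D^{p}\mathcal{J}_{z,\gamma}f$ near $t=0$: here I would differentiate under the integral sign, $D^{p}\mathcal{J}_{z,\gamma}f(t)=c_{z,\gamma}\int_0^\infty f(x)\,x^{p}\phi^{(p)}(xt)\,x^{\gamma}dx$, and use the boundedness of $\phi^{(p)}$ together with the rapid decay of $f$ to see that this integral converges uniformly for $t\in\overline{\RR_+}$, so $D^{p}\mathcal{J}_{z,\gamma}f$ extends continuously and boundedly up to $t=0$. Splitting $\sup_{t>0}$ into the ranges $t\le 1$ and $t\ge1$ and combining the two estimates shows that every defining seminorm of $\mathcal{J}_{z,\gamma}f$ is dominated by a seminorm of $f$; hence $\mathcal{J}_{z,\gamma}f\in\SSS(\RR_+)$ and the map is continuous. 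I expect the genuine difficulty to be precisely this behaviour at the origin: the intertwining argument degenerates as $t\to0$ (it only controls $t^{p}D^{p}\mathcal{J}_{z,\gamma}f$, hence permits a negative power of $t$ as $t\to0$), so smoothness and boundedness up to the boundary must be extracted directly from the kernel via the entirety of $\phi$ and careful uniform bounds on $\phi^{(p)}$.
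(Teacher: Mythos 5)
Your proof is correct, but it follows a genuinely different route from the paper's. The paper argues through the Laguerre basis: it first shows that the coefficients $a_n(f)=\int_0^\infty f(t)\mathcal{L}^{\gamma}_n(t)t^{\gamma}dt$ of any $f\in\SSS(\RR_+)$ are rapidly decreasing (via the Laguerre operator $E_{\gamma}$ and integration by parts), then uses the classical identity $\int_0^\infty J_{\gamma}(\sqrt{xt})x^{\gamma/2}\mathcal{L}^{\gamma}_n(x)dx=2(-1)^n t^{\gamma/2}\mathcal{L}^{\gamma}_n(t)$ to see that $\mathcal{J}_{z,\gamma}\mathcal{L}^{\gamma}_n$ is just a constant multiple of a dilated $\mathcal{L}^{\gamma}_n$, invokes Duran's pointwise bounds on $t^kD^p\mathcal{L}^{\gamma}_n$ to get absolute convergence of the transformed series in $\SSS(\RR_+)$, and finally obtains continuity non-constructively from the closed graph theorem for Fr\'echet spaces. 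You instead work directly with the kernel $\phi(xt)$, $\phi(u)=u^{-\gamma/2}J_{\gamma}(\lambda\sqrt{u})$, derive the two intertwining relations $t\,\mathcal{J}_{z,\gamma}=\mathcal{J}_{z,\gamma}\mathcal{B}$ and $t\frac{d}{dt}\mathcal{J}_{z,\gamma}=\mathcal{J}_{z,\gamma}\mathcal{E}$ by integration by parts (the boundary terms do vanish as you say, since $\gamma+1>0$ kills the contribution at $0$ and rapid decay kills it at $\infty$), and convert them into explicit seminorm estimates, treating the region near $t=0$ separately by differentiating under the integral sign and using that $\phi^{(p)}(u)$ is, up to a constant, $u^{-(\gamma+p)/2}J_{\gamma+p}(\lambda\sqrt u)$ and hence bounded on $\overline{\RR_+}$. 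Your correctly identified weak point --- that the intertwining argument only controls $t^pD^p$ and so degenerates at the origin --- is adequately repaired by the direct kernel bound. What each approach buys: yours is self-contained, avoids both Duran's Laguerre estimates and the closed graph theorem, and yields quantitative continuity (each seminorm of $\mathcal{J}_{z,\gamma}f$ is explicitly dominated by a seminorm of $f$); the paper's route is less elementary here but front-loads the eigenfunction relation (\ref{tttss11}) and the density of the Laguerre span, both of which it needs anyway for the isometry on $L^2(\RR_+,t^{\gamma}dt)$ and for the subsequent results of Sections 4 and 5.
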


\begin{proof} Clearly $\SSS(\RR_+)$ is continuously injected into $L^2(\RR_+,t^{\gamma}dt)$. Let $E_{\gamma}$ be the operator
\beas
E_{\gamma}=tD^2+D-\frac{t}{4}-\frac{\gamma^2}{4t}+\frac{\gamma+1}{2}=D(tD)-\frac{t}{4}-\frac{\gamma^2}{4t}+\frac{\gamma+1}{2}.
\eeas Then
$E_{\gamma}(t^{\gamma/2}\mathcal{L}^{\gamma}_n(t))=-nt^{\gamma/2}\mathcal{L}^{\gamma}_n(t)$
(see \cite[(11), p. 188]{Ed}). For $f\in \SSS(\RR_+)$ we have
\beas
E_{\gamma}(t^{\gamma/2}f(t))=t^{\gamma/2}\left((\gamma+1)Df(t)+tD^2f(t)-tf(t)/4+(\gamma+1)f(t)/2\right).
\eeas Hence, for $k\in\NN$,
$E^k_{\gamma}(t^{\gamma/2}f(t))=t^{\gamma/2}g_k(t)$ for some
$g_k\in\SSS(\RR_+)$. Let
$a_n(f)=\int_0^{\infty}f(t)\mathcal{L}^{\gamma}_n(t)t^{\gamma}dt$.
Then, by integration by parts, we have \beas
\int_0^{\infty}g_1(t)\mathcal{L}^{\gamma}_n(t)t^{\gamma}dt=\int_0^{\infty}E_{\gamma}(t^{\gamma/2}f(t))
\mathcal{L}^{\gamma}_n(t)t^{\gamma/2}dt =-na_n(f). \eeas Iterating
this, we obtain \beas
\int_0^{\infty}g_k(t)\mathcal{L}^{\gamma}_n(t)t^{\gamma}dt=(-n)^ka_n(f).
\eeas Since $g_k\in\SSS(\RR_+)\subseteq L^2(\RR_+,t^{\gamma}dt)$,
we conclude $\{a_n(f)\}_{n\in\NN_0}\in\sss$. Observe that
$f=\sum_n a_n(f)\mathcal{L}^{\gamma}_n$ in
$L^2(\RR_+,t^{\gamma}dt)$. We need the following estimate for the
derivatives of the Laguerre polynomials (see \cite[Theorem
1]{durlp}): \beas \left|t^kD^p(e^{-t/2}L_n^{\gamma}(t))\right|\leq
2^{-\min\{\gamma,k\}}4^k(n+1)\cdot\ldots\cdot(n+k)\binom{n+\max\{\gamma-k,0\}+p}
{n}, \eeas for all $t\geq 0$, $n,k,p\in\NN_0$. Denote by
$[\gamma]$ the integral part of $\gamma$, we have \beas
\binom{n+\max\{\gamma-k,0\}+p}{n}\leq
\binom{n+[\gamma]+1+p}{n}\leq (n+[\gamma]+p+1)^{[\gamma]+p+1}.
\eeas Hence, there exists $C_{p,k}\geq 1$ which depends on $p$ and
$k$, but not on $n$, such that \bea\label{estlag333}
\left|t^kD^p\mathcal{L}_n^{\gamma}(t)\right|\leq
C_{p,k}(n+1)^{k+p+[\gamma]+1}. \eea Since $\{a_n(f)\}_n\in\sss$,
we have
$\sum_n|a_n(f)|\sup_{t\in\RR_+}|t^kD^p\mathcal{L}^{\gamma}_n(t)|<\infty$,
i.e. $\sum_na_n(f)\mathcal{L}^{\gamma}_n$ converges absolutely in
$\SSS(\RR_+)$. Since
$\mathcal{J}_{z,\gamma}f:\SSS(\RR_+)\rightarrow
\mathcal{C}_{L^{\infty}}(\overline{\RR_+})$ is continuous,
$\mathcal{J}_{z,\gamma}f=\sum_n
a_n(f)\mathcal{J}_{z,\gamma}\mathcal{L}^{\gamma}_n$ and the series
converges absolutely in
$\mathcal{C}_{L^{\infty}}(\overline{\RR_+})$. We need the
following equality (see \cite[(4.20.3), p. 83]{Le})
\bea\label{eqlag} \int_0^{\infty} J_{\gamma}(\sqrt{xt})
x^{\gamma/2}\mathcal{L}^{\gamma}_n(x)dx=2(-1)^n
t^{\gamma/2}\mathcal{L}^{\gamma}_n(t),\,\, \gamma\geq 0,\,
n\in\NN_0. \eea Using (\ref{equfb}) and (\ref{eqlag}) we obtain
\bea\label{tttss11}
\mathcal{J}_{z,\gamma}\mathcal{L}^{\gamma}_n(t)=2(-1)^{n}
e^{-i\gamma\theta/2}e^{(i\gamma\pi\mathrm{sgn}\,\theta)/2}(1-e^{i\theta})^{-1}|\sin(\theta/2)|^{-\gamma}
\mathcal{L}^{\gamma}_n\left(t/\sin^2(\theta/2)\right). \eea The
estimate (\ref{estlag333}) together with (\ref{tttss11}) implies
that $\sum_na_n(f)\mathcal{J}_{z,\gamma}\mathcal{L}^{\gamma}_n$
converges absolutely in $\SSS(\RR_+)$. Thus, we obtain that the
image of $\SSS(\RR_+)$ under $\mathcal{J}_{z,\gamma}$ is contained
in $\SSS(\RR_+)$. Since
$\mathcal{J}_{z,\gamma}:\SSS(\RR_+)\rightarrow
\mathcal{C}_{L^{\infty}}(\overline{\RR_+})$ is continuous its
graph is closed in $\SSS(\RR_+)\times
\mathcal{C}_{L^{\infty}}(\overline{\RR_+})$. As $\SSS(\RR_+)$ is
continuously injected into
$\mathcal{C}_{L^{\infty}}(\overline{\RR_+})$ and
$\mathcal{J}_{z,\gamma}\left(\SSS(\RR_+)\right)\subseteq
\SSS(\RR_+)$, the graph of $\mathcal{J}_{z,\gamma}$ is closed in
$\SSS(\RR_+)\times\SSS(\RR_+)$. Since $\SSS(\RR_+)$ is an
$(F)$-space, the closed graph theorem implies that
$\mathcal{J}_{z,\gamma}:\SSS(\RR_+)\rightarrow \SSS(\RR_+)$ is
continuous. \qed
\end{proof}

Now, by the principle of induction, we show that for
$z\in\mathbf{T}^{(d)}$ and $\gamma\in\overline{\RR^d_+}$,
$\mathcal{J}_{z,\gamma}$ is a continuous mapping from
$\SSS(\RR^d_+)$ into itself. When $f\in\SSS(\RR^d_+)$, we denote
$\mathcal{J}_{z,\gamma}$ by $\mathcal{J}^{(d)}_{z,\gamma}$ in
order to avoid confusions. We already considered the case $d=1$;
$\mathcal{J}^{(1)}_{z,\gamma}:\SSS(\RR_+)\rightarrow \SSS(\RR_+)$
is continuous. Let $\mathcal{J}^{(d)}_{z,\gamma}$ be continuous.
Let $\nu=(\gamma,\gamma')\in\overline{\RR^{d+1}_+}$ where
$\gamma\in\overline{\RR^d_+}$ and $\gamma'\geq 0$ and let
$\zeta=(z,z')\in \mathbf{T}^{(d+1)}$ where $z\in\mathbf{T}^{(d)}$
and $z'\in\mathbf{T}^{(1)}$. The mapping
$\mathcal{J}^{(d)}_{z,\gamma}\otimes\mathcal{J}^{(1)}_{z',\gamma'}:
\SSS(\RR^d_+)\otimes_{\pi}\SSS(\RR_+)\rightarrow
\SSS(\RR^d_+)\otimes_{\pi}\SSS(\RR_+)$ is continuous. Denoting by
$\tilde{\mathcal{J}}_{\zeta,\nu}$ its continuous extension on the
completions, Theorem \ref{kerth} yields that
$\tilde{\mathcal{J}}_{\zeta,\nu}$ is a continuous mapping from
$\SSS(\RR^{d+1}_+)$ into itself. Observe that for each $f\in
\SSS(\RR^d_+)\otimes\SSS(\RR_+)$,
$\mathcal{J}^{(d+1)}_{\zeta,\nu}f(t)=\tilde{\mathcal{J}}_{\zeta,\nu}f(t)$,
$\forall t\in\RR^{d+1}$. Thus
$\mathcal{J}^{(d+1)}_{\zeta,\nu}f\in\SSS(\RR^{d+1}_+)$. If
$f\in\SSS(\RR^{d+1}_+)$, there exists a sequence $f_j\in
\SSS(\RR^d_+)\otimes\SSS(\RR_+)$, $j\in\NN$, such that
$f_j\rightarrow f$ in $\SSS(\RR^{d+1}_+)$ (cf. Theorem
\ref{kerth}; $\SSS(\RR^{d+1}_+)$ is an $(F)$-space). Since we
proved that
$\mathcal{J}^{(d+1)}_{\zeta,\nu}:\SSS(\RR^{d+1}_+)\rightarrow\mathcal{C}_{L^{\infty}}
(\overline{\RR^{d+1}_+})$ is continuous (see the discussion before
Lemma \ref{ttt111355}, we have, for each fixed $t\in\RR^{d+1}_+$,
\beas \mathcal{J}^{(d+1)}_{\zeta,\nu}f(t)=\lim_{j\rightarrow
\infty}\mathcal{J}^{(d+1)}_{\zeta,\nu}f_j(t)= \lim_{j\rightarrow
\infty}\tilde{\mathcal{J}}_{\zeta,\nu}f_j(t)=\tilde{\mathcal{J}}_{\zeta,\nu}f(t).
\eeas
Hence $\mathcal{J}^{(d+1)}_{\zeta,\nu}f\in\SSS(\RR^{d+1}_+)$ and $\mathcal{J}^{(d+1)}_{\zeta,\nu}f=\tilde{\mathcal{J}}_{\zeta,\nu}f$, $\forall f\in\SSS(\RR^{d+1}_+)$. We conclude that $\mathcal{J}^{(d+1)}_{\zeta,\nu}:\SSS(\RR^{d+1}_+)\rightarrow \SSS(\RR^{d+1}_+)$ is continuous.\\
\indent Next we prove that $\mathcal{J}_{z,\gamma}$ extends to
isometry from
$L^2(\RR^d_+,t^{\gamma}dt)$ onto itself. Firstly, we prove the following claim:\\
\indent For $\gamma\in\overline{\RR^d_+}$, let
$V^{(d)}_{\gamma}\subseteq \SSS(\RR^d_+)$ be the space which
consists of all finite linear combinations of the form
$\sum_{k\leq n}a_k\mathcal{L}^{\gamma}_k$, where $a_k\in\CC$.
Then, for each
$\gamma\in\overline{\RR^d_+}$, $V^{(d)}_{\gamma}$ is dense in $\SSS(\RR^d_+)$.\\
\indent The proof follows by the principle of induction on the
dimension. For $d=1$, it is already proved in the first part of
the proof of Lemma \ref{ttt111355}. Assume that the assertion
holds for $d\in\NN$. Let
$\nu=(\gamma,\gamma')\in\overline{\RR^{d+1}_+}$ where
$\gamma\in\overline{\RR^d_+}$ and $\gamma'\geq 0$. The inductive
hypothesis implies that $V^{(d)}_{\gamma}\otimes
V^{(1)}_{\gamma'}$ is dense in
$\SSS(\RR^d_+)\otimes_{\epsilon}\SSS(\RR_+)$ and consequently in
$\SSS(\RR^{d+1}_+)$ by Theorem \ref{kerth}. One easily verifies
that $V^{(d)}_{\gamma}\otimes V^{(1)}_{\gamma'}\subseteq V^{(d+1)}_{\nu}$ and the proof is completed.\\
\indent By (\ref{equfb}) and (\ref{eqlag}), we obtain
\bea\label{tttss}
\mathcal{J}_{z,\gamma}\mathcal{L}^{\gamma}_n(t)=2^d(-1)^{|n|}
c_{z,\gamma}\left(\prod_{l=1}^d|\sin(\theta_l/2)|^{-\gamma_l}\right)
\mathcal{L}^{\gamma}_n\left(\frac{t_1}{\sin^2(\theta_1/2)},\ldots,
\frac{t_d}{\sin^2(\theta_d/2)}\right), \eea where
$c_{z,\gamma}=\prod_{l=1}^d
e^{-i\gamma_l\theta_l/2}e^{(i\gamma_l\pi\mathrm{sgn}\,\theta_l)/2}(1-e^{i\theta_l})^{-1}$.
One easily verifies that the set
$\{\mathcal{J}_{z,\gamma}\mathcal{L}^{\gamma}_n|\, n\in\NN^d_0\}$
is orthonormal in $L^2(\RR^d_+,t^{\gamma}dt)$. Now, for $f\in
V^{(d)}_{\gamma}$ ($V^{(d)}_{\gamma}$ is a subspace of
$\SSS(\RR^d_+)$ defined in the assertion above) we have
$\|\mathcal{J}_{z,\gamma}f\|_{L^2(\RR^d_+,t^{\gamma}dt)}=\|f\|_{L^2(\RR^d_+,t^{\gamma}dt)}$.
Since $V^{(d)}_{\gamma}$ is dense in $\SSS(\RR^d_+)$ we have
$\|\mathcal{J}_{z,\gamma}f\|_{L^2(\RR^d_+,t^{\gamma}dt)}=\|f\|_{L^2(\RR^d_+,t^{\gamma}dt)}$
for all $f\in\SSS(\RR^d_+)$. Thus $\mathcal{J}_{z,\gamma}$ extends
to an isometry from $L^2(\RR^d_+,t^{\gamma}dt)$ into itself.
Secondly, we prove the surjectivity of $\mathcal{J}_{z,\gamma}$.
Note that
$\mathcal{J}_{\bar{z},\gamma}\mathcal{J}_{z,\gamma}\mathcal{L}^{\gamma}_n=\mathcal{L}^{\gamma}_n$
and
$\mathcal{J}_{z,\gamma}\mathcal{J}_{\bar{z},\gamma}\mathcal{L}^{\gamma}_n=\mathcal{L}^{\gamma}_n$,
where $\bar{z}=(\bar{z_1},\ldots,\bar{z_d})$ (it follows from
(\ref{eqlag}) and (\ref{tttss})). Hence,
$\mathcal{J}_{z,\gamma}:L^2(\RR^d_+,t^{\gamma}dt)\rightarrow
L^2(\RR^d_+,t^{\gamma}dt)$ is bijective with an inverse
$\mathcal{J}_{\bar{z},\gamma}$. Incidentally, we can also conclude
that $\mathcal{J}_{z,\gamma}:\SSS(\RR^d_+)\rightarrow
\SSS(\RR^d_+)$ is a topological isomorphism (has an inverse
$\mathcal{J}_{\bar{z},\gamma}$).\\
\indent Let $z\in\mathbf{T}^{(d)}$ and $\Phi_z(t)=\prod_{l=1}^d
e^{-\frac{1}{2}\frac{1+z_l}{1-z_l}t_l}$. Since $(1+z_l)/(1-z_l)$
is purely imaginary, for all $l=1,\ldots,d$, $|\Phi_z(t)|=1$ and
one easily verifies that the mapping $f\mapsto \Phi_z f$, is a
topological isomorphism
on $\SSS(\RR^d_+)$ and an isometry from $L^2(\RR^d_+,t^{\gamma}dt)$ onto itself. Since $\mathcal{I}_{z,\gamma}f=\Phi_z\mathcal{J}_{z,\gamma}(\Phi_zf)$ we can conclude that $\mathcal{I}_{z,\gamma}$ is topological isomorphism on $\SSS(\RR^d_+)$ and isometry from $L^2(\RR^d_+,t^{\gamma}dt)$ onto itself; clearly, its inverse is $\mathcal{I}_{\bar{z},\gamma}$.\\
\indent Now, by the same technique as in the proof of \cite[Lemma 3.2]{D2}, we have:\\
\begin{equation}\label{D2 3.4}
\left\|t^{(p+k+\gamma)/2}D^pf(t)\right\|_2=\left(\prod_{l=1}^d|1-z_l|^{-p_l+k_l}\right)
\left\|t^{(p+k+\gamma)/2}D^k\mathcal{J}_{z,\gamma}f(t)\right\|_2,\;f\in
\SSS(\mathbb{R}^d_+),
\end{equation}
\noindent for $\gamma\in\overline{\RR^d_+}$ and
$p,k\in\mathbb{N}_0^d$.

Next, we summarise the properties of $\mathcal{J}_{z,\gamma}$ and
$\mathcal{I}_{z,\gamma}$ in the following proposition:

\begin{prop}\label{HCt}
For $\gamma\in\overline{\RR^d_+}$ and $z\in\mathbf{T}^{(d)}$ the
fractional powers and the modified fractional powers of the
Hankel-Clifford transform $\mathcal{I}_{z,\gamma}$ and
$\mathcal{J}_{z,\gamma}$ are topological isomorphisms on
$\SSS(\RR^d_+)$ and they extend to isometries from
$L^2(\RR^d_+,t^{\gamma}dt)$ onto itself with inverses,
$\mathcal{I}_{\bar{z},\gamma}$ and $\mathcal{J}_{\bar{z},\gamma}$
respectively. Moreover, for all $p,k\in\NN^d_0$ and
$f\in\SSS(\RR^d_+)$, (\ref{D2 3.4}) is valid.
\end{prop}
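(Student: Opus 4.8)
The plan is to establish everything first for the modified transform $\mathcal{J}_{z,\gamma}$ and then to transfer the conclusions to $\mathcal{I}_{z,\gamma}$ by conjugating with the multiplier $\Phi_z(t)=\prod_{l=1}^d e^{-\frac{1}{2}\frac{1+z_l}{1-z_l}t_l}$. Because each $(1+z_l)/(1-z_l)$ is purely imaginary, $|\Phi_z|\equiv 1$, so $f\mapsto \Phi_z f$ is at once a topological isomorphism of $\SSS(\RR^d_+)$ and an isometry of $L^2(\RR^d_+,t^\gamma dt)$ onto itself; and comparing the two defining integrals one reads off $\mathcal{I}_{z,\gamma}f=\Phi_z\,\mathcal{J}_{z,\gamma}(\Phi_z f)$. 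Thus every property proved for $\mathcal{J}_{z,\gamma}$ passes to $\mathcal{I}_{z,\gamma}$, with $\mathcal{I}_{\bar z,\gamma}$ taking the role of the inverse $\mathcal{J}_{\bar z,\gamma}$. The entire analysis of $\mathcal{J}_{z,\gamma}$ in turn rests on its explicit action on the Laguerre orthonormal basis, formula (\ref{tttss}), which displays $\mathcal{J}_{z,\gamma}\mathcal{L}^\gamma_n$ as a unimodular constant times a fixed anisotropic dilation of $\mathcal{L}^\gamma_n$.

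Continuity of $\mathcal{J}_{z,\gamma}$ on $\SSS(\RR^d_+)$ I would obtain by induction on $d$. The base case $d=1$ is exactly Lemma \ref{ttt111355}: expand $f$ in $\{\mathcal{L}^\gamma_n\}$, use the eigenrelation $E_\gamma(t^{\gamma/2}\mathcal{L}^\gamma_n)=-n\,t^{\gamma/2}\mathcal{L}^\gamma_n$ to place the coefficients in $\sss$, transform term by term via (\ref{tttss11}), and control the resulting $\SSS(\RR_+)$-seminorms by the polynomial derivative bound (\ref{estlag333}); the closed graph theorem then makes $\mathcal{J}_{z,\gamma}$ a continuous self-map. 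For the inductive step I would note that $\mathcal{J}^{(d+1)}_{\zeta,\nu}$ coincides on the dense tensor subspace $\SSS(\RR^d_+)\otimes\SSS(\RR_+)$ with $\mathcal{J}^{(d)}_{z,\gamma}\otimes\mathcal{J}^{(1)}_{z',\gamma'}$, whose continuous extension to $\SSS(\RR^{d+1}_+)$ is furnished by the kernel theorem \ref{kerth}, and a pointwise limiting argument identifies this extension with $\mathcal{J}^{(d+1)}_{\zeta,\nu}$ on all of $\SSS(\RR^{d+1}_+)$.

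For the $L^2$ and invertibility statements, a change of variables in (\ref{tttss}) shows that $\{\mathcal{J}_{z,\gamma}\mathcal{L}^\gamma_n\}_{n\in\NN^d_0}$ is again orthonormal, while combining (\ref{eqlag}) with (\ref{tttss}) yields the composition identities $\mathcal{J}_{\bar z,\gamma}\mathcal{J}_{z,\gamma}\mathcal{L}^\gamma_n=\mathcal{L}^\gamma_n=\mathcal{J}_{z,\gamma}\mathcal{J}_{\bar z,\gamma}\mathcal{L}^\gamma_n$; these force the system to be complete and single out $\mathcal{J}_{\bar z,\gamma}$ as the inverse. On the finite-combination space $V^{(d)}_\gamma$ the map therefore preserves the $L^2(\RR^d_+,t^\gamma dt)$-norm, and since $V^{(d)}_\gamma$ is dense in $\SSS(\RR^d_+)$---again by the induction-plus-kernel-theorem scheme---the isometry extends to $\SSS(\RR^d_+)$ and then, by density of $\SSS(\RR^d_+)$, to an isometry of $L^2(\RR^d_+,t^\gamma dt)$ onto itself. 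Reading the same composition identities on $\SSS(\RR^d_+)$ (extended from $V^{(d)}_\gamma$ by continuity) shows $\mathcal{J}_{z,\gamma}$ and $\mathcal{J}_{\bar z,\gamma}$ to be mutually inverse continuous self-maps, i.e. topological isomorphisms; and the norm identity (\ref{D2 3.4}) I would obtain exactly as in \cite[Lemma 3.2]{D2}, by transporting the Laguerre recurrence and differentiation relations through $\mathcal{J}_{z,\gamma}$.

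Since the inductive continuity, the norm-preservation on $V^{(d)}_\gamma$, and the composition identities are all assembled in the discussion preceding the statement, the proposition itself is mostly bookkeeping. The one point genuinely worth care is the \emph{onto} half of the $L^2$ claim: orthonormality of $\{\mathcal{J}_{z,\gamma}\mathcal{L}^\gamma_n\}$ is not enough, and it is precisely the composition identities that supply completeness and hence surjectivity. The heaviest single step in the whole development remains the base case Lemma \ref{ttt111355}, where the convergence of the term-by-term transformed Laguerre series in the $\SSS(\RR_+)$-topology has to be wrung from the rapid decay of the coefficients together with the uniform bound (\ref{estlag333}); once that is in hand, the rest of the argument is routine.
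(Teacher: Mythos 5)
Your proposal is correct and follows essentially the same route as the paper: the paper's ``proof'' of Proposition \ref{HCt} is precisely the preceding discussion, which establishes continuity of $\mathcal{J}_{z,\gamma}$ by induction on $d$ from Lemma \ref{ttt111355} via the kernel theorem, derives the isometry and surjectivity from orthonormality of $\{\mathcal{J}_{z,\gamma}\mathcal{L}^{\gamma}_n\}$ together with the composition identities on the dense subspace $V^{(d)}_{\gamma}$, transfers everything to $\mathcal{I}_{z,\gamma}$ through $\mathcal{I}_{z,\gamma}f=\Phi_z\mathcal{J}_{z,\gamma}(\Phi_z f)$, and obtains (\ref{D2 3.4}) as in \cite[Lemma 3.2]{D2}. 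Your emphasis that the composition identities, not mere orthonormality, supply surjectivity matches the paper's argument exactly.
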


Notice that when $z=-\mathbf{1}\in\mathbf{T}^{(d)}$ then
$\mathcal{H}_{\gamma}=\mathcal{J}_{z,\gamma}=\mathcal{I}_{z,\gamma}$
where $\mathcal{H}_{\gamma}$ is the $d$-dimensional
Hankel-Clifford transform, defined as \beas
\mathcal{H}_{\gamma}(f)(t)=2^{-d}t^{-\gamma/2}\int_{\mathbb{R}_+^d}
f(x)x^{\gamma/2}\prod_{l=1}^d
J_{\gamma_l}(\sqrt{x_lt_l})dx,\;t\in\mathbb{R}^d_+. \eeas By
(\ref{tttss}), $\mathcal{L}^{\gamma}_n$, $n\in\NN^d_0$, are
eigenfunctions for $\mathcal{H}_{\gamma}$; more precisely
$\mathcal{H}_{\gamma}\mathcal{L}^{\gamma}_n=(-1)^{|n|}\mathcal{L}^{\gamma}_n$.\\
\indent Since $\mathcal{J}_{z,0}$ is an isomorphism on
$\SSS(\mathbb{R}^d_+)$, by (\ref{D2 3.4}) we have the following
result.

\begin{thm}\label{Theorem A}
The modified fractional powers of the Hankel-Clifford transform
$\mathcal{J}_{z,0}$ are isomorphisms of
$G_\alpha(\mathbb{R}^d_+)$, $G^\beta(\mathbb{R}^d_+)$ and
$G_\alpha^\beta(\mathbb{R}^d_+)$ onto $G^\alpha(\mathbb{R}^d_+)$,
$G_\beta(\mathbb{R}^d_+)$ and $G^\alpha_\beta(\mathbb{R}^d_+)$
respectively.
\end{thm}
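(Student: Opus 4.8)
The plan is to read everything off the norm identity (\ref{D2 3.4}) specialised to $\gamma=0$, which for $p,k\in\NN^d_0$ and $f\in\SSS(\RR^d_+)$ reads
\[
\left\|t^{(p+k)/2}D^pf\right\|_2=\Big(\prod_{l=1}^d|1-z_l|^{-p_l+k_l}\Big)\left\|t^{(p+k)/2}D^k\mathcal{J}_{z,0}f\right\|_2.
\]
The decisive structural feature is that passing from $f$ to $g:=\mathcal{J}_{z,0}f$ interchanges the differentiation order $p$ with the power index $k$ while leaving the total weight $(p+k)/2$ untouched. Since the subscript $\alpha$ in $G_\alpha$ controls the growth in the power index and the superscript $\beta$ in $G^\beta$ controls the growth in the differentiation order, this interchange is exactly what converts a $G_\alpha$-type estimate into a $G^\alpha$-type estimate. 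Because $z\in\mathbf{T}^{(d)}$ forces $0<|1-z_l|\le 2$, I may fix $m=\min_l|1-z_l|$ and $M=\max_l|1-z_l|$, so that the product above is bounded by $M^{|k|}m^{-|p|}$; all multiplicative constants produced this way will be absorbed into a harmless change of the parameter $A$.

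First I treat $G_\alpha\to G^\alpha$. Let $f\in G_{\alpha,A}(\RR^d_+)$ and $g=\mathcal{J}_{z,0}f$. Applying the displayed identity to the pair $(g,\bar z)$ and using $\mathcal{J}_{\bar z,0}g=f$ (Proposition~\ref{HCt}) together with $|1-\bar z_l|=|1-z_l|$, I obtain $\|t^{(p+k)/2}D^pg\|_2=\prod_l|1-z_l|^{-p_l+k_l}\|t^{(p+k)/2}D^kf\|_2\le M^{|k|}m^{-|p|}\|t^{(p+k)/2}D^kf\|_2$. Reading the defining bound of $G_{\alpha,A}$ with the differentiation order equal to $k$ and the auxiliary index equal to $p$ — a mere relabelling of the dummy indices in $\sigma'_{A,j}$ — yields $\|t^{(p+k)/2}D^kf\|_2\le M_k(f)A^{|p|}p^{(\alpha/2)p}$, where $M_k(f)$ is finite for each $k$. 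Substituting gives $\|t^{(p+k)/2}D^pg\|_2\le M^{|k|}M_k(f)(A/m)^{|p|}p^{(\alpha/2)p}$, so that for every fixed $k$ the quantity $\sup_p\|t^{(p+k)/2}D^pg\|_2/((A/m)^{|p|}p^{(\alpha/2)p})$ is finite; this is precisely the defining condition of $G^{\alpha,A/m}(\RR^d_+)$. Taking suprema over $|k|\le j$ upgrades this to the $L^2$-weighted seminorm estimate $\sigma''_{A/m,j}(g)\le M^j\,\sigma'_{A,j}(f)$.

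The pointwise ($\sup_t$) contributions to the seminorms are invisible to (\ref{D2 3.4}); for these I invoke Proposition~\ref{HCt}, which guarantees that $\mathcal{J}_{z,0}:\SSS(\RR^d_+)\to\SSS(\RR^d_+)$ is continuous. Since $G_{\alpha,A}(\RR^d_+)$ is continuously injected into $\SSS(\RR^d_+)$, the $\SSS(\RR^d_+)$-seminorms of $g$, and in particular the terms $\sup_{|p|,|k|\le j}\sup_t|t^kD^pg|$, are dominated by finitely many seminorms of $f$ in $G_{\alpha,A}(\RR^d_+)$. Combined with the previous paragraph, this shows $\mathcal{J}_{z,0}:G_{\alpha,A}(\RR^d_+)\to G^{\alpha,A/m}(\RR^d_+)$ is continuous for every $A$, and the universal property of the inductive limits then gives continuity of $\mathcal{J}_{z,0}:G_\alpha(\RR^d_+)\to G^\alpha(\RR^d_+)$.

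The remaining two cases follow by the same computation: for $G^\beta\to G_\beta$ one keeps $k$ fixed and takes the supremum over $p$, while for $G^\beta_\alpha\to G^\alpha_\beta$ one works with the single uniform double supremum, the factor $M^{|k|}m^{-|p|}$ being absorbed by replacing $A^{|p+k|}$ with $(A\max\{M,1/m\})^{|p+k|}$. Finally, bijectivity and continuity of the inverse follow by running the identical argument for $\mathcal{J}_{\bar z,0}$, which by Proposition~\ref{HCt} is the inverse of $\mathcal{J}_{z,0}$ and satisfies $|1-\bar z_l|=|1-z_l|$, so the constants are unchanged; hence each of the three maps is a topological isomorphism onto the indicated space. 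The step I expect to require the most care is the clean separation of the pointwise seminorm parts from the $L^2$-weighted parts, since the former are controlled only indirectly, through the $\SSS(\RR^d_+)$-continuity of the transform, rather than through the norm identity itself.
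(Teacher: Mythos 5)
Your proposal is correct and follows exactly the route the paper intends: the paper derives Theorem \ref{Theorem A} in one line from the norm identity (\ref{D2 3.4}) with $\gamma=0$ together with the fact that $\mathcal{J}_{z,0}$ is an isomorphism of $\SSS(\RR^d_+)$ with inverse $\mathcal{J}_{\bar z,0}$, and your argument is simply the careful write-up of that derivation (including the correct handling of the pointwise parts of the seminorms via Proposition \ref{HCt} and of the factors $\prod_l|1-z_l|^{-p_l+k_l}$ by adjusting the parameter $A$).
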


Proposition \ref{HCt} is also valid for the modified fractional
powers of the partial Hankel-Clifford transform. To make this
precise let $d',d''\in\NN$,
$\gamma=(\gamma',\gamma'')\in\overline{\RR^{d'}_+}\times
\overline{\RR^{d''}_+}=\overline{\RR^d_+}$ (for brevity
$d=d'+d''$) and $z'=(z_1,\ldots,z_{d'})\in\mathbf{T}^{(d')}$.
Denote by $\mathcal{J}^{d'}_{z',\gamma'}$ the modified fractional
power of the Hankel-Clifford transform on $\RR^{d'}_+$ and by
$\mathrm{Id}^{d''}$ the identity operator
$\SSS(\RR^{d''}_+)\rightarrow \SSS(\RR^{d''}_+)$. Theorem
\ref{kerth} and Proposition \ref{HCt} imply that
$\mathcal{J}^{d'}_{z',\gamma'}\hat{\otimes}\mathrm{Id}^{d''}$ is a
topological isomorphism on $\SSS(\RR^d_+)$ (it follows that
$\mathcal{J}^{d'}_{z',\gamma'}\hat{\otimes}\mathrm{Id}^{d''}$ is
an injection from \cite[Theorem 5, p. 277]{kothe2} and a
homomorphism from \cite[Theorem 7, p. 189]{kothe2}; note
$\SSS(\RR^d_+)$ is nuclear). We denote by $x\in\RR^d_+$
$x=(x',x'')$ where $x'=(x_1,\ldots,x_{d'})$ and
$x''=(x_{d'+1},\ldots,x_{d})$. Let $f\in\SSS(\RR^d_+)$. Define
\beas \mathcal{J}^{(d')}_{z',\gamma'}f(t)=
\left(\prod_{l=1}^{d'}(1-z_l)^{-1}\right)\int_{\RR^{d'}_+}f(x',t'')\prod_{l=1}^{d'}
(x_lt_lz_l)^{-\gamma_l/2}x_l^{\gamma_l}I_{\gamma_l}
\left(\frac{2\sqrt{x_lt_lz_l}}{1-z_l}\right)dx'. \eeas By the same
technique already described for the absolute convergence of
$\mathcal{J}_{z,\gamma}$, one proves that
$\mathcal{J}^{(d')}_{z',\gamma'}
f\in\mathcal{C}_{L^{\infty}}(\overline{\RR^d_+})$. When
$f_j\rightarrow f$ in $\SSS(\RR^d_+)$,
$\mathcal{J}^{(d')}_{z',\gamma'}f_j\rightarrow
\mathcal{J}^{(d')}_{z',\gamma'}f$ in
$\mathcal{C}_{L^{\infty}}(\overline{\RR^d_+})$. Since
$\mathcal{J}^{(d')}_{z',\gamma'}f(t)=\mathcal{J}^{d'}_{z',\gamma'}\hat{\otimes}
\mathrm{Id}^{d''}f(t)$ for
$f\in\SSS(\RR^{d'}_+)\otimes\SSS(\RR^{d''}_+)$, we accomplish the
same for all $f\in\SSS(\RR^d_+)$. Hence, the first part of the
next proposition follows.

\begin{prop}\label{parHC}
The modified fractional power of the partial Hankel-Clifford transform $\mathcal{J}^{(d')}_{z',\gamma'}$ is a topological isomorphism on $\SSS(\RR^d_+)$.\\
\indent Moreover, $\mathcal{J}^{(d')}_{z',\gamma'}$ extends to an
isometry from $L^2(\RR^d_+,t^{\gamma}dt)$ onto itself with ab
inverse $\mathcal{J}^{(d')}_{\bar{z'},\gamma'}$.
For all $(p',p''),(k',k'')\in\NN^{d'}_0\times\NN^{d''}_0=\NN^d_0$ and all $f\in\SSS(\RR^d_+)$\\
\\
$\left\|t'^{(p'+k'+\gamma')/2}t''^{(p''+k'')/2}D^{p}_{t}f(t)\right\|_2
$ \beas =\left(\prod_{l=1}^{d'}|1-z_l|^{-p_l+k_l}\right)
\left\|t'^{(p'+k'+\gamma')/2}t''^{(p''+k'')/2}D^{k'}_{t'}D^{p''}_{t''}
\mathcal{J}^{(d')}_{z',\gamma'}f(t)\right\|_2. \eeas
\end{prop}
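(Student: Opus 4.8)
The first assertion is already contained in the discussion preceding the statement, so the plan is to establish the $L^2$-isometry together with its surjectivity, and then the norm identity. Throughout I write $t=(t',t'')$, $p=(p',p'')$, $n=(n',n'')$ and so on, with primed multi-indices in $\NN^{d'}_0$ and doubly primed ones in $\NN^{d''}_0$, and I use the Fubini identification $L^2(\RR^d_+,t^{\gamma}dt)\cong L^2(\RR^{d'}_+,t'^{\gamma'}dt')\,\hat{\otimes}\,L^2(\RR^{d''}_+,t''^{\gamma''}dt'')$, under which $\mathcal{J}^{(d')}_{z',\gamma'}$ corresponds to $\mathcal{J}^{d'}_{z',\gamma'}\,\hat{\otimes}\,\mathrm{Id}^{d''}$.

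For the isometry I would reproduce the scheme already used for the full transform $\mathcal{J}_{z,\gamma}$. From the product structure $\mathcal{L}^{\gamma}_n=\mathcal{L}^{\gamma'}_{n'}\mathcal{L}^{\gamma''}_{n''}$ one obtains $\mathcal{J}^{(d')}_{z',\gamma'}\mathcal{L}^{\gamma}_n=(\mathcal{J}^{d'}_{z',\gamma'}\mathcal{L}^{\gamma'}_{n'})\,\mathcal{L}^{\gamma''}_{n''}$; since $\mathcal{J}^{d'}_{z',\gamma'}$ is an isometry of $L^2(\RR^{d'}_+,t'^{\gamma'}dt')$ (Proposition \ref{HCt}) it carries the orthonormal basis $\{\mathcal{L}^{\gamma'}_{n'}\}_{n'}$ to an orthonormal system, and combining this with the orthonormal basis $\{\mathcal{L}^{\gamma''}_{n''}\}_{n''}$ of $L^2(\RR^{d''}_+,t''^{\gamma''}dt'')$ shows that $\{\mathcal{J}^{(d')}_{z',\gamma'}\mathcal{L}^{\gamma}_n\}_n$ is orthonormal in $L^2(\RR^d_+,t^{\gamma}dt)$. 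Consequently $\mathcal{J}^{(d')}_{z',\gamma'}$ preserves the $L^2(t^{\gamma})$-norm on the dense subspace $V^{(d)}_{\gamma}$ of finite Laguerre combinations and therefore extends to an isometry of the whole space; equivalently, one may simply invoke that the Hilbert tensor product of the isometry $\mathcal{J}^{d'}_{z',\gamma'}$ with $\mathrm{Id}^{d''}$ is again an isometry. Surjectivity then follows from $\mathcal{J}^{(d')}_{\bar{z'},\gamma'}\mathcal{J}^{(d')}_{z',\gamma'}\mathcal{L}^{\gamma}_n=\mathcal{L}^{\gamma}_n$, the fibrewise form of the inversion already recorded for $\mathcal{J}_{z,\gamma}$, which exhibits $\mathcal{J}^{(d')}_{\bar{z'},\gamma'}$ as a two-sided inverse on a dense set and hence everywhere.

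For the norm identity I would reduce to the $d'$-dimensional identity (\ref{D2 3.4}) applied fibrewise in $t''$. First, because the integral kernel of $\mathcal{J}^{(d')}_{z',\gamma'}$ does not depend on $t''$ and $f\in\SSS(\RR^d_+)$ is rapidly decreasing, differentiation under the integral sign yields the commutation $D^{p''}_{t''}\mathcal{J}^{(d')}_{z',\gamma'}f=\mathcal{J}^{(d')}_{z',\gamma'}D^{p''}_{t''}f$. Next, for each fixed $t''$ the section $x'\mapsto D^{p''}_{t''}f(x',t'')$ lies in $\SSS(\RR^{d'}_+)$, so (\ref{D2 3.4}) in dimension $d'$, applied with orders $p',k'$ and weight $\gamma'$, gives
\beas
\left\|t'^{(p'+k'+\gamma')/2}D^{p'}_{t'}D^{p''}_{t''}f(t',t'')\right\|_{L^2(\RR^{d'}_+)}=\left(\prod_{l=1}^{d'}|1-z_l|^{-p_l+k_l}\right)\left\|t'^{(p'+k'+\gamma')/2}D^{k'}_{t'}\mathcal{J}^{d'}_{z',\gamma'}D^{p''}_{t''}f(t',t'')\right\|_{L^2(\RR^{d'}_+)}.
\eeas
Squaring, multiplying by the common weight $t''^{p''+k''}$ (which is untouched by the transform), integrating over $t''\in\RR^{d''}_+$ and invoking Tonelli to recombine the iterated integrals into $\|\cdot\|_2$ over $\RR^d_+$, and finally rewriting $\mathcal{J}^{d'}_{z',\gamma'}D^{p''}_{t''}f$ on the right as $D^{p''}_{t''}\mathcal{J}^{(d')}_{z',\gamma'}f$ via the commutation, one arrives at the stated equality after taking square roots.

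The product formula for $\mathcal{L}^{\gamma}_n$ and the density and orthonormal-basis bookkeeping are routine. The step that will need the most care is the fibrewise reduction: I must justify the differentiation under the integral that secures the commutation with $D^{p''}_{t''}$, confirm that the $t''$-sections are genuinely Schwartz in $x'$ so that (\ref{D2 3.4}) is available for each fixed $t''$, and verify the Tonelli interchange identifying the iterated $L^2$-norms with the full $L^2(\RR^d_+)$-norm—all of which rest on the rapid decay encoded in $\SSS(\RR^d_+)$ but should be stated explicitly rather than taken for granted.
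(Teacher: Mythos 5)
Your proof is correct and follows the same overall architecture as the paper's: the topological isomorphism on $\SSS(\RR^d_+)$ is taken from the preceding tensor-product discussion, the isometry and its inverse $\mathcal{J}^{(d')}_{\bar{z'},\gamma'}$ come from the action on the Laguerre basis plus density, and the norm identity is reduced to the $d'$-dimensional statement via the commutation $D^{p''}_{t''}\mathcal{J}^{(d')}_{z',\gamma'}f=\mathcal{J}^{(d')}_{z',\gamma'}D^{p''}_{t''}f$. You differ from the paper in two technical choices. For the weighted-norm identity the paper first records the intermediate equality (\ref{eqinttt}) (with only $t'$-derivatives on the left) ``as in the proof of Duran's Lemma 3.2 $iii)$'' and then substitutes $D^{p''}_{t''}f$ for $f$; you instead slice in $t''$, apply the already-established identity (\ref{D2 3.4}) in dimension $d'$ to each section $x'\mapsto D^{p''}_{t''}f(x',t'')$, and reassemble with Tonelli --- this is somewhat more self-contained, since it reuses a proved statement rather than re-deriving it from the external reference. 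For the commutation, the paper verifies it on $\SSS(\RR^{d'}_+)\otimes\SSS(\RR^{d''}_+)$ and extends by density and continuity of both sides, while you differentiate under the integral sign using the boundedness of the kernel (cf. (\ref{equfb11})) and the rapid decay of $f$; both are legitimate, and your closing caveats correctly single out the points (domination for the differentiation, membership of the $t''$-sections in $\SSS(\RR^{d'}_+)$, and the Tonelli interchange) that must be written out for the argument to be complete.
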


\begin{proof} The proof that $\mathcal{J}^{(d')}_{z',\gamma'}$ extends to an isometry from $L^2(\RR^d_+,t^{\gamma}dt)$ onto itself with an inverse
$\mathcal{J}^{(d')}_{\bar{z'},\gamma'}$ is the same as for
$\mathcal{J}_{z,\gamma}$
given above. As in the proof of \cite[Lemma 3.2 $iii)$]{D2}, one obtains for $f\in\SSS(\RR^d_+)$\\
\\
$\left\|t'^{(p'+k'+\gamma')/2}t''^{(p''+k'')/2}D^{p'}_{t'}f(t)\right\|_2$
\bea\label{eqinttt}
=\left(\prod_{l=1}^{d'}|1-z_l|^{-p_l+k_l}\right)
\left\|t'^{(p'+k'+\gamma')/2}t''^{(p''+k'')/2}D^{k'}_{t'}
\mathcal{J}^{(d')}_{z',\gamma'}f(t)\right\|_2. \eea

\noindent Clearly, for
$f\in\SSS(\RR^{d'}_+)\otimes\SSS(\RR^{d''}_+)$, $D^{p''}_{t''}
\mathcal{J}^{(d')}_{z',\gamma'}f=\mathcal{J}^{(d')}_{z',\gamma'}D^{p''}_{t''}
f$. Hence, the same holds for $f\in\SSS(\RR^d_+)$ and the equality
follows from (\ref{eqinttt}). \qed
\end{proof}

If $\Lambda'=\{\lambda'_1,\ldots,\lambda'_{d'}\}\subseteq
\{1,\ldots,d\}$ and
$\Lambda''=\{\lambda''_1,\ldots,\lambda''_{d''}\}=\{1,\ldots,d\}\backslash
\Lambda'$ one can also consider the modified fractional power of
the partial Hankel-Clifford transform with respect to
$x_{\Lambda'}=(x_{\lambda'_1},\ldots,x_{\lambda'_{d'}})$ defined
by (here
$x_{\Lambda''}=(x_{\lambda''_1},\ldots,x_{\lambda''_{d''}})$ and
abusing the notation we write $x=(x_{\Lambda'},x_{\Lambda''})$)
$$\mathcal{J}^{(\Lambda')}_{z',\gamma_{\Lambda'}}f(t)=
\left(\prod_{l=1}^{d'}(1-z_l)^{-1}\right)
\int_{\RR^{d'}_+}f(x_{\Lambda'},t_{\Lambda''}) \prod_{l=1}^{d'}
(x_{\lambda'_l}t_{\lambda'_l}z_l)^{-\gamma_{\lambda'_l}/2}
x_{\lambda'_l}^{\gamma_{\lambda'_l}}I_{\gamma_{\lambda'_l}}
\left(\frac{2\sqrt{x_{\lambda'_l}t_{\lambda'_l}z_l}}{1-z_l}\right)
dx_{\Lambda'}.$$

\begin{cor}\label{corforthepp}
Using the same notations as above,
$\mathcal{J}^{(\Lambda')}_{z',\gamma_{\Lambda'}}$ is a topological
isomorphism on $\SSS(\RR^d_+)$ and it extends to an isometry from
$L^2(\RR^d_+,t^{\gamma}dt)$ onto itself with an inverse
$\mathcal{J}^{(\Lambda')}_{\bar{z'},\gamma_{\Lambda'}}$. For all $f\in\SSS(\RR^d_+)$ and all $(p_{\Lambda'},p_{\Lambda''}),(k_{\Lambda'},k_{\Lambda''})\in\NN^d_0$\\
\\
$\left\|t_{\Lambda'}^{(p_{\Lambda'}+k_{\Lambda'}+\gamma_{\Lambda'})/2}
t_{\Lambda''}^{(p_{\Lambda''}+k_{\Lambda''})/2}D^p_tf(t)\right\|_2
$ \bea\label{forrandomin}
=\left(\prod_{l=1}^{d'}|1-z_l|^{-p_{\lambda'_l}+k_{\lambda'_l}}\right)
\left\|t_{\Lambda'}^{(p_{\Lambda'}+k_{\Lambda'}+\gamma_{\Lambda'})/2}
t_{\Lambda''}^{(p_{\Lambda''}+k_{\Lambda''})/2}
D^{k_{\Lambda'}}_{t_{\Lambda'}} D^{p_{\Lambda''}}_{t_{\Lambda''}}
\mathcal{J}^{(\Lambda')}_{z',\gamma_{\Lambda'}}f(t)\right\|_2.
\eea
\end{cor}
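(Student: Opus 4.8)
The plan is to reduce the statement to Proposition \ref{parHC} by permuting the coordinates so that the index set $\Lambda'$ is moved to the first $d'$ positions. To this end I would fix the permutation $\sigma$ of $\{1,\ldots,d\}$ with $\sigma(l)=\lambda'_l$ for $1\leq l\leq d'$ and $\sigma(d'+l)=\lambda''_l$ for $1\leq l\leq d''$, so that $\sigma$ carries $\{1,\ldots,d'\}$ onto $\Lambda'$ and $\{d'+1,\ldots,d\}$ onto $\Lambda''$. I then introduce the relabelling operator $R_{\sigma}f(x)=f(x_{\sigma(1)},\ldots,x_{\sigma(d)})$, whose inverse is $R_{\sigma^{-1}}$. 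Since $R_{\sigma}$ only permutes the variables and the defining seminorms of $\SSS(\RR^d_+)$ are symmetric in the coordinates, $R_{\sigma}$ is a topological isomorphism on $\SSS(\RR^d_+)$; and because the associated change of variables has Jacobian $1$, $R_{\sigma}$ is an isometry between the weighted spaces $L^2(\RR^d_+,t^{\gamma}dt)$, carrying the weight exponents $\gamma$ along with the coordinates.

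First I would establish the conjugation identity
\beas
\mathcal{J}^{(\Lambda')}_{z',\gamma_{\Lambda'}}=R_{\sigma}\circ\mathcal{J}^{(d')}_{z',\gamma'}\circ R_{\sigma^{-1}},
\eeas
where $\gamma'=(\gamma_{\lambda'_1},\ldots,\gamma_{\lambda'_{d'}})$ is $\gamma_{\Lambda'}$ listed in the first $d'$ slots. This is a direct consequence of the substitution in the defining integral: applying $R_{\sigma^{-1}}$ first brings the $\Lambda'$-variables into the leading $d'$ positions, where $\mathcal{J}^{(d')}_{z',\gamma'}$ integrates them against exactly the Bessel kernel appearing in the definition of $\mathcal{J}^{(\Lambda')}_{z',\gamma_{\Lambda'}}$ (both the factor $\prod_{l=1}^{d'}(1-z_l)^{-1}$ and the kernel depend only on those $d'$ coordinates), and $R_{\sigma}$ then returns the variables to their original labelling; tracking the arguments through $\sigma^{-1}$ shows that $f$ is evaluated precisely at $(x_{\Lambda'},t_{\Lambda''})$, as required.

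Granting the conjugation identity, every assertion of the corollary follows by transport of structure. The composition of the three isomorphisms $R_{\sigma}$, $\mathcal{J}^{(d')}_{z',\gamma'}$ (an isomorphism by Proposition \ref{parHC}) and $R_{\sigma^{-1}}$ is a topological isomorphism on $\SSS(\RR^d_+)$; conjugating the $L^2$-isometry and its inverse $\mathcal{J}^{(d')}_{\bar{z'},\gamma'}$ by the isometries $R_{\sigma^{\pm1}}$ shows that $\mathcal{J}^{(\Lambda')}_{z',\gamma_{\Lambda'}}$ is an isometry of $L^2(\RR^d_+,t^{\gamma}dt)$ onto itself with inverse $\mathcal{J}^{(\Lambda')}_{\bar{z'},\gamma_{\Lambda'}}$. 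Finally, to obtain (\ref{forrandomin}) I would substitute $f=R_{\sigma^{-1}}g$ into the identity of Proposition \ref{parHC} and relabel the multi-indices through $\sigma$: the weight $t'^{(p'+k'+\gamma')/2}t''^{(p''+k'')/2}$ becomes $t_{\Lambda'}^{(p_{\Lambda'}+k_{\Lambda'}+\gamma_{\Lambda'})/2}t_{\Lambda''}^{(p_{\Lambda''}+k_{\Lambda''})/2}$, the derivatives $D^{p'}_{t'}$ and $D^{k'}_{t'}D^{p''}_{t''}$ become $D^p_t$ and $D^{k_{\Lambda'}}_{t_{\Lambda'}}D^{p_{\Lambda''}}_{t_{\Lambda''}}$, the constant $\prod_{l=1}^{d'}|1-z_l|^{-p_l+k_l}$ becomes $\prod_{l=1}^{d'}|1-z_l|^{-p_{\lambda'_l}+k_{\lambda'_l}}$, and the $L^2$-norms are unchanged because $R_{\sigma}$ is an isometry. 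No step is genuinely hard; the main obstacle is the careful and consistent bookkeeping in the conjugation identity and in the relabelling of indices, so that the weight $\gamma$, the derivatives and the constants all transform compatibly.
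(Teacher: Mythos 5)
Your proposal is correct and coincides with the paper's own argument: the paper also conjugates $\mathcal{J}^{(d')}_{z',\nu'}$ by the coordinate permutation (there written as $\tilde{\Theta}f=f\circ\Theta$, which is your $R_{\sigma^{-1}}$), notes that this permutation is a topological isomorphism of $\SSS(\RR^d_+)$ and an isometry between the correspondingly permuted weighted $L^2$-spaces, and deduces everything from Proposition \ref{parHC} via the identity $\mathcal{J}^{(\Lambda')}_{z',\gamma_{\Lambda'}}=\tilde{\Theta}^{-1}\mathcal{J}^{(d')}_{z',\nu'}\tilde{\Theta}$. The index bookkeeping you describe is exactly what the paper leaves to the reader.
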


\begin{proof} Let $\Theta:\RR^d\rightarrow\RR^d$ be the orthogonal transformation given by $\Theta(x)=y$, where
$y_{\lambda'_1}=x_1,\ldots,y_{\lambda'_{d'}}=x_{d'},
y_{\lambda''_1}=x_{d'+1},\ldots,y_{\lambda''_{d''}}=x_d$. Observe
that $\Theta$ maps $\RR^d_+$ and $\overline{\RR^d_+}$ bijectively
onto themselves. Let $\tilde{\Theta}$ be the mapping $f\mapsto
f\circ\Theta$, $L^2(\RR^d_+)\rightarrow L^2(\RR^d_+)$. One easily
verifies that for each $\mu\in\overline{\RR^d_+}$ it is an
isometry from $L^2(\RR^d_+,t^{\mu}dt)$ onto
$L^2(\RR^d_+,t^{\Theta^{-1}\mu}dt)$ and a topological isomorphism
on $\SSS(\RR^d_+)$. Its inverse is
$\tilde{\Theta}^{-1}f=f\circ\Theta^{-1}$. Let
$\nu'=(\gamma_{\lambda'_1},\ldots,\gamma_{\lambda'_{d'}})\in\overline{\RR^{d'}_+}$.
The corollary follows from Proposition \ref{parHC} and the fact
that $\mathcal{J}^{(\Lambda')}_{z',\gamma_{\Lambda'}}f
=\tilde{\Theta}^{-1}\mathcal{J}^{(d')}_{z',\nu'} \tilde{\Theta}f$.
\qed
\end{proof}

\begin{rem}\label{partHCrem}
Observe that if $\Lambda'=\emptyset$ then
$\mathcal{J}^{(\Lambda')}_{z',\gamma_{\Lambda'}}=\mathrm{Id}$ and
when
$\Lambda'=\{1,\ldots,d\}$, $\mathcal{J}^{(\Lambda')}_{z',\gamma_{\Lambda'}}$ is just $\mathcal{J}_{z,\gamma}$.\\
\indent Let $z'=-\mathbf{1}\in\mathbf{T}^{(d')}$ in
$\mathcal{J}^{(\Lambda')}_{z',\gamma_{\Lambda'}}$ we obtain the
partial Hankel-Clifford transform with respect to
$x_{\Lambda'}=(x_{\lambda'_1},\ldots,x_{\lambda'_{d'}})$ denoted
by $\mathcal{H}^{(\Lambda')}_{\gamma_{\Lambda'}}$.
\end{rem}

As a direct consequence of Corollary \ref{corforthepp} we have the
following result.

\begin{cor}\label{lllkk}
$\mathcal{J}^{(\Lambda')}_{z',0}$ is a topological isomorphism on
$G^{\alpha}_{\alpha}(\RR^d_+)$ with an inverse
$\mathcal{J}^{(\Lambda')}_{\bar{z'},0}$. In particular,
$\mathcal{H}^{(\Lambda')}_{0}$ is a self-inverse topological
isomorphism on $G^{\alpha}_{\alpha}(\RR^d_+)$.
\end{cor}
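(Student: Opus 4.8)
The plan is to deduce everything from the $L^2$-identity (\ref{forrandomin}) of Corollary \ref{corforthepp} specialised to $\gamma_{\Lambda'}=0$, fed into the defining seminorms of $G^\alpha_\alpha(\RR^d_+)$. Because $G^\alpha_\alpha(\RR^d_+)$ is the inductive limit of the $(F)$-spaces $G^{\alpha,A}_{\alpha,A}(\RR^d_+)$, it suffices to show that for every $A>0$ there is $A'>0$ so that $\mathcal{J}^{(\Lambda')}_{z',0}$ maps $G^{\alpha,A}_{\alpha,A}(\RR^d_+)$ continuously into $G^{\alpha,A'}_{\alpha,A'}(\RR^d_+)$. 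So first I would fix such an $f$, write $g=\mathcal{J}^{(\Lambda')}_{z',0}f$, and for arbitrary derivative and weight multi-indices $q=(q_{\Lambda'},q_{\Lambda''})$, $r=(r_{\Lambda'},r_{\Lambda''})$ read off $\|t^{(q+r)/2}D^q g\|_2$ from (\ref{forrandomin}). Matching the right-hand side forces the substitution $p=(r_{\Lambda'},q_{\Lambda''})$, $k=(q_{\Lambda'},r_{\Lambda''})$: on the $\Lambda''$ block nothing moves, while on $\Lambda'$ the derivative order and the extra-multiplication order are interchanged. With this choice $t^{(p+k)/2}=t^{(q+r)/2}$ and (\ref{forrandomin}) becomes
\[
\|t^{(q+r)/2}D^q g\|_2=\Big(\prod_{l=1}^{d'}|1-z_l|^{r_{\lambda'_l}-q_{\lambda'_l}}\Big)\,\|t^{(p+k)/2}D^p f\|_2.
\]

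The conceptual heart of the argument, and the reason $\beta=\alpha$ is indispensable, appears next. Estimating the right factor by the $G^{\alpha,A}_{\alpha,A}$-seminorm $M$ of $f$ gives the bound $M\,A^{|p+k|}k^{(\alpha/2)k}p^{(\alpha/2)p}$, and since the derivative weight and the multiplication weight carry the \emph{same} exponent $\alpha/2$, the interchange on $\Lambda'$ leaves their product invariant:
\[
A^{|p+k|}=A^{|q+r|},\qquad k^{(\alpha/2)k}p^{(\alpha/2)p}=q^{(\alpha/2)q}r^{(\alpha/2)r}.
\]
Putting $B=\max_{1\le l\le d'}\max\{|1-z_l|,|1-z_l|^{-1}\}$, which is finite and $\ge1$ because $z'\in\mathbf{T}^{(d')}$ forces each $|1-z_l|\in(0,2]$, the product of the $|1-z_l|$-factors is at most $B^{|q+r|}$. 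Hence
\[
\frac{\|t^{(q+r)/2}D^q g\|_2}{(A')^{|q+r|}q^{(\alpha/2)q}r^{(\alpha/2)r}}\le M\Big(\frac{AB}{A'}\Big)^{|q+r|},
\]
so the choice $A'=AB$ makes the supremum over $q,r$ at most $M$. The remaining sup-type part of the seminorm $\sigma_{A',j}$ is a $\SSS(\RR^d_+)$-seminorm, and is controlled because $\mathcal{J}^{(\Lambda')}_{z',0}$ is continuous on $\SSS(\RR^d_+)$ (Corollary \ref{corforthepp}) and $G^{\alpha,A}_{\alpha,A}(\RR^d_+)\hookrightarrow\SSS(\RR^d_+)$ continuously. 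Thus $\mathcal{J}^{(\Lambda')}_{z',0}\colon G^{\alpha,A}_{\alpha,A}(\RR^d_+)\to G^{\alpha,AB}_{\alpha,AB}(\RR^d_+)$ is continuous, and passing to the inductive limit yields continuity on $G^\alpha_\alpha(\RR^d_+)$.

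Finally, running the identical estimate for $\mathcal{J}^{(\Lambda')}_{\bar{z'},0}$ — the constant $B$ is unchanged since $|1-\bar{z_l}|=|1-z_l|$ — shows it is continuous on $G^\alpha_\alpha(\RR^d_+)$ as well. By Corollary \ref{corforthepp} these two operators are mutually inverse on $\SSS(\RR^d_+)$, hence on the subspace $G^\alpha_\alpha(\RR^d_+)$, so $\mathcal{J}^{(\Lambda')}_{z',0}$ is a topological isomorphism with inverse $\mathcal{J}^{(\Lambda')}_{\bar{z'},0}$. For the last assertion, Remark \ref{partHCrem} gives $\mathcal{H}^{(\Lambda')}_0=\mathcal{J}^{(\Lambda')}_{-\mathbf{1},0}$, and since $\overline{-\mathbf{1}}=-\mathbf{1}$ the inverse coincides with the operator itself, i.e. $\mathcal{H}^{(\Lambda')}_0$ is self-inverse. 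I expect the only genuinely delicate point to be the index bookkeeping of the swap in (\ref{forrandomin}); once that substitution is pinned down, the equality $\beta=\alpha$ is precisely what renders the weight product invariant, and the rest is routine.
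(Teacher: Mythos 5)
Your proof is correct and follows exactly the route the paper intends: the corollary is stated there as a direct consequence of Corollary \ref{corforthepp}, and your argument simply makes explicit the index swap in (\ref{forrandomin}) with $\gamma_{\Lambda'}=0$, the invariance of $A^{|p+k|}k^{(\alpha/2)k}p^{(\alpha/2)p}$ under that swap (which is where $\beta=\alpha$ enters), and the absorption of the $|1-z_l|$-factors into the constant $A'=AB$. The bookkeeping of the substitution $p=(r_{\Lambda'},q_{\Lambda''})$, $k=(q_{\Lambda'},r_{\Lambda''})$ and the treatment of the inverse and of $\mathcal{H}^{(\Lambda')}_0$ are all accurate.
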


 \section{Fourier-Laguerre coefficients in
 $G_\alpha^\alpha(\mathbb{R}^d_+)$, $\alpha\geq 1$}

In this section, we characterise the space
$G_\alpha^\alpha(\mathbb{R}^d_+)$, $\alpha\geq 1$ in terms of the
Fourier-Laguerre coefficients.

\begin{prop}(\cite[Lemma 3.1]{D1}, for d=1)\label{D Lemma3.1}
Let $f\in L^2(\mathbb{R}^d_+)$ and $a_n=\int_{\mathbb{R}^d_+}f(t)
\mathcal{L}_{n}(t)dt$, $ n\in\mathbb{N}^d_0.$ If there exist
constants $c>0$ and $a>1$ such that

\begin{equation}\label{D 3.1}
|a_n|\leq ca^{-|n|^{1/\alpha}},\;n\in\mathbb{N}_0^d,
\end{equation}

\noindent then $f\in G_\alpha^\alpha(\mathbb{R}^d_+)$, $\alpha\geq
1$.
\end{prop}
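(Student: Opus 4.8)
The plan is to show that the Laguerre series $\sum_{n}a_n\mathcal{L}_n$ converges, for a suitable $A>0$, in the $(F)$-space $G^{\alpha,A}_{\alpha,A}(\RR^d_+)$. Since this space is continuously injected into $\SSS(\RR^d_+)$ and hence into $L^2(\RR^d_+)$, the sum of the series must coincide with the given $f$, and therefore $f\in G^\alpha_\alpha(\RR^d_+)$. Thus the whole matter reduces to estimating the numerators $\|t^{(p+k)/2}D^p\mathcal{L}_n\|_2$ and summing them against the decay \eqref{D 3.1}. Because $\mathcal{L}_n(t)=\prod_{l=1}^d\mathcal{L}_{n_l}(t_l)$ and $t^{(p+k)/2}D^p=\prod_{l=1}^d t_l^{(p_l+k_l)/2}D_{t_l}^{p_l}$, the $L^2$-norm factors into a product of one-dimensional norms, so I would first prove everything for $d=1$ and then take products.

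The heart of the argument is the sharp one-dimensional estimate
\[
\|t^{(p+k)/2}D^p\mathcal{L}_n\|_{L^2(\RR_+)}\leq C\,B^{p+k}\,(n+p+k+1)^{(p+k)/2},
\]
with $C,B$ independent of $p,k,n$. The exponent must be symmetric in $p$ and $k$, as dictated by \eqref{D2 3.4} with $\gamma=0$, $z=-1$ and $\mathcal{H}_0\mathcal{L}_n=(-1)^{|n|}\mathcal{L}_n$, which gives $2^p\|t^{(p+k)/2}D^p\mathcal{L}_n\|_2=2^k\|t^{(p+k)/2}D^k\mathcal{L}_n\|_2$; this lets me assume $p\leq k$. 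I would then use the Euler-operator identity $t^pD^p=\prod_{i=0}^{p-1}(tD-i)$ to write $t^{(p+k)/2}D^p\mathcal{L}_n=t^{(k-p)/2}\prod_{i=0}^{p-1}(tD-i)\mathcal{L}_n$. The two operators $t$ and $tD$ act tridiagonally on the orthonormal basis $\{\mathcal{L}_m\}$, namely $t\mathcal{L}_m=-(m+1)\mathcal{L}_{m+1}+(2m+1)\mathcal{L}_m-m\mathcal{L}_{m-1}$ and $tD\mathcal{L}_m=\tfrac{m+1}{2}\mathcal{L}_{m+1}-\tfrac12\mathcal{L}_m-\tfrac m2\mathcal{L}_{m-1}$, with coefficients of size $O(m)$. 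Consequently $\prod_{i=0}^{p-1}(tD-i)\mathcal{L}_n=\sum_{|m-n|\leq p}c_m\mathcal{L}_m$ with $|c_m|\leq(C(n+p))^p$, and the moment bound $\|t^{(k-p)/2}\mathcal{L}_m\|_2=\langle t^{k-p}\mathcal{L}_m,\mathcal{L}_m\rangle^{1/2}\leq(C(m+k))^{(k-p)/2}$ (again from iterating the tridiagonal action of $t$) then yields the displayed estimate after a triangle-inequality summation over the at most $2p+1$ surviving indices $m$.

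With this estimate the summation is straightforward and is exactly where the hypothesis $\alpha\geq1$ is used. Splitting $\sum_n|a_n|\|t^{(p+k)/2}D^p\mathcal{L}_n\|_2$ into $n\geq p+k$ and $n<p+k$, the first part is dominated by its largest term, which by the standard optimisation of $a^{-n^{1/\alpha}}n^{(p+k)/2}$ is of order $B'^{p+k}(p+k)^{(\alpha/2)(p+k)}$, while the second is at most $(\sum_n a^{-n^{1/\alpha}})(3(p+k))^{(p+k)/2}$. In both cases the elementary inequality $(p+k)^{\alpha(p+k)}\leq e^{\alpha(p+k)}p^{\alpha p}k^{\alpha k}$ from the start of Section 3 (applied with exponent $\alpha/2$, and then $p^{p/2}\leq p^{(\alpha/2)p}$ since $\alpha\geq1$) converts the joint factor into $B''^{p+k}p^{(\alpha/2)p}k^{(\alpha/2)k}$. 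Hence $\|t^{(p+k)/2}D^pf\|_2\leq C'B''^{p+k}p^{(\alpha/2)p}k^{(\alpha/2)k}$, so dividing by the $G$-weight and taking $\sup_{p,k}$ is finite as soon as $A\geq B''$; the pointwise part of $\sigma_{A,j}$ is controlled directly by \eqref{estlag333} together with \eqref{D 3.1}, which also justifies the term-by-term differentiation.

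The main obstacle is the sharp estimate with the symmetric exponent $(p+k)/2$ and a purely geometric constant $B^{p+k}$. The naive route through the pointwise bound \eqref{estlag333} produces an exponent of order $p+k$ rather than $(p+k)/2$; this would force the seminorm growth up to $\exp(c(n+1)^{2/\alpha})$, which the decay $a^{-(n+1)^{1/\alpha}}$ cannot absorb when $1\leq\alpha<2$. The delicate points are therefore (i) extracting the factor-of-two saving in the exponent, which is made possible by the Euler-operator factorisation together with the tridiagonality of $t$ and $tD$, and (ii) keeping the combinatorial constants geometric throughout the reduction so that, after the final summation, $\alpha\geq1$ just suffices to absorb the residual factors $p^{p/2}$, $k^{k/2}$ into the required $p^{(\alpha/2)p}k^{(\alpha/2)k}$.
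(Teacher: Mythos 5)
Your proposal is correct, and it reaches the conclusion by a genuinely different route from the paper's. The paper starts from Duran's two extremal $L^2$ estimates (\ref{estforlaged}) and (\ref{estforlagdv}) --- the pure-moment norm $\|t^{p/2}\mathcal{L}_n\|_2$ and the balanced norm $\|t^{p/2}D^p\mathcal{L}_n\|_2$, both already carrying the crucial exponent of order $p/2$ in $n$ --- sums them against (\ref{D 3.1}) to control $\|t^{p/2}\mathcal{H}^{(\Lambda)}_0f\|_2$ and $\|t^{p/2}D^p\mathcal{H}^{(\Lambda)}_0f\|_2$, and only then produces the mixed norms $\|t^{(p+k)/2}D^pf\|_2$ by an integration-by-parts/Cauchy--Schwarz interpolation performed on $f$ itself (valid for $2k\geq p$), finishing with the partial Hankel--Clifford transform (\ref{forrandomin}) on the coordinates where $k_l<p_l/2$. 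You instead prove a single uniform estimate on $\|t^{(p+k)/2}D^p\mathcal{L}_n\|_2$ for all $p,k$ \emph{before} summing: the factor-of-two saving in the exponent comes from the Euler factorisation $t^pD^p=\prod_{i=0}^{p-1}(tD-i)$ together with the tridiagonal action of $t$ and $tD$ on the orthonormal Laguerre basis (your recurrences are the correct ones), and the identity (\ref{D2 3.4}) with $z=-\mathbf{1}$, $\gamma=0$ and $\mathcal{H}_0\mathcal{L}_n=(-1)^{|n|}\mathcal{L}_n$ reduces matters to $p\leq k$ at the cost of a geometric factor. What your route buys is self-containedness: it replaces the imported estimates (2.5)--(2.6) of \cite{D1} and the two-stage interpolation by one algebraic computation on the basis functions, and the Hankel symmetry is applied coordinate-wise to $\mathcal{L}_n$ rather than to $f$ through Corollary \ref{corforthepp}. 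What the paper's route buys is reuse --- (\ref{estforlaged}) is needed again later, e.g.\ in Lemma \ref{jednakost duali}. Your diagnosis that the pointwise bound (\ref{estlag333}) cannot work (exponent $p+k$ instead of $(p+k)/2$, which would only yield membership in $G^{2\alpha}_{2\alpha}$) is exactly right, and your final optimisation and the absorption of $p^{p/2}k^{k/2}$ using $\alpha\geq1$ match the paper's. The only detail worth writing out in full is the uniformity in $(p,k)$ of the tail sums needed for convergence in the $(F)$-space $G^{\alpha,A}_{\alpha,A}(\RR^d_+)$ (split $a^{-n^{1/\alpha}}$ into two factors, one spent on the optimisation and one kept for summability); this is routine and does not affect the validity of the argument.
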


\begin{proof} As $\{a_n\}_{n\in\mathbb{N}_0^d}\in \sss^{\alpha}\subseteq \sss$, it
follows $f\in \SSS(\mathbb{R}^d_+)$ and the series $\sum_n
a_n\mathcal{L}_n$ converges absolutely in $\SSS(\RR^d_+)$ to $f$.
Since $n_1^{1/\alpha}+\ldots+n_d^{1/\alpha}\leq d|n|^{1/\alpha}$,
denoting $\tilde{a}=a^{1/d}>1$, we have
$a^{-|n|^{1/\alpha}}\leq \prod_{l=1}^d \tilde{a}^{-n_l^{1/\alpha}}$.\\
\indent Using the estimates (2.5) and (2.6) given in the proof of
\cite[Lemma 2.1]{D1}, for $p\in\NN^d_0$ we have \bea
\left\|t^{p/2}\mathcal{L}_n(t)\right\|_2&\leq&
2^{|p|+5d}\prod_{l=1}^d (n_l+1)\ldots\left(n_l+\left[\frac{p_l}{2}\right]+2\right),\label{estforlaged}\\
\left\|t^{p/2}D^p\mathcal{L}_n(t)\right\|_2&\leq&
2^{5d}\prod_{l=1}^d
(n_l+1)\ldots\left(n_l+\left[\frac{p_l}{2}\right]+2\right)\label{estforlagdv}
\eea. Let $\Lambda=\{\lambda_1,\ldots,\lambda_{d'}\}\subseteq
\{1,\ldots,d\}$. Since
$\mathcal{H}^{(\Lambda)}_0\mathcal{L}_n=(-1)^{n_{\lambda_1}+\ldots+n_{\lambda_{d'}}}
\mathcal{L}_n$, (\ref{estforlaged}) implies \beas
\left\|t^{p/2}\mathcal{H}^{(\Lambda)}_0f(t)\right\|_2 &\leq&
\sum_{n\in\mathbb{N}_0^d}
|a_n|\left\|t^{p/2}\mathcal{L}_n(t)\right\|_2\\
&\leq& c2^{|p|+5d}\sum_{n\in\NN^d_0}\prod_{l=1}^d\tilde{a}^{-n_l^{1/\alpha}} (n_l+1)\ldots\left(n_l+\left[\frac{p_l}{2}\right]+2\right)\\
&\leq&c2^{|p|+5d}\prod_{l=1}^d\tilde{a}^{([p_l/2]+2)}\sum_{n\in\mathbb{N}_0^d}\prod_{l=1}^d
\tilde{a}^{-(n_l+[p_l/2]+2)^{1/\alpha}}
\left(n_l+\left[\frac{p_l}{2}\right]+2\right)^{[p_l/2]+2}. \eeas
Let $u>0$, $v>1$. Clearly,
$\rho_{u,v}(x)=v^{-(x+u)^{1/\alpha}}(x+u)^u$, $x\in(-u,+\infty)$
attains its maximum at $x=(\alpha u/\ln v)^{\alpha}-u$. This
implies that there exist $C_1,A_1>0$ such that
\bea\label{estforgro}
\left\|t^{p/2}\mathcal{H}^{(\Lambda)}_0f(t)\right\|_2\leq
C_1A_1^{|p|}p^{(\alpha/2)p},\,\,\, \mbox{for all}\,\,\,
p\in\mathbb{N}_0^d,\,\, \Lambda\subseteq \{1,\ldots,d\}. \eea
Similarly, by using (\ref{estforlagdv}), there exist $C_2,A_2>0$
such that \bea\label{estforder}
\left\|t^{p/2}D^p\mathcal{H}^{(\Lambda)}_0f(t)\right\|_2\leq
C_2A_2^{|p|}p^{(\alpha/2)p},\,\,\, \mbox{for all}\,\,\,
p\in\mathbb{N}_0^d,\,\, \Lambda\subseteq \{1,\ldots,d\}. \eea
Denote by $(\cdot,\cdot)$ the inner product in $L^2(\RR^d_+)$.
Since $\mathcal{H}^{(\Lambda)}_0f\in\SSS(\RR^d_+)$, by integration
by parts one easily verifies that \beas \left(t^{(p+k)/2}
D^p\mathcal{H}^{(\Lambda)}_0f(t),t^{(p+k)/2}
D^p\mathcal{H}^{(\Lambda)}_0f(t)\right)=
\left|\left(D^p\left(t^{p+k}
D^p\mathcal{H}^{(\Lambda)}_0f(t)\right),\mathcal{H}^{(\Lambda)}_0f(t)\right)\right|.
\eeas
Hence, for all $k,p\in\NN^d_0$ such that $2k\geq p$ by (\ref{estforgro}) and (\ref{estforder}), we obtain\\
\\
$\ds \left\|t^{(p+k)/2}
D^p\mathcal{H}^{(\Lambda)}_0f(t)\right\|^2_2$ \beas &\leq&
\sum_{m\leq p} \binom{p}{m}\frac{(p+k)!}{(p+k-m)!}
\left|\left(t^{p+k-m}D^{2p-m}\mathcal{H}^{(\Lambda)}_0f(t),\mathcal{H}^{(\Lambda)}_0f(t)\right)\right|\\
&\leq& 2^{|p|+|k|}\sum_{m\leq p} \binom{p}{m}m!
\left|\left(t^{(2p-m)/2}D^{2p-m}\mathcal{H}^{(\Lambda)}_0f(t),t^{(2k-m)/2}\mathcal{H}^{(\Lambda)}_0f(t)\right)\right|\\
&\leq& C'A'^{|p|+|k|}\sum_{m\leq p} \binom{p}{m}m^{(\alpha/2)
m}m^{(\alpha/2) m}(2p-m)^{(\alpha/2)(2p-m)}
(2k-m)^{(\alpha/2)(2k-m)}\\
&\leq& C'A'^{|p|+|k|}2^{|p|}(2p)^{\alpha p}(2k)^{\alpha k}, \eeas
i.e. there exist $C_3,A_3>0$ such that for all $k,p\in\NN^d_0$
such that $2k\geq p$ and all $\Lambda\subseteq \{1,\ldots,d\}$
\bea\label{eq111}
\left\|t^{(p+k)/2}D^p\mathcal{H}^{(\Lambda)}_0f(t)\right\|_2\leq
C_3 A_3^{|p+k|}p^{(\alpha/2)p}k^{(\alpha/2)k}. \eea Let now
$p,k\in\NN^d_0$ be arbitrary but fixed. Let
$\Lambda'=\{\lambda'_1,\ldots,\lambda'_{d'}\}\subseteq
\{1,\ldots,d\}$ be such that $k_{\lambda'_l}<p_{\lambda'_l}/2$,
$l=1,\ldots,d'$ and
$\Lambda''=\{\lambda''_1,\ldots,\lambda''_{d''}\}=\{1,\ldots,d\}\backslash
\Lambda'$ be such that $k_{\lambda''_l}\geq p_{\lambda''_l}/2$,
$l=1,\ldots,d''$. Then (\ref{forrandomin}) and (\ref{eq111}) imply
\beas \left\|t^{(p+k)/2}D^p_tf(t)\right\|_2\leq 2^{|k|}
\left\|t^{(p+k)/2} D^{k_{\Lambda'}}_{t_{\Lambda'}}
D^{p_{\Lambda''}}_{t_{\Lambda''}}
\mathcal{H}^{(\Lambda')}_0f(t)\right\|_2\leq
C_3(2A_3)^{|p+k|}p^{(\alpha/2)p}k^{(\alpha/2)k}, \eeas i.e. $f\in
G^{\alpha}_{\alpha}(\RR^d_+)$. \qed
\end{proof}

Our next goal is to prove that $f\in
G_\alpha^\alpha(\mathbb{R}^d_+)$
implies (\ref{D 3.1}). We need some preparations.\\
\indent Let $\mathbf{\Pi}=\Pi_1\times...\times\Pi_d$, where
$\Pi_l=\{z_l\in\mathbb{C}|\, \mathrm{Im}\,z_l<0\}$, $l=1,...,d$.
One easily verifies that for each $z=x+iy\in\mathbf{\Pi}$, the
function $t\mapsto e^{-2\pi i zt}$, $\RR^d_+\rightarrow \CC$, is
in $G^{\alpha}_{\alpha}(\RR^d_+)$ (also in $\SSS(\RR^d_+)$). Also,
for $z=x+iy\in\mathbf{\Pi}$, the functions $t\mapsto
D_{x_l}e^{-2\pi i(x+iy)t}=-2\pi i t_l e^{-2\pi i(x+iy)t}$,
$\RR^d_+\rightarrow \CC$ and $t\mapsto D_{y_l} e^{-2\pi
i(x+iy)t}=2\pi t_l e^{-2\pi i(x+iy)t}$, $\RR^d_+\rightarrow \CC$
are in $G^{\alpha}_{\alpha}(\RR^d_+)$ (also in $\SSS(\RR^d_+)$)
for $l=1,\ldots,d$. For the moment, denote by $e_l$,
$l=1,\ldots,d$, the point in $\RR^d$ which all coordinates are $0$
except the $l$-th coordinate which is equal to $1$. By standard
arguments, one proves that for the fixed
$x^{(0)}=(x^{(0)}_1,\ldots, x^{(0)}_d)\in \RR^d$ and
$y^{(0)}=(y^{(0)}_1,\ldots, y^{(0)}_d)\in \RR^d$ with
$y^{(0)}_l<0$, $l=1,\ldots,d$ (i.e. $z^{(0)}=x^{(0)}+iy^{(0)}\in
\mathbf{\Pi}$) we have \beas \left(e^{-2\pi i (x^{(0)}+x_l
e_l+iy^{(0)})t}-e^{-2\pi
i(x^{(0)}+iy^{(0)})t}\right)/x_l&\rightarrow&
-2\pi i t_l e^{-2\pi i(x^{(0)}+iy^{(0)})t},\,\, \mbox{as}\,\, x_l\rightarrow 0\,\, \mbox{and}\\
\left(e^{-2\pi i (x^{(0)}+i(y^{(0)}+y_l e_l))t}-e^{-2\pi
i(x^{(0)}+iy^{(0)})t}\right)/y_l&\rightarrow& 2\pi t_l e^{-2\pi
i(x^{(0)}+iy^{(0)})t},\,\, \mbox{as}\,\, y_l\rightarrow 0 \eeas in
$G^{\alpha,A}_{\alpha,A}(\RR^d_+)$ for some $A>0$ and consequently
in $G^{\alpha}_{\alpha}(\RR^d_+)$ and $\SSS(\RR^d_+)$. Moreover,
\beas -2\pi i t_l e^{-2\pi i(x+iy)t}\rightarrow -2\pi i t_l
e^{-2\pi i(x^{(0)}+iy^{(0)})t}\,\, \mbox{and}\,\, 2\pi t_l
e^{-2\pi i(x+iy)t}\rightarrow 2\pi t_l e^{-2\pi
i(x^{(0)}+iy^{(0)})t} \eeas as $(x,y)\rightarrow
(x^{(0)},y^{(0)})$ in $G^{\alpha,A}_{\alpha,A}(\RR^d_+)$ for some
$A>0$. Hence, the same holds in $G^{\alpha}_{\alpha}(\RR^d_+)$ and
$\SSS(\RR^d_+)$. It follows that for each
$u\in(G^{\alpha}_{\alpha}(\RR^d_+))'$ or $u\in (\SSS(\RR^d_+))'$,
the function $z\mapsto \mathcal{F}_{\mathbf{\Pi}}u(z)=\langle
u(t),e^{-2\pi izt}\rangle$, $\mathbf{\Pi}\rightarrow\CC$, is of
the class $\mathcal{C}^1$;
$D_{x_l}\mathcal{F}_{\mathbf{\Pi}}u(x+iy)= \langle
u(t),D_{x_l}e^{-2\pi i(x+iy)t}\rangle$ and
$D_{y_l}\mathcal{F}_{\mathbf{\Pi}}u(x+iy)= \langle
u(t),D_{y_l}e^{-2\pi i(x+iy)t}\rangle$. Since the Cauchy-Riemann
equations hold for $\mathcal{F}_{\mathbf{\Pi}}u$, it is analytic
on $\mathbf{\Pi}$. Let $\mathbf{D}=D_1\times...\times D_d$, where
$D_l=\{w_l\in\mathbb{C}|\,|w_l|<1\}$, $l=1,...,d$. Observe that
the mapping \beas w\mapsto \Omega(w)=\left((1+w_1)/(4\pi
i(1-w_1)), \ldots,(1+w_d)/(4\pi i(1-w_d))\right) \eeas is a
biholomorphic mapping from $\mathbf{D}$ onto $\mathbf{\Pi}$ with
an inverse \beas z\mapsto \Omega^{-1}(z)=\left((4\pi i
z_1-1)/(4\pi i z_1+1),\ldots, (4\pi i z_d-1)/(4\pi i
z_d+1)\right). \eeas Thus, we have the following result.

\begin{lem}\label{analyticitt}
For each $u\in (G^{\alpha}_{\alpha}(\RR^d_+))'$ or $u\in
(\SSS(\RR^d_+))'$, the function \beas
\mathcal{F}_{\mathbf{D}}u(w)=\mathcal{F}_{\mathbf{\Pi}}u(\Omega(w))=\left\langle
u(t),\prod_{l=1}^d
e^{-\frac{1}{2}\frac{1+w_l}{1-w_l}t_l}\right\rangle,\,\,
\mathbf{D}\rightarrow \CC, \eeas is analytic on $\mathbf{D}$, i.e.
$\mathcal{F}_{\mathbf{D}}u\in\mathcal{O}(\mathbf{D})$.
\end{lem}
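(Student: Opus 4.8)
The plan is to deduce analyticity on $\mathbf{D}$ from the analyticity of $\mathcal{F}_{\mathbf{\Pi}}u$ on $\mathbf{\Pi}$, transporting it across the biholomorphism $\Omega$. First I would record the elementary identity that justifies the displayed formula for $\mathcal{F}_{\mathbf{D}}u$: for $w\in\mathbf{D}$ and $z=\Omega(w)$ one has, coordinatewise, $-2\pi i z_l t_l=-\frac12\frac{1+w_l}{1-w_l}t_l$, so that $e^{-2\pi i zt}=\prod_{l=1}^d e^{-\frac12\frac{1+w_l}{1-w_l}t_l}$ and hence $\mathcal{F}_{\mathbf{D}}u(w)=\mathcal{F}_{\mathbf{\Pi}}u(\Omega(w))=\langle u(t),\prod_{l=1}^d e^{-\frac12\frac{1+w_l}{1-w_l}t_l}\rangle$ is exactly the pairing appearing in the statement.

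The analytic content has already been assembled in the paragraph preceding the statement, and the second step is simply to invoke it. For $z=x+iy\in\mathbf{\Pi}$ the difference quotients of $t\mapsto e^{-2\pi i zt}$ in each $x_l$ and each $y_l$ converge, in some $G^{\alpha,A}_{\alpha,A}(\RR^d_+)$ and hence in $G^\alpha_\alpha(\RR^d_+)$ and in $\SSS(\RR^d_+)$, to the functions $-2\pi i t_l e^{-2\pi i zt}$ and $2\pi t_l e^{-2\pi i zt}$; applying the continuous linear functional $u$ therefore shows that $z\mapsto\mathcal{F}_{\mathbf{\Pi}}u(z)$ is of class $\mathcal{C}^1$ with $D_{x_l}\mathcal{F}_{\mathbf{\Pi}}u(z)=\langle u,D_{x_l}e^{-2\pi izt}\rangle$ and $D_{y_l}\mathcal{F}_{\mathbf{\Pi}}u(z)=\langle u,D_{y_l}e^{-2\pi izt}\rangle$. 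Because $D_{x_l}e^{-2\pi izt}=-i\,D_{y_l}e^{-2\pi izt}$ already at the level of the test functions, linearity of $u$ transfers the Cauchy--Riemann relation $D_{x_l}\mathcal{F}_{\mathbf{\Pi}}u=-i\,D_{y_l}\mathcal{F}_{\mathbf{\Pi}}u$ to $\mathcal{F}_{\mathbf{\Pi}}u$ in each variable; a $\mathcal{C}^1$ function satisfying the Cauchy--Riemann equations separately in each coordinate is holomorphic, so $\mathcal{F}_{\mathbf{\Pi}}u\in\mathcal{O}(\mathbf{\Pi})$.

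For the last step I would use that $\Omega$ is a biholomorphism from $\mathbf{D}$ onto $\mathbf{\Pi}$ (with the inverse exhibited before the statement), so that the composite $\mathcal{F}_{\mathbf{D}}u=\mathcal{F}_{\mathbf{\Pi}}u\circ\Omega$ is holomorphic on $\mathbf{D}$, which is the claim. The genuinely nontrivial work---verifying that the difference quotients converge in the strong topology of the test space and running the $\mathcal{C}^1$/Cauchy--Riemann argument---sits in the preceding paragraph, so the proof of the lemma itself is essentially a packaging step. The one point worth flagging is uniformity over the two choices of $u$: since $e^{-2\pi izt}$ lies in $G^\alpha_\alpha(\RR^d_+)$ and this space is continuously injected into $\SSS(\RR^d_+)$, the convergence of the difference quotients holds simultaneously in both test spaces, so a single argument covers both $u\in(G^\alpha_\alpha(\RR^d_+))'$ and $u\in(\SSS(\RR^d_+))'$.
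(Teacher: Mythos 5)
Your proposal is correct and follows exactly the paper's route: the paper proves the lemma entirely in the paragraph preceding its statement (convergence of the difference quotients of $t\mapsto e^{-2\pi izt}$ in $G^{\alpha,A}_{\alpha,A}(\RR^d_+)$, hence $\mathcal{F}_{\mathbf{\Pi}}u\in\mathcal{C}^1$ satisfying the Cauchy--Riemann equations on $\mathbf{\Pi}$, followed by composition with the biholomorphism $\Omega$), and the lemma itself is stated as the immediate conclusion. Your additional check of the coordinate identity $-2\pi i\,\Omega(w)_l t_l=-\tfrac12\tfrac{1+w_l}{1-w_l}t_l$ and the remark on covering both dual spaces at once are consistent with the paper's argument.
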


\begin{prop}(\cite[Proposition 1.1]{D3}, for d=1)\label{Theorem C}
Let $u\in(\SSS(\mathbb{R}_+^d))'$ and $a_n=\langle
u,\mathcal{L}_n\rangle$, $n\in\mathbb{N}^d_0$. Then,
\bea\label{335tt}
\mathcal{F}_\mathbf{D}(u)(w)=\prod_{j=1}^d(1-w_j)\sum_{n\in\mathbb{N}^d_0}a_{n}w^{n},\;w\in
\mathbf{D}. \eea In particular, if $\mathcal{F}_{\mathbf{D}}u=0$
then $u=0$.
\end{prop}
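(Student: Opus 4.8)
The plan is to rewrite the test function $\prod_{l=1}^d e^{-\frac{1}{2}\frac{1+w_l}{1-w_l}t_l}$ occurring in Lemma \ref{analyticitt} as a Laguerre series via the classical generating function, and then to push the functional $u$ inside the sum. Since the order-$0$ normalisation constant equals $1$, we have $\mathcal{L}_n(t)=\prod_{l=1}^d L_{n_l}(t_l)e^{-t_l/2}$, so the one-dimensional generating identity $\sum_{j\geq 0}L_j(t)w^j=(1-w)^{-1}e^{-wt/(1-w)}$ together with the elementary simplification $\frac{1}{2}+\frac{w}{1-w}=\frac{1}{2}\frac{1+w}{1-w}$ yields, for each fixed $w\in\mathbf{D}$ and pointwise in $t$,
$$\prod_{l=1}^d e^{-\frac{1}{2}\frac{1+w_l}{1-w_l}t_l}=\prod_{l=1}^d(1-w_l)\sum_{n\in\NN^d_0}\mathcal{L}_n(t)w^n=\sum_{n\in\NN^d_0}c_n(w)\mathcal{L}_n(t),$$
where $c_n(w)=\big(\prod_{l=1}^d(1-w_l)\big)w^n$.

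The first substantial step is to upgrade this pointwise identity to convergence in $\SSS(\RR^d_+)$, and this is the part I expect to carry the real content, since legitimising the interchange of $u$ with the infinite sum requires convergence in the test-space topology rather than merely pointwise or in $L^2$. For fixed $w\in\mathbf{D}$ we have $|w_l|<1$, so the coefficients $c_n(w)$ decay exponentially in $|n|$; in particular $\{c_n(w)\}_n\in\sss$ and its truncations converge to it in $\sss$. Invoking the topological isomorphism $\iota:\SSS(\RR^d_+)\to\sss$, $\iota(f)=\{a_n(f)\}_n$ (\cite[Theorem 3.1]{Sm}), the series $\sum_n c_n(w)\mathcal{L}_n$ therefore converges in $\SSS(\RR^d_+)$. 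Because one of the defining seminorms of $\SSS(\RR^d_+)$ is $\sup_x|f(x)|$, convergence there forces pointwise convergence, so the $\SSS(\RR^d_+)$-limit must coincide with the pointwise limit $\prod_{l=1}^d e^{-\frac{1}{2}\frac{1+w_l}{1-w_l}t_l}$.

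With the series convergent in $\SSS(\RR^d_+)$, the continuity of $u\in(\SSS(\RR^d_+))'$ permits the interchange, giving
$$\mathcal{F}_{\mathbf{D}}u(w)=\Big\langle u,\sum_n c_n(w)\mathcal{L}_n\Big\rangle=\sum_n c_n(w)\langle u,\mathcal{L}_n\rangle=\prod_{l=1}^d(1-w_l)\sum_{n\in\NN^d_0}a_n w^n,$$
which is (\ref{335tt}). For the final assertion I would argue as follows: by Theorem \ref{razvoj u S+'} one has $\{a_n\}_n\in\sss'$, so the coefficients grow at most polynomially and $\sum_n a_n w^n$ converges on $\mathbf{D}$, defining an analytic function there. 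If $\mathcal{F}_{\mathbf{D}}u\equiv 0$, then since $\prod_{l=1}^d(1-w_l)\neq 0$ on $\mathbf{D}$ the analytic function $\sum_n a_n w^n$ vanishes identically, whence by uniqueness of the Taylor coefficients all $a_n=0$; applying Theorem \ref{razvoj u S+'} once more (the expansion $u=\sum_n a_n\mathcal{L}_n$) gives $u=0$.
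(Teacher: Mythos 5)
Your proof is correct, and it is in substance the transpose of the paper's argument. The paper expands the \emph{distribution}: by Theorem \ref{razvoj u S+'} it writes $u=\sum_n a_n\mathcal{L}_n$ with convergence in $(\SSS(\RR^d_+))'$, applies this to the test function $e^{-2\pi izt}$ for $z\in\mathbf{\Pi}$, evaluates each term by the Laplace-transform formula $\int_0^\infty t^\gamma L_n^\gamma(t)e^{-st}\,dt=\Gamma(n+1+\gamma)(s-1)^n/(n!\,s^{\gamma+n+1})$, and only then pulls the resulting series back to $\mathbf{D}$ via $\Omega$. You instead expand the \emph{test function}: the generating function $\sum_j L_j(t)w^j=(1-w)^{-1}e^{-wt/(1-w)}$ identifies the Laguerre coefficients of $t\mapsto\prod_l e^{-\frac{1}{2}\frac{1+w_l}{1-w_l}t_l}$ directly on $\mathbf{D}$, and the isomorphism $\SSS(\RR^d_+)\cong\sss$ upgrades this to convergence in $\SSS(\RR^d_+)$, after which continuity of $u$ finishes the computation. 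The two classical identities carry the same information (the Laplace-transform formula at $\gamma=0$ says exactly that the Laguerre coefficients of $e^{-st}$ are $(s-1)^n/s^{n+1}$, which is the generating function), so neither route is more powerful; what yours buys is that it works on $\mathbf{D}$ without the detour through $\mathbf{\Pi}$, at the small cost of having to justify convergence of the kernel's Laguerre series in the $\SSS(\RR^d_+)$ topology --- which you handle correctly via $\iota$ and the identification of the $\SSS$-limit with the pointwise limit through the sup-norm seminorm. Your argument for the final injectivity assertion (polynomial growth of $\{a_n\}_n$, analyticity of $\sum_n a_nw^n$, uniqueness of Taylor coefficients, then $u=\sum_n a_n\mathcal{L}_n=0$) is also sound; the paper states this consequence without spelling it out.
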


\begin{proof}
By Theorem \ref{razvoj u S+'},
$u=\sum_{n\in\mathbb{N}^d_0}a_n\mathcal{L}_n$ and the series
converges absolutely in $(\SSS(\mathbb{R}^d_+))'$. As $e^{-2\pi
izt}\in \SSS(\mathbb{R}^d_+)$, $z\in\mathbf{\Pi}$, we obtain
$$\mathcal{F}_{\mathbf{\Pi}}(u)(z)=\sum_{n\in\mathbb{N}^d_0}a_n \int_{\mathbb{R}^d_+}\mathcal{L}_n(t)e^{-2\pi izt}dt,\,\,
z\in\mathbf{\Pi}.$$ Moreover, as (see \cite[p. 191]{Ed})
$$\int_0^\infty t^\gamma
L_n^\gamma(t)e^{-st}dt=\frac{\Gamma(n+1+\gamma)(s-1)^n}{n!s^{\gamma+n+1}},\,\,
\gamma>-1,\,\, \mathrm{Re}\,s>0,$$ \noindent we obtain
$$\mathcal{F}_{\mathbf{\Pi}}(u)(z)=\sum_{n\in\mathbb{N}^d_0}a_n\prod_{j=1}^d\frac{(\frac{1}{2}+2\pi
iz_j-1)^{n_j}}{(\frac{1}{2}+2\pi iz_j)^{n_j+1}},
\,\,z\in\mathbf{\Pi}.$$ By the definition of
$\mathcal{F}_{\mathbf{D}}u$, (\ref{335tt}) follows. \qed
\end{proof}

The next two assertions are already proved in \cite{D1}, Lemma 3.2
and Corollary 3.5, in the case d=1. However, there are subtle gaps
which we improve upon.

\begin{prop}\label{D Lemma3.2}
Let $\alpha\geq 1$ and $\{a_n\}_{n\in\NN^d_0}$ be a sequence of
complex numbers such that $a_n\rightarrow 0$ as $|n|\rightarrow
\infty$. Then
$$F(w)=(\mathbf{1}-w)^{\mathbf{1}}\sum_{n\in\mathbb{N}^d_0}a_nw^{n},\; w\in
\mathbf{D},$$ belongs to $\mathcal{O}(\mathbf D)$. The following
conditions are equivalent:
\begin{enumerate}
\item[$(i)$] There exist constants $C,A>0$ such that
\begin{equation}\label{D 3.5}
|D^pF(w)|\leq CA^{|p|}p^{\alpha p},\quad
p\in\mathbb{N}^d_0\;,w\in\mathbf{D}.
\end{equation}
\item[$(ii)$] There exist constants $c>0$, $a>1$ such that
$|a_n|\leq ca^{-|n|^{1/\alpha}},\;n\in\mathbb{N}^d_0$.
\end{enumerate}
\end{prop}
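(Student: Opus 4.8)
The plan is to dispose of the analyticity claim at once and then attack the two implications by quite different means. Since $a_n\to 0$ the coefficients are bounded, so $\sum_n a_nw^n$ converges locally uniformly on $\mathbf{D}$ and lies in $\mathcal{O}(\mathbf{D})$; multiplying by the polynomial $(\mathbf{1}-w)^{\mathbf{1}}$ preserves holomorphy, hence $F\in\mathcal{O}(\mathbf{D})$.

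For $(ii)\Rightarrow(i)$ I would estimate directly. Writing $G(w)=\sum_n a_nw^n$, so that $F=(\mathbf{1}-w)^{\mathbf{1}}G$, and applying the Leibniz rule, only the $2^d$ multi-indices $q$ with $p-\mathbf{1}\le q\le p$ contribute (each factor $1-w_l$ has vanishing derivatives of order $\ge 2$), and on those terms $|D^{p-q}(\mathbf{1}-w)^{\mathbf{1}}|\le 2^d$ and $\binom{p}{q}\le\prod_l p_l$. It then suffices to bound $|D^qG(w)|$ for $|w_l|<1$. Using $|w^{n-q}|\le 1$, the bound $n!/(n-q)!\le n^q$, and the inequality $a^{-|n|^{1/\alpha}}\le\prod_l\tilde a^{-n_l^{1/\alpha}}$ with $\tilde a=a^{1/d}$ (exactly as in the proof of Proposition \ref{D Lemma3.1}), the sum factorises over coordinates, and each one–dimensional sum $\sum_{n\ge q_l}n^{q_l}\tilde a^{-n^{1/\alpha}}$ is controlled by splitting $\tilde a^{-n^{1/\alpha}}=\tilde a^{-n^{1/\alpha}/2}\tilde a^{-n^{1/\alpha}/2}$ and maximising $n^{q_l}\tilde a^{-n^{1/\alpha}/2}$ via the function $\rho_{u,v}$ already used there. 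This gives $|D^qG(w)|\le CB^{|q|}q^{\alpha q}$, and since $q$ differs from $p$ by at most $\mathbf{1}$ it yields $(i)$.

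The substantive direction is $(i)\Rightarrow(ii)$, which I would carry out in two stages. First I would recover the decay of the Taylor coefficients $b_m=D^mF(0)/m!$ of $F$. Fixing $r<1$ and setting $w_l=re^{i\theta_l}$, Cauchy's formula on the torus gives $b_mr^{|m|}=(2\pi)^{-d}\int_{[0,2\pi]^d}F(re^{i\theta})e^{-im\theta}d\theta$. I would integrate by parts $p_l$ times in each $\theta_l$ (periodicity kills the boundary terms), using $\partial_{\theta_l}=iw_l\partial_{w_l}$, so that $\partial_\theta^p$ is a combination of the $w^j\partial_w^j$, $j\le p$, with Stirling-type coefficients; since $\alpha\ge 1$ these combinatorial factors (of size at most $p^p\le p^{\alpha p}$) together with $(i)$ give $|\partial_\theta^pF(re^{i\theta})|\le C\tilde A^{|p|}p^{\alpha p}$ uniformly in $r<1$. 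Hence $|b_m|r^{|m|}\prod_l|m_l|^{p_l}\le C\tilde A^{|p|}p^{\alpha p}$; optimising $p_l$ coordinatewise, letting $r\to 1$, and using $\sum_l m_l^{1/\alpha}\ge|m|^{1/\alpha}$, I obtain $|b_m|\le Ca^{-|m|^{1/\alpha}}$ for some $a>1$.

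The second stage, which I expect to be the main obstacle and which is presumably where the gap in \cite{D1} lies, is to pass from the decay of $b_m$ to that of $a_n$. Expanding $(\mathbf{1}-w)^{\mathbf{1}}=\sum_{q\in\{0,1\}^d}(-1)^{|q|}w^q$ gives $b_m=\sum_{q\le m,\,q\in\{0,1\}^d}(-1)^{|q|}a_{m-q}$, a $d$-fold difference whose inverse is the $d$-fold partial sum $a_n=\sum_{k\le n}b_k$. The key point is that $F$ vanishes on each face $w_l=1$, so the full sum over any single coordinate vanishes: $\sum_{k_l=0}^{\infty}b_{(k',k_l)}=0$ for every choice of the remaining indices $k'$ (justified by absolute convergence of $\sum_k b_k$ together with $F(\mathbf{1})=0$). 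Writing $\sum_{k_l=0}^{n_l}=\sum_{k_l=0}^{\infty}-\sum_{k_l>n_l}$ in each coordinate and expanding the product, every term containing at least one full sum drops out, leaving $a_n=(-1)^d\sum_{k>n}b_k$. Finally I would bound this tail by $\sum_{k>n}a^{-|k|^{1/\alpha}}\le a^{-|n|^{1/\alpha}/2}\sum_k a^{-|k|^{1/\alpha}/2}$, using $|k|>|n|$ on the range of summation and convergence of the last series; with $\tilde a=a^{1/2}$ this is exactly $(ii)$. The delicate items to verify carefully are the uniform-in-$r$ chain-rule estimate and the vanishing of these one-dimensional marginals.
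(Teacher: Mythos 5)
Your proposal is correct in substance, but it takes a genuinely different route from the paper's in both halves of the hard implication $(i)\Rightarrow(ii)$, so a comparison is in order. To extract the decay of the Taylor coefficients $b_m$ of $F$, the paper simply notes that $D^pF(w)=\sum_{n\geq p}\frac{n!}{(n-p)!}\,b_n w^{n-p}$ and applies the Cauchy estimate to $D^pF$ on polydiscs of radius $r<1$, obtaining $\frac{n!}{(n-p)!}|b_n|\leq CA^{|p|}p^{\alpha p}$ at once; your angular integration by parts reaches the same bound but only because $\alpha\geq 1$ lets you absorb the combinatorial factor coming from $(iw\partial_w)^p=\sum_j S(p,j)(iw)^j\partial_w^j$: the naive estimate gives $p^{(\alpha+1)p}$, and you need $S(p,j)\,j^{\alpha j}\leq \binom{p}{j}p^{\,p+(\alpha-1)j}\leq 2^{p}p^{\alpha p}$ for $j\leq p$ to recover $p^{\alpha p}$. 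This is a real subtlety you should spell out, and the paper's direct Cauchy estimate avoids it entirely. For the passage from $b$ to $a$, the paper proves a finite box identity, $\sum_{p\leq m}b_{n+\mathbf{1}+p}=\sum_{k\leq\mathbf{1}}(-1)^{|k|}a_{(\text{vertex})}$, an alternating sum over the $2^d$ vertices of a parallelepiped, and lets $m\rightarrow\infty$, using $a_n\rightarrow 0$ to kill every vertex except $a_n$ itself; you instead invert the difference operator globally, $a_n=\sum_{k\leq n}b_k$, and turn the partial sum into the tail $(-1)^d\sum_{k>n}b_k$ via the vanishing of the one-dimensional marginal sums of $\{b_k\}$. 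These are dual formulations of the same telescoping, and your tail estimate $\sum_{k>n}a^{-|k|^{1/\alpha}}\leq a^{-|n|^{1/\alpha}/2}\sum_k a^{-|k|^{1/\alpha}/2}$ is, if anything, the cleaner endgame.

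One point needs repair. Your parenthetical justification of the marginal vanishing by ``absolute convergence of $\sum_k b_k$ together with $F(\mathbf{1})=0$'' is not sufficient: vanishing at the single point $\mathbf{1}$ only yields the total sum $\sum_k b_k=0$, whereas you need $\sum_{k_l=0}^{\infty}b_{(k',k_l)}=0$ for \emph{every} fixed $k'$, i.e.\ the vanishing of (the continuous extension of) $F$ on each entire face $\{w_l=1\}$ followed by identification of the marginal sums as the Taylor coefficients of that restriction. The direct way to get this is the same hypothesis the paper leans on: the partial marginal sum $\sum_{k_l=0}^{M}b_{(k',k_l)}$ telescopes to an alternating sum of finitely many terms $a_{(k'-q',M)}$, each of which tends to $0$ as $M\rightarrow\infty$ because $a_n\rightarrow 0$. (That hypothesis is indispensable: for $d=1$ and $a_n\equiv 1$ one gets $F\equiv 1$, which satisfies $(i)$ but not $(ii)$.) With this repair your argument is complete. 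Your $(ii)\Rightarrow(i)$, via Leibniz on $(\mathbf{1}-w)^{\mathbf{1}}G$ and coordinatewise maximisation, is a harmless variant of the paper's, which instead observes that the $b_n$ inherit the decay of the $a_n$ from the finite-difference formula and estimates $\sum_{n\geq p}\frac{n!}{(n-p)!}|b_n|$ directly.
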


\begin{proof} Clearly $F\in \mathcal{O}(\mathbf{D})$.
Let $\sum_{n\in\NN^d_0} b_nw^n$ be the power series expansion of
$F$ at $0$. Then, for $n\in\NN^d_0$ we have \bea
b_n&=&\frac{D^nF(0)}{n!}=\frac{1}{n!}\sum_{\substack{k\leq n\\
k\leq
\mathbf{1}}}\binom{n}{k} (-1)^{|k|}\left(\sum_{m\geq n-k}\frac{m!}{(m-n+k)!} a_mw^{m-n+k}\right)\Bigg|_{w=0}\nonumber\\
&=& \sum_{\substack{k\leq n\\ k\leq
\mathbf{1}}}(-1)^{|k|}a_{n-k}.\label{estpb} \eea Thus, for
$n,m\in\NN^d_0$, \bea\label{sumft} \sum_{p\leq
m}b_{n+\mathbf{1}+p}=\sum_{p\leq m}\sum_{k\leq
\mathbf{1}}(-1)^{|k|}a_{n+p+\mathbf{1}-k}. \eea Firstly, assume
that $d\geq 2$. Denote by $Q_{n,m}$ the $d$-dimensional
parallelepiped $\{x\in\RR^d|\, n_l\leq x_l\leq n_l+m_l+1, \,
l=1,\ldots,d\}$. If $q\in\NN^d_0$ is such that $n+q$ is in the
interior of $Q_{n,m}$ then $a_{n+q}$ appears exactly $2^d$ times
in the sum on the right hand side of (\ref{sumft}) such that
$2^{d-1}$ times with the "$+$" sign and $2^{d-1}$ times with "$-$"
sign. If $n+q$ is on the $s$-dimensional face of $Q_{n,m}$, $1\leq
s\leq d-1$, then $a_{n+q}$ appears exactly $2^s$ times half of
which are with the "$+$" sign and the other half with the "$-$"
sign. Thus on the right hand side of (\ref{sumft}) everything
cancels except for those terms which indexes are the vertices of
$Q_{n,m}$ and they appear only once. For $k\in\NN^d_0$ with $k\leq
\mathbf{1}$ denote by $m^{(k)}$ the multi-index that satisfies
$m^{(k)}_l=0$ if $k_l=0$ and $m^{(k)}_l=m_l+1$ if $k_l=1$,
$l=1,\ldots,d$; when $k$ varies through the multi-indexes that are
$\leq \mathbf{1}$, $n+m+\mathbf{1}-m^{(k)}$ varies through the
vertices of $Q_{n,m}$. Using this notations, by the above
observations, we have \bea\label{ieqfc} \sum_{p\leq
m}b_{n+\mathbf{1}+p}=\sum_{k\leq
\mathbf{1}}(-1)^{|k|}a_{n+m+\mathbf{1}-m^{(k)}},\,\, \forall
n,m\in\NN^d_0. \eea
Clearly, for $d=1$ (\ref{sumft}) and (\ref{ieqfc}) are equal.\\
\indent Assume that $(i)$ holds. Since $D^pF(w)=\sum_{n\geq p}
\left(n!/(n-p)!\right)b_nw^{n-p}$, the hypothesis in $(i)$ and the
Cauchy formula yield $\left(n!/(n-p)!\right)|b_n|\leq
CA^{|p|}p^{\alpha p}$, for all $n,p\in\mathbb{N}^d_0$, $n\geq p$.
As $n!/(n-p)!\geq e^{-|p|}n^p$, for $n\geq p$, we have
\begin{equation}\label{D 3.6}
|b_n|\leq C\prod_{j=1}^d\inf_{p_j\leq n_j} \frac{(e
A)^{p_j}p_j^{\alpha p_j}}{n_j^{p_j}},\,\,n\in\mathbb{N}_0^d.
\end{equation}
Of course we can assume $A\geq 1$. Then, if $p_j\geq n_j$,
$$(e A)^{p_j}p_j^{\alpha p_j}/n_j^{p_j}\geq(e A)^{n_j}n_j^{\alpha n_j}/n_j^{n_j},$$
\noindent and so the infimum in (\ref{D 3.6}) can be taken varying
on $p_j\geq 0$, $j=1,\ldots,d$. Thus, \cite[(2) and (3), p.
169-170]{GS1} imply, with suitable $c'>0$ and $a'>1$,
$$|b_n|\leq C\prod_{j=1}^d\inf_{p_j\geq 0}\frac{(e A)^{p_j}p_j^{\alpha
p_j}}{n_j^{p_j}}\leq
c'a'^{-|n|^{1/\alpha}},\,\,n\in\mathbb{N}_0^d.$$ Observe that for
$p,n\in\NN^d_0$ with $p\geq n$, we have $|p|^{1/\alpha}\geq
(|p-n|^{1/\alpha}+|n|^{1/\alpha})/2$. Thus, if we put
$a=\sqrt{a'}>1$ we have $a'^{-|p|^{1/\alpha}}\leq
a^{-|p-n|^{1/\alpha}}a^{-|n|^{1/\alpha}}$ for all $p\geq n$. The
above estimate for $|b_n|$ together with (\ref{ieqfc}) implies
that for all $n,m\in\NN^d_0$ \beas
|a_n|&\leq& \sum_{p\leq m}|b_{n+\mathbf{1}+p}|+\sum_{\substack{k\leq \mathbf{1}\\k\neq \mathbf{1}}}|a_{n+m+\mathbf{1}-m^{(k)}}|\leq c'a^{-|n|^{1/\alpha}}\sum_{p\in\NN^d_0} a^{-|p|^{1/\alpha}}+\sum_{\substack{k\leq \mathbf{1}\\k\neq \mathbf{1}}}|a_{n+m+\mathbf{1}-m^{(k)}}|\\
&=& ca^{-|n|^{1/\alpha}}+\sum_{\substack{k\leq \mathbf{1}\\k\neq
\mathbf{1}}}|a_{n+m+\mathbf{1}-m^{(k)}}|. \eeas
The last sum has exactly $2^d-1$ terms and since $k\neq \mathbf{1}$, $|n+m+\mathbf{1}-m^{(k)}|\geq |n|+\min\{m_l|\, l=1,\ldots ,d\}$. Let $n\in\NN^d_0$ be arbitrary but fixed. Since the above estimate for $|a_n|$ holds for arbitrary $m\in\NN^d_0$ and since $a_n\rightarrow 0$ as $|n|\rightarrow\infty$ (by hypothesis), this implies $|a_n|\leq ca^{-|n|^{1/\alpha}}$.\\
\indent Assume now that $(ii)$ holds. Then (\ref{estpb}) implies
the existence of $a>1$ and $c>0$ such that $|b_n|\leq c
a^{-|n|^{1/\alpha}}$, $\forall n\in\NN^d_0$. Observe that
$n_1^{1/\alpha}+\ldots+n_d^{1/\alpha}\leq d|n|^{1/\alpha}$. Hence,
by putting $a'=a^{1/d}$, we have $a^{-|n|^{1/\alpha}}\leq
\prod_{j=1}^d a'^{-n_j^{1/\alpha}}$. Now, for $p\in\NN^d_0$ and
$w\in\mathbf{D}$ we obtain
$$|D^pF(w)|\leq \sum_{n\geq p}\frac{n!}{(n-p)!}|b_n|\leq c\sum_{n\in\mathbb{N}^d_0}\prod_{j=1}^d n_j^{p_j}a'^{-n_j^{1/\alpha}}.$$
Since $\rho(x)=x^pu^{-x^{1/\alpha}}$, $x\geq0$ ($u>1$,
$p\in\NN_0$) attains its maximum  at $x=(\alpha p/\ln
u)^{\alpha}$, we proved (\ref{D 3.5}). \qed
\end{proof}

We will prove in Proposition \ref{D Cor3.5} that for $f\in
G_\alpha^\alpha(\mathbb{R}^d_+)$, the analytic function
$\mathcal{F}_\mathbf{D}(f)$ satisfies part $(i)$ of the previous
proposition. In order to prove this we need the next result; its
proof is analogous to the proof of \cite[Theorem 3.3]{D1} for the
one dimensional case and we omit it.

\begin{prop}\label{D Th.3.3}
Let $f\in G_\alpha(\mathbb{R}^d_+)$, $\alpha\geq 1$. Then there
exist constants $C, A>0$ such that
$$|D^p\mathcal{F}_{\mathbf{D}}(f)(w)|\leq CA^{|p|}p^{\alpha p},\;p\in\mathbb{N}_0^d,\;w\in\mathbf{D},\;\mathrm{Re}\, w_l\leq
0,\,\, l=1,...,d.$$
\end{prop}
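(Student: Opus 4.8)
The plan is to represent $\mathcal{F}_{\mathbf{D}}(f)$, for $f\in G_\alpha(\RR^d_+)\subseteq\SSS(\RR^d_+)$, as the absolutely convergent integral
\[
\mathcal{F}_{\mathbf{D}}(f)(w)=\int_{\RR^d_+}f(t)\prod_{l=1}^d e^{-\phi(w_l)t_l}\,dt,\qquad \phi(\zeta)=\frac{1}{1-\zeta}-\frac12,
\]
and to differentiate under the integral sign, which is legitimate on the region $\mathrm{Re}\,w_l\le0$ by the decay of $f$ together with the uniform domination established below. Since the integrand is a product over $l$, the operator $D^p=\prod_l\partial_{w_l}^{p_l}$ acts coordinatewise, so the core computation is one-dimensional: with $h(w)=-\phi(w)t=(\tfrac12-\tfrac{1}{1-w})t$ one has $h^{(j)}(w)=-t\,j!\,(1-w)^{-(j+1)}$ for $j\ge1$, and Fa\`a di Bruno's formula gives
\[
\partial_w^{p_l}e^{-\phi(w)t}=e^{-\phi(w)t}\sum_{r=1}^{p_l}(-1)^{r}\,S(p_l,r)\,\frac{t^{r}}{(1-w)^{p_l+r}},\qquad S(p_l,r)=\sum_{|\pi|=r}\prod_{B\in\pi}|B|!,
\]
the inner sum running over partitions $\pi$ of $\{1,\dots,p_l\}$ into $r$ blocks. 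Two geometric facts make $\mathrm{Re}\,w_l\le0$ the right region: it forces $|1-w_l|\ge1$, so $|1-w_l|^{-(p_l+r)}\le1$; and for every $w_l\in\mathbf{D}$ one has $\mathrm{Re}\,\phi(w_l)>0$, so $|e^{-\phi(w_l)t_l}|\le1$ for $t_l\ge0$.

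Second, I would bound the $t$-integrals by the $G_\alpha$-seminorm of $f$ with $p=0$. For a multi-index $r$ with $1\le r\le p$, the Cauchy--Schwarz inequality against the weight $\prod_l(1+t_l)^{-1}$ (whose square is integrable on $\RR^d_+$ with integral $1$) gives
\[
\int_{\RR^d_+}|f(t)|\,t^{r}\,dt\le\Big(\int_{\RR^d_+}|f(t)|^2\,t^{2r}\prod_{l}(1+t_l)^2\,dt\Big)^{1/2},
\]
and expanding $\prod_l(1+t_l)^2$ and using $t^{2r+1}\le t^{2r}+t^{2r+2}$ reduces the right-hand side to a finite combination of norms $\|t^{k/2}f\|_2$ with $k$ ranging between $2r$ and $2r+2\cdot\mathbf{1}$. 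The defining estimate of $G_{\alpha,A}(\RR^d_+)$ at $p=0$, namely $\|t^{k/2}f\|_2\le C_0A^{|k|}k^{(\alpha/2)k}$, then yields constants $C_f,D>0$ independent of $r$ with $\int_{\RR^d_+}|f(t)|\,t^{r}\,dt\le C_f\,D^{|r|}(r!)^\alpha$, after absorbing the factors $2^{\alpha r_l}$ and passing between $r^{\alpha r}$ and $(r!)^\alpha$ by Stirling.

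Finally I would assemble the estimate. Combining the two steps and using $|1-w_l|\ge1$ and $|e^{-\phi(w_l)t_l}|\le1$,
\[
|D^p\mathcal{F}_{\mathbf{D}}(f)(w)|\le C_f\sum_{1\le r\le p}\Big(\prod_{l=1}^d S(p_l,r_l)\Big)D^{|r|}(r!)^\alpha=C_f\prod_{l=1}^d\Big(\sum_{r_l=1}^{p_l}S(p_l,r_l)\,D^{r_l}(r_l!)^\alpha\Big),
\]
the interchange of sum and product being valid because the summand factorizes over coordinates. The decisive input is the closed form $S(p_l,r)\,r!=p_l!\binom{p_l-1}{r-1}$ (an ordered partition into $r$ ordered blocks is a permutation of $\{1,\dots,p_l\}$ together with a composition of $p_l$ into $r$ parts); it lets me rewrite the $l$-th factor as $p_l!\sum_{r}\binom{p_l-1}{r-1}D^{r}(r!)^{\alpha-1}$, and since $\alpha\ge1$ gives $(r!)^{\alpha-1}\le(p_l!)^{\alpha-1}$, this is at most $(p_l!)^\alpha D(1+D)^{p_l-1}$. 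Multiplying over $l$ and using $(p!)^\alpha\le p^{\alpha p}$ produces the claimed bound $|D^p\mathcal{F}_{\mathbf{D}}(f)(w)|\le CA^{|p|}p^{\alpha p}$ with $A=1+D$. The main obstacle is precisely this combinatorial bookkeeping: a crude bound such as $S(p_l,r)\le p_l!\cdot(\text{Bell number})$ overshoots $p^{\alpha p}$, and it is the exact identity for $S(p_l,r)$ that collapses the block-factorials against the $r!$ coming from the integral estimate and leaves a controllable binomial sum.
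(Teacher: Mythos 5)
Your proof is correct. The paper itself omits the proof of this proposition (it only refers to Duran's one-dimensional argument in \cite[Theorem 3.3]{D1}), and your argument is precisely the expected one carried out in $d$ dimensions: differentiate the kernel $\prod_l e^{-\frac12\frac{1+w_l}{1-w_l}t_l}$ under the integral, note that your Fa\`a di Bruno coefficients $S(p_l,r)$ are the Lah numbers so that the identity $S(p_l,r)\,r!=p_l!\binom{p_l-1}{r-1}$ is exactly what collapses $\sum_r S(p_l,r)D^r(r_l!)^{\alpha}$ to $(p_l!)^{\alpha}D(1+D)^{p_l-1}$ for $\alpha\ge 1$, use $|1-w_l|\ge 1$ on $\mathrm{Re}\,w_l\le 0$ together with $\mathrm{Re}\bigl(\tfrac12\tfrac{1+w_l}{1-w_l}\bigr)>0$ on $\mathbf{D}$, and control the moments $\int_{\RR^d_+}|f(t)|t^r\,dt$ by the $p=0$ seminorms of $G_{\alpha,A}(\RR^d_+)$ via Cauchy--Schwarz.
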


\begin{prop}\label{D Cor3.5}
Let $f\in G^{\alpha}_{\alpha}(\mathbb{R}^d_+)$, $\alpha\geq 1$.
Then there exist constants $C, A>0$ such that
\begin{equation}\label{D Cor3.5 (1)}
|D^p\mathcal{F}_{\mathbf{D}}(f)(w)|\leq CA^{|p|}p^{\alpha p},\;
p\in\mathbb{N}_0^d,\; w\in\mathbf{D}
\end{equation}
and $\lim_{w\rightarrow
\mathbf{1}}\mathcal{F}_{\mathbf{D}}(f)(w)=0$.
\end{prop}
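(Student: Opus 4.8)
The plan is to bootstrap from Proposition \ref{D Th.3.3}, which already gives the bound \eqref{D Cor3.5 (1)} on the sub-polydisc where $\mathrm{Re}\,w_l\le 0$ for every $l$, and to reach the remaining $2^d-1$ corners of $\mathbf{D}$ by reflecting the "bad" coordinates with the partial Hankel--Clifford transforms $\mathcal{H}^{(\Lambda')}_0$. For each $\Lambda'\subseteq\{1,\dots,d\}$ set $R_{\Lambda'}=\{w\in\mathbf{D}:\mathrm{Re}\,w_l\ge 0\ (l\in\Lambda'),\ \mathrm{Re}\,w_l\le 0\ (l\notin\Lambda')\}$; these $2^d$ regions cover $\mathbf{D}$. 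Fix $\Lambda'$ and put $g=\mathcal{H}^{(\Lambda')}_0 f$. By Corollary \ref{lllkk}, $g$ again lies in $G^{\alpha}_{\alpha}(\RR^d_+)\subseteq G_{\alpha}(\RR^d_+)$, so Proposition \ref{D Th.3.3} is available for $g$.

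The next step is a functional relation between $\mathcal{F}_{\mathbf{D}}(f)$ and $\mathcal{F}_{\mathbf{D}}(g)$. Writing $a_n=\int_{\RR^d_+}f\mathcal{L}_n$ and using $\mathcal{H}^{(\Lambda')}_0\mathcal{L}_n=(-1)^{\sum_{l\in\Lambda'}n_l}\mathcal{L}_n$, the Laguerre coefficients of $g$ are $(-1)^{\sum_{l\in\Lambda'}n_l}a_n$. Let $\psi$ be the map negating the coordinates indexed by $\Lambda'$. Substituting these coefficients into the series representation of Proposition \ref{Theorem C} for both $f$ and $g$ yields
\[\mathcal{F}_{\mathbf{D}}(f)(\psi(w))=\Big(\prod_{l\in\Lambda'}\tfrac{1+w_l}{1-w_l}\Big)\mathcal{F}_{\mathbf{D}}(g)(w).\]
Replacing $w$ by $\psi(w^*)$ for $w^*\in R_{\Lambda'}$ (so that $\psi(w^*)$ has all real parts $\le 0$, where Proposition \ref{D Th.3.3} applies to $g$) turns this into $\mathcal{F}_{\mathbf{D}}(f)(w^*)=h(w^*)\,\mathcal{F}_{\mathbf{D}}(g)(\psi(w^*))$ with $h(w^*)=\prod_{l\in\Lambda'}\frac{1-w^*_l}{1+w^*_l}$.

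It then remains to differentiate this product. On $\{\mathrm{Re}\,w^*_l\ge 0\}$ one has $|1+w^*_l|\ge 1$, so each single-variable factor of $h$ and all its derivatives are bounded, giving $|D^q h(w^*)|\le 2^d q!$. Since $\psi(w^*)$ has all coordinates with nonpositive real part, Proposition \ref{D Th.3.3} bounds $|D^{p-q}(\mathcal{F}_{\mathbf{D}}(g)\circ\psi)(w^*)|\le CA^{|p-q|}(p-q)^{\alpha(p-q)}$ (we may take $A\ge 1$). Applying the Leibniz rule and the elementary inequalities stated at the start of Section 3, together with $q!\le q^{\alpha q}$ for $\alpha\ge 1$, $q^{\alpha q}(p-q)^{\alpha(p-q)}\le p^{\alpha p}$, and $\sum_{q\le p}\binom{p}{q}=2^{|p|}$, gives \eqref{D Cor3.5 (1)} on $R_{\Lambda'}$ with constants depending only on $\Lambda'$; taking the maximum over the finitely many $\Lambda'$ produces the estimate on all of $\mathbf{D}$. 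The one genuinely delicate point is this reflection trick: it is precisely the $G^{\alpha}$-regularity of $f$ (the superscript) that keeps $g=\mathcal{H}^{(\Lambda')}_0 f$ inside $G_\alpha$ — by Theorem \ref{Theorem A} the transform of a mere $G_\alpha$ function would only land in $G^\alpha$ — so that Proposition \ref{D Th.3.3} survives the reflection. Everything else is a routine Leibniz estimate.

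For the boundary limit I use that $f\in\SSS(\RR^d_+)$ forces $\{a_n\}\in\sss$ through the isomorphism $\SSS(\RR^d_+)\cong\sss$, whence $\sum_n|a_n|<\infty$ and $S(w)=\sum_n a_n w^n$ is continuous, hence bounded, on the closed polydisc $\overline{\mathbf{D}}$. Since $\mathcal{F}_{\mathbf{D}}(f)(w)=\prod_{j=1}^d(1-w_j)\,S(w)$ by Proposition \ref{Theorem C}, and the prefactor tends to $0$ as $w\to\mathbf{1}$ while $S$ stays bounded, we conclude $\lim_{w\to\mathbf{1}}\mathcal{F}_{\mathbf{D}}(f)(w)=0$.
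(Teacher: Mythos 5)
Your proposal is correct and follows essentially the same route as the paper: the boundary limit via the series representation of Proposition \ref{Theorem C} with $\{a_n\}\in\sss$, and the main bound by covering $\mathbf{D}$ with the regions $R_{\Lambda'}$, reflecting via $\mathcal{H}^{(\Lambda')}_0$ and Corollary \ref{lllkk} to land in the set where Proposition \ref{D Th.3.3} applies, and then a Leibniz estimate using $|1+\zeta|\geq 1$ on $\mathrm{Re}\,\zeta\geq 0$. The functional relation you derive is exactly the paper's identity (\ref{est11kk}), so no further comment is needed.
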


\begin{proof}
As $f\in \SSS(\mathbb{R}^d_+)$, Proposition \ref{Theorem C}
implies
that $\lim_{w\rightarrow \mathbf{1}}\mathcal{F}_{\mathbf{D}}(f)(w)=0$.\\
\indent We introduce some notation to make the simpler simpler.
Let $\Lambda'=\{\lambda'_1,\ldots,\lambda'_{d'}\}\subseteq
\{1,\ldots,d\}$ and $\Lambda''=\{\lambda''_1,
\ldots,\lambda''_{d''}\}=\{1,\ldots,d\}\backslash\Lambda'$. For
$\zeta\in\CC^d$ (or in $\RR^d_+$, or in $\NN^d_0$) we denote
$\zeta_{\Lambda'}=(\zeta_{\lambda'_1},\ldots,
\zeta_{\lambda'_{d'}})$,
$\zeta_{\Lambda''}=(\zeta_{\lambda''_1},\ldots,\zeta_{\lambda''_{d''}})$
and by abusing the notation we write
$\zeta=(\zeta_{\Lambda'},\zeta_{\Lambda''})$. Let
$\tilde{\Lambda'}$ be the biholomorphic mapping from $\CC^d$ onto
itself defined by $\tilde{\Lambda'}w=\zeta$ where
$\zeta_{\lambda'_l}=-w_{\lambda'_l}$, $l=1,\ldots,d'$ and
$\zeta_{\lambda''_s}=w_{\lambda''_s}$, $s=1,\ldots,d''$. Also,
denote \beas \mathbf{D}_{(\Lambda')}=\{\zeta\in\mathbf{D}|\,
\mathrm{Re}\, \zeta_{\lambda'_l}\geq 0,\, l=1,\ldots,d',\,\,
\mbox{and}\,\, \mathrm{Re}\, \zeta_{\lambda''_s}\leq 0,\,
s=1,\ldots,d''\} \eeas (note that $\mathbf{D}_{(\emptyset)}$
consists of all $w\in\mathbf{D}$ such that the coordinates of $w$
have non-positive real
parts).\\
\indent For $f\in\SSS(\RR^d_+)$ let $a_n=\langle
f,\mathcal{L}_n\rangle$, $n\in\NN^d_0$. Then, Proposition
\ref{Theorem C} implies
$\mathcal{F}_{\mathbf{D}}f(w)=(\mathbf{1}-w)^{\mathbf{1}}\sum_{n\in\NN^d_0}
a_n w^n$, $w\in\mathbf{D}$. As $\langle
\mathcal{H}^{(\Lambda')}_0f,\mathcal{L}_n\rangle=
(-1)^{n_{\lambda'_1}+\ldots+ n_{\lambda'_{d'}}}a_n$ and \beas
\mathcal{F}_{\mathbf{D}}(\mathcal{H}^{(\Lambda')}_0f)(w)=(\mathbf{1}-w)^{\mathbf{1}}\sum_{n\in\NN^d_0}
(-1)^{n_{\lambda'_1}+ \ldots+ n_{\lambda'_{d'}}}a_n w^n,\,\,\,
w\in\mathbf{D}. \eeas Hence, \beas
\mathcal{F}_{\mathbf{D}}(\mathcal{H}^{(\Lambda')}_0f)(\tilde{\Lambda'}w)=\left(\prod_{l=1}^{d'}
\frac{1+w_{\lambda'_l}}{1-w_{\lambda'_l}}\right)\cdot(\mathbf{1}-w)^{\mathbf{1}}\sum_{n\in\NN^d_0}a_nw_n,\,\,\,
w\in\mathbf{D}. \eeas Thus \bea\label{est11kk}
\mathcal{F}_{\mathbf{D}}f(w)=\left(\prod_{l=1}^{d'}\frac{1-w_{\lambda'_l}}{1+w_{\lambda'_l}}\right)\mathcal{F}_{\mathbf{D}}
(\mathcal{H}^{(\Lambda')}_0f)(\tilde{\Lambda'}w),\,\,
w\in\mathbf{D},\, f\in\SSS(\RR^d_+). \eea Let $f\in
G^{\alpha}_{\alpha}(\RR^d_+)$. Since
$\mathcal{H}^{(\Lambda')}_0f\in G^{\alpha}_{\alpha}(\RR^d_+)$ (cf.
Corollary \ref{lllkk}), Proposition \ref{D Th.3.3} implies the
existence of $A,C>0$ such that \bea\label{est33kk}
\left|D^n\mathcal{F}_{\mathbf{D}}(\mathcal{H}^{(\Lambda')}_0f)(w)\right|\leq
CA^{|n|}n^{\alpha n},\,\, \forall n\in\NN^d_0,\, \forall w\in
\mathbf{D}_{(\emptyset)},\, \forall
\Lambda'\subseteq\{1,\ldots,d\}. \eea Observe that for
$w\in\mathbf{D}_{(\emptyset)}$, (\ref{D Cor3.5 (1)}) holds by
Proposition \ref{D Th.3.3}. To prove (\ref{D Cor3.5 (1)}) for
$w\in\mathbf{D}_{(\Lambda')}$ when $\emptyset\neq
\Lambda'\subseteq\{1,\ldots,d\}$, we need an estimate for the
derivatives of the function $\zeta\mapsto (1-\zeta)/(1+\zeta)$,
$\{\zeta\in\CC|\, |\zeta|<1\}\rightarrow \CC$ when $\mathrm{Re}\,
\zeta\geq 0$. Since $(1-\zeta)/(1+\zeta)=2/(1+\zeta)-1$ and
$|1+\zeta|\geq 1$ when $\mathrm{Re}\, \zeta\geq 0$, for $j\in\NN$
we have \bea\label{derestsup}
\left|\frac{d^j}{d\zeta^j}\left(\frac{1-\zeta}{1+\zeta}\right)\right|=\frac{2j!}{|1+\zeta|^{j+1}}\leq
2j!,\,\,\, \mbox{when}\,\,\, |\zeta|<1\,\, \mbox{and}\,\,
\mathrm{Re}\,\zeta\geq 0. \eea Clearly, (\ref{derestsup}) also
holds for $j=0$. Now, observe that
$\tilde{\Lambda'}(\mathbf{D}_{(\Lambda')})=\mathbf{D}_{(\emptyset)}$.
Hence, for $w\in\mathbf{D}_{(\Lambda')}$, (\ref{est11kk}),
(\ref{est33kk}) and (\ref{derestsup}) imply \beas
\left|D^n\mathcal{F}_{\mathbf{D}}f(w)\right|&\leq&
\sum_{m_{\Lambda'}\leq n_{\Lambda'}}
\binom{n_{\Lambda'}}{m_{\Lambda'}}
2^{d'}m_{\Lambda'}!\left|D^{n_{\Lambda'}-m_{\Lambda'}}_{w_{\Lambda'}}D^{n_{\Lambda''}}_{w_{\Lambda''}}
\mathcal{F}_{\mathbf{D}}(\mathcal{H}^{(\Lambda')}_0f)(\tilde{\Lambda'}w)\right|\\
&\leq& C_1\sum_{m_{\Lambda'}\leq n_{\Lambda'}}
\binom{n_{\Lambda'}}{m_{\Lambda'}} m_{\Lambda'}^{\alpha
m_{\Lambda'}}A^{|n|-|m_{\Lambda'}|}
(n_{\Lambda'}-m_{\Lambda'})^{\alpha (n_{\Lambda'}-m_{\Lambda'})}
n_{\Lambda''}^{\alpha n_{\Lambda''}}\\
&\leq& C_1(2A)^{|n|}n^{\alpha n}, \eeas which completes the proof.
\qed
\end{proof}

Now, Proposition \ref{D Lemma3.1}, Proposition \ref{D Cor3.5},
Proposition \ref{Theorem C} and Proposition \ref{D Lemma3.2} give
the main result of this section:

\begin{thm}\label{D Th3.6}(\cite[Theorem 3.6]{D1}, for d=1)
Let $\alpha\geq 1$. For $f\in L^2(\mathbb{R}^d_+)$ let
$a_n=\int_{\mathbb{R}^d_+}f(t)\mathcal{L}_{n}(t)dt$,
$n\in\mathbb{N}^d_0$. The following conditions are equivalent:
\begin{enumerate}
\item[$(i)$] There exist $c>0$ and $a>1$ such that $|a_n|\leq ca^{-|n|^{1/\alpha}}$ for $n\in\mathbb{N}^d_0$.
\item[$(ii)$] $f\in G_\alpha^\alpha(\mathbb{R}^d_+)$.
\item[$(iii)$] There exist $C, A>0$ such that
\beas |D^p\mathcal{F}_{\mathbf{D}}(f)(w)|\leq CA^{|p|}p^{\alpha
p}\quad\mbox{for}\;p\in\mathbb{N}^d_0\quad\mbox{and}\;\;
w\in\mathbf{D} \eeas and $\lim_{w\rightarrow
\mathbf{1}}\mathcal{F}_{\mathbf{D}}(f)(w)=0$.
\end{enumerate}
Conversely, given a sequence $\{a_n\}_{n\in\mathbb{N}_0^d}$
satisfying condition $(i)$ or given $F\in \mathcal{O}(\mathbf{D})$
of the form $F(w)=(\mathbf{1}-w)^{\mathbf{1}}\sum_n a_n w^n$ with
$a_n\rightarrow 0$ as $|n|\rightarrow \infty$ which satisfies
(\ref{D 3.5}), there exists $f\in G_\alpha^\alpha(\mathbb{R}^d_+)$
such that $a_n=\int_{\mathbb{R}^d_+}f(t)\mathcal{L}_n(t)dt$ and
$\mathcal{F}_{\mathbf{D}}(f)(w)=F(w)$ for $w\in\mathbf{D}$.
\end{thm}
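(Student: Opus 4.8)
The plan is to establish the cycle $(i)\Rightarrow(ii)\Rightarrow(iii)\Rightarrow(i)$ by feeding the three preparatory propositions into one another, and then to obtain the concluding converse assertion as a short corollary of the same ingredients. Throughout I regard $f\in L^2(\RR^d_+)$ as an element of $(\SSS(\RR^d_+))'$ via the continuous dense embedding $\SSS(\RR^d_+)\hookrightarrow L^2(\RR^d_+)$, so that $a_n=\int_{\RR^d_+}f(t)\mathcal{L}_n(t)\,dt=\langle f,\mathcal{L}_n\rangle$ and $\mathcal{F}_{\mathbf{D}}(f)$ is the analytic function supplied by Lemma \ref{analyticitt}.

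First I would note that $(i)\Rightarrow(ii)$ is precisely Proposition \ref{D Lemma3.1} and that $(ii)\Rightarrow(iii)$ is precisely Proposition \ref{D Cor3.5}. For $(iii)\Rightarrow(i)$ the key observation is that $\mathcal{F}_{\mathbf{D}}(f)$ is exactly the function $F$ featured in Proposition \ref{D Lemma3.2}: by Proposition \ref{Theorem C} one has $\mathcal{F}_{\mathbf{D}}(f)(w)=(\mathbf{1}-w)^{\mathbf{1}}\sum_{n}a_nw^n$, and since $f\in L^2(\RR^d_+)$ while $\{\mathcal{L}_n\}_{n}$ is an orthonormal basis, $\{a_n\}\in\ell^2$, whence $a_n\to 0$ as $|n|\to\infty$. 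The derivative estimate assumed in $(iii)$ is then literally hypothesis $(i)$ of Proposition \ref{D Lemma3.2}, so the equivalent condition $(ii)$ of that proposition yields $|a_n|\leq ca^{-|n|^{1/\alpha}}$, which is condition $(i)$ of the theorem. The boundary condition $\lim_{w\to\mathbf{1}}\mathcal{F}_{\mathbf{D}}(f)(w)=0$ is not needed in this implication; it is recorded in $(iii)$ only to make the characterisation self-contained and is automatically produced in $(ii)\Rightarrow(iii)$ via Proposition \ref{D Cor3.5}.

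For the converse I would first collapse the two given data into one. If $F\in\mathcal{O}(\mathbf{D})$ of the stated form with $a_n\to 0$ satisfies \eqref{D 3.5}, then the implication $(i)\Rightarrow(ii)$ of Proposition \ref{D Lemma3.2} shows that $\{a_n\}$ satisfies condition $(i)$ of the theorem; hence in either case I am handed a sequence with $|a_n|\leq ca^{-|n|^{1/\alpha}}$. Such a sequence lies in $\sss^\alpha\subseteq\sss$, so the topological isomorphism $\iota$ of $\SSS(\RR^d_+)$ onto $\sss$ allows me to set $f:=\sum_{n}a_n\mathcal{L}_n$, the series converging in $\SSS(\RR^d_+)\hookrightarrow L^2(\RR^d_+)$. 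Orthonormality of the $\mathcal{L}_n$ in $L^2(\RR^d_+)$ gives $\int_{\RR^d_+}f(t)\mathcal{L}_n(t)\,dt=a_n$; Proposition \ref{D Lemma3.1} upgrades $f$ to an element of $G^\alpha_\alpha(\RR^d_+)$; and Proposition \ref{Theorem C} gives $\mathcal{F}_{\mathbf{D}}(f)(w)=(\mathbf{1}-w)^{\mathbf{1}}\sum_{n}a_nw^n=F(w)$, which finishes the construction.

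Since every genuine estimate has already been carried out in Propositions \ref{D Lemma3.1}, \ref{D Cor3.5} and \ref{D Lemma3.2}, this final theorem is essentially an assembly, and the only points that require care are the two compatibility checks that splice the pieces together. The hard part, such as it is, will be verifying that membership $f\in L^2(\RR^d_+)$ forces $a_n\to 0$ (so that Proposition \ref{D Lemma3.2} is legitimately applicable in $(iii)\Rightarrow(i)$), and that in the converse the constructed series $\sum_n a_n\mathcal{L}_n$ must first be recognised as an element of $\SSS(\RR^d_+)$ before Proposition \ref{D Lemma3.1} can be invoked to place it in $G^\alpha_\alpha(\RR^d_+)$.
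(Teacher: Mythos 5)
Your proof is correct and follows essentially the same route as the paper, which itself derives the theorem in one line as an assembly of Propositions \ref{D Lemma3.1}, \ref{D Cor3.5}, \ref{Theorem C} and \ref{D Lemma3.2}. The two compatibility checks you single out (that $f\in L^2(\RR^d_+)$ forces $a_n\to 0$ via orthonormality of $\{\mathcal{L}_n\}_n$, so Proposition \ref{D Lemma3.2} applies, and that the series $\sum_n a_n\mathcal{L}_n$ first lands in $\SSS(\RR^d_+)$ before Proposition \ref{D Lemma3.1} is invoked) are exactly the points left implicit in the paper, and you resolve them correctly.
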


\section{Topological properties of $G^{\alpha}_{\alpha}(\RR^d_+)$, $\alpha\geq 1$. The Kernel theorems}

As we shell see, we gain deep insights into the topological
structure of $G^{\alpha}_{\alpha}(\RR^d_+)$, $\alpha\geq 1$ by
Theorem \ref{D Th3.6}. Let $\iota:
G^{\alpha}_{\alpha}(\RR^d_+)\rightarrow \sss^{\alpha}$,
$\iota(f)=\{\langle f,\mathcal{L}_n\rangle\}_{n\in\NN^d_0}$.
Theorem \ref{D Th3.6} proves that $\iota$ is a well defined
bijection.

\begin{thm}\label{isoss}
Let $\alpha\geq 1$. The mapping $\iota:
G^{\alpha}_{\alpha}(\RR^d_+)\rightarrow \sss^{\alpha}$,
$\iota(f)=\{\langle f,\mathcal{L}_n\rangle\}_{n\in\NN^d_0}$, is a
topological isomorphism between $G^{\alpha}_{\alpha}(\RR^d_+)$ and
$\sss^{\alpha}$. In particular, $G^{\alpha}_{\alpha}(\RR^d_+)$ is
a $(DFN)$-space and $(G^{\alpha}_{\alpha}(\RR^d_+))'$ is
an $(FN)$-space.\\
\indent For each $f\in G^{\alpha}_{\alpha}(\RR^d_+)$,
$\sum_{n\in\NN^d_0}\langle f,\mathcal{L}_n\rangle \mathcal{L}_n$
is summable to $f$ in $G^{\alpha}_{\alpha}(\RR^d_+)$.
\end{thm}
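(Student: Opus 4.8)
The plan is to exploit the fact, guaranteed by Theorem \ref{D Th3.6}, that $\iota$ is already a well-defined linear bijection; it then remains only to upgrade this to a bicontinuous map, and the cleanest route is the closed graph theorem of De Wilde applied in both directions. First I would record the closedness of the graph $\Gamma=\{(f,\iota f):f\in G^{\alpha}_{\alpha}(\RR^d_+)\}$ in $G^{\alpha}_{\alpha}(\RR^d_+)\times\sss^{\alpha}$. For each fixed $n\in\NN^d_0$ the functional $f\mapsto\langle f,\mathcal{L}_n\rangle$ is continuous on $G^{\alpha}_{\alpha}(\RR^d_+)$, because this space is continuously injected into $\SSS(\RR^d_+)$ and $g\mapsto\int_{\RR^d_+}g\mathcal{L}_n$ is continuous on $\SSS(\RR^d_+)$; likewise the coordinate evaluation $\{a_m\}_m\mapsto a_n$ is continuous on each Banach space $\sss^{\alpha,a}$ (since $|a_n|\le a^{-|n|^{1/\alpha}}\|\{a_m\}_m\|_{\sss^{\alpha,a}}$) and hence on the inductive limit $\sss^{\alpha}$. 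Thus $(f,\{a_m\}_m)\mapsto\langle f,\mathcal{L}_n\rangle-a_n$ is continuous, and $\Gamma$, being the intersection over $n$ of the zero sets of these functionals, is closed; since the graph of $\iota^{-1}$ is merely the image of $\Gamma$ under the coordinate flip, it too is closed.

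Next I would assemble the structural ingredients needed to feed De Wilde's theorem. By Proposition \ref{lkrrr}, $\sss^{\alpha}$ is a $(DFN)$-space, in particular a $(DFS)$-space, so it is simultaneously ultrabornological and webbed. On the other hand $G^{\alpha}_{\alpha}(\RR^d_+)$ is, by construction, a countable inductive limit (take $A=k$, $k\in\NN$) of the Fr\'echet spaces $G^{\alpha,A}_{\alpha,A}(\RR^d_+)$; since Fr\'echet spaces are both ultrabornological and webbed and both properties are stable under countable inductive limits, $G^{\alpha}_{\alpha}(\RR^d_+)$ is ultrabornological and webbed as well. Applying De Wilde's closed graph theorem to $\iota\colon G^{\alpha}_{\alpha}(\RR^d_+)\to\sss^{\alpha}$ (ultrabornological domain, webbed codomain) yields continuity of $\iota$, and applying it to $\iota^{-1}\colon\sss^{\alpha}\to G^{\alpha}_{\alpha}(\RR^d_+)$ (again ultrabornological domain, webbed codomain) yields continuity of $\iota^{-1}$. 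Hence $\iota$ is a topological isomorphism, and transporting the structure of $\sss^{\alpha}$ across it shows that $G^{\alpha}_{\alpha}(\RR^d_+)$ is a $(DFN)$-space with $(FN)$-space strong dual.

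Finally, the summability statement I would deduce from the isomorphism together with an elementary computation in $\sss^{\alpha}$. Fixing $f$ and writing $a_n=\langle f,\mathcal{L}_n\rangle$, condition $(i)$ of Theorem \ref{D Th3.6} gives $|a_n|\le Ca^{-|n|^{1/\alpha}}$ for some $a>1$, so for any $1<b<a$ and with $e_n$ the $n$-th unit sequence we have the absolutely convergent estimate $\sum_{n}\|a_ne_n\|_{\sss^{\alpha,b}}=\sum_n|a_n|b^{|n|^{1/\alpha}}\le C\sum_n(b/a)^{|n|^{1/\alpha}}<\infty$. Hence $\{a_n\}_n=\sum_n a_ne_n$ is summable in the Banach space $\sss^{\alpha,b}$ and a fortiori in $\sss^{\alpha}$. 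Since $\iota^{-1}(e_n)=\mathcal{L}_n$ and $\iota^{-1}$ is continuous and linear, it preserves summability, so $\sum_n\langle f,\mathcal{L}_n\rangle\mathcal{L}_n$ is summable to $\iota^{-1}(\{a_n\}_n)=f$ in $G^{\alpha}_{\alpha}(\RR^d_+)$.

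I expect the main obstacle to be the bookkeeping required to verify, with the hypotheses placed in the correct slots, that $G^{\alpha}_{\alpha}(\RR^d_+)$ meets the ultrabornological/webbed requirements of De Wilde's theorem for both $\iota$ and $\iota^{-1}$; once the permanence of these classes under countable inductive limits of Fr\'echet spaces is invoked, everything else is routine.
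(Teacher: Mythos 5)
Your proposal is correct and follows essentially the same route as the paper: Theorem \ref{D Th3.6} supplies bijectivity, the closed graph theorem of De Wilde (with $G^{\alpha}_{\alpha}(\RR^d_+)$ and $\sss^{\alpha}$ each being ultrabornological and webbed) is applied in both directions, and summability is obtained from the same $\sss^{\alpha,b}$-norm estimate. The only cosmetic differences are that you verify closedness of the graph via the coordinate functionals rather than by factoring $\iota$ through $\sss$, and you phrase the summability argument via absolute summability of $\sum_n a_n e_n$ rather than the paper's Cauchy-type estimate; both variants are sound.
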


\begin{proof} If we consider $\iota$ as a linear mapping from $G^{\alpha}_{\alpha}(\RR^d_+)$ into $\sss$ ($\sss^{\alpha}$ is
canonically injected into $\sss$) then $\iota$ is continuous since
it decomposes as $\ds G^{\alpha}_{\alpha}(\RR^d_+)\longrightarrow
\SSS(\RR^d_+)\xrightarrow{f\mapsto \{\langle
f,\mathcal{L}_n\rangle\}_n} \sss$, where the first mapping is the
canonical inclusion. Hence, $\iota$ has a closed graph in
$G^{\alpha}_{\alpha}(\RR^d_+)\times \sss$. Since the range of
$\iota$ is in $\sss^{\alpha}$ and $\sss^{\alpha}$ is continuously
injected into $\sss$, the graph of $\iota$ is closed in
$G^{\alpha}_{\alpha}(\RR^d_+)\times \sss^{\alpha}$.
$G^{\alpha}_{\alpha}(\RR^d_+)$ is injective inductive limit of
$(F)$-spaces. For this reason, $G^{\alpha}_{\alpha}(\RR^d_+)$ is
ultrabornological (cf. \cite[Theorem 7, p. 72]{kothe2}; every
$(F)$-space is ultrabornological). Moreover, $\sss^{\alpha}$ is a
webbed space of De Wilde (see \cite[Theorem 11, p. 64]{kothe2}).
Hence, the closed graph theorem of De Wilde (see \cite[Theorem 2,
p. 57]{kothe2}) implies that $\iota:
G^{\alpha}_{\alpha}(\RR^d_+)\rightarrow \sss^{\alpha}$ is
continuous. Also, $\sss^{\alpha}$ is ultrabornological since it is
bornological and complete and $G^{\alpha}_{\alpha}(\RR^d_+)$ is a
webbed space of De Wilde (cf. \cite[Theorem 8, p. 63]{kothe2};
every $(F)$-space is a webbed space of De Wilde). The mapping
$\iota^{-1}:\sss^{\alpha}\rightarrow
G^{\alpha}_{\alpha}(\RR^d_+)$, which has a closed graph, is
continuous by the De Wilde closed graph theorem (see \cite[Theorem
2, p. 57]{kothe2}). Now, Proposition \ref{lkrrr} implies that
$G^{\alpha}_{\alpha}(\RR^d_+)$ is a $(DFN)$-space and
$(G^{\alpha}_{\alpha}(\RR^d_+))'$ is an $(FN)$-space.\\
\indent Given $f\in G^{\alpha}_{\alpha}(\RR^d_+)$, let
$a_n=\langle f,\mathcal{L}_n\rangle$. For each finite
$\Phi\subseteq \NN^d_0$, denote $f_{\Phi}= \sum_{n\in \Phi}
a_n\mathcal{L}_n\in G^{\alpha}_{\alpha}(\RR^d_+)$ (since
$\mathcal{L}_n\in G^{\alpha}_{\alpha}(\RR^d_+)$). Let $a>1$ be
such that $\iota(f)\in \sss^{\alpha,a}$. Fix $1<a'<a$. One easily
verifies that for each $\varepsilon>0$ there exists finite
$\Phi_0\subseteq \NN^d_0$ such that for each finite
$\Phi\subseteq\NN^d_0$, satisfying $\Phi_0\subseteq \Phi$, we have
$\|\iota(f)-\iota(f_{\Phi})\|_{\sss^{\alpha,a'}}\leq \varepsilon$.
Since $\iota$ is an isomorphism this implies that for each
neighbourhood of zero $V\subseteq G^{\alpha}_{\alpha}(\RR^d_+)$
there exists finite $\Phi_0\subseteq \NN^d_0$ such that for finite
$\Phi\supseteq \Phi_0$ we have $f-f_{\Phi}\in V$, i.e.
$\sum_{n\in\NN^d_0} a_n\mathcal{L}_n$ is summable to $f$ in
$G^{\alpha}_{\alpha}(\RR^d_+)$. \qed
\end{proof}

\begin{thm}\label{D ocena koef za dual}
Let $\alpha\geq 1$. The mapping $\tilde{\iota}:
(G^{\alpha}_{\alpha}(\RR^d_+))'\rightarrow (\sss^{\alpha})'$,
$\tilde{\iota}(T)=\{\langle
T,\mathcal{L}_n\rangle\}_{n\in\NN^d_0}$, is a topological
isomorphism. Moreover, $\sum_{n\in\NN^d_0}\langle
T,\mathcal{L}_n\rangle \mathcal{L}_n$ is summable to $T$ in
$(G^{\alpha}_{\alpha}(\RR^d_+))'$.
\end{thm}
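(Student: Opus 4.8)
The plan is to deduce the theorem from Theorem \ref{isoss} by a duality argument, exhibiting $\tilde{\iota}$ as the inverse of the transpose of $\iota$. Since $\iota: G^{\alpha}_{\alpha}(\RR^d_+)\rightarrow \sss^{\alpha}$ is a topological isomorphism by Theorem \ref{isoss}, its transpose $\iota^t:(\sss^{\alpha})'\rightarrow (G^{\alpha}_{\alpha}(\RR^d_+))'$ is a topological isomorphism for the strong dual topologies: a topological isomorphism carries bounded sets bijectively onto bounded sets, so the strong topologies on the two duals correspond under $\iota^t$, and $(\iota^{-1})^t=(\iota^t)^{-1}$ supplies the continuous inverse.

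Next I would compute $\iota^t$ explicitly. Using the realization of the pairing between $(\sss^{\alpha})'$ and $\sss^{\alpha}$ recalled in Section 2, namely $\langle \{b_n\}_n,\{c_n\}_n\rangle=\sum_n b_nc_n$, the transpose acts by
\[
\langle \iota^t(\{b_n\}_n),f\rangle=\langle \{b_n\}_n,\iota(f)\rangle=\sum_{n\in\NN^d_0} b_n\langle f,\mathcal{L}_n\rangle,\quad f\in G^{\alpha}_{\alpha}(\RR^d_+).
\]
Given $T\in (G^{\alpha}_{\alpha}(\RR^d_+))'$, set $\{b_n\}_n=(\iota^t)^{-1}(T)$; then $\langle T,f\rangle=\sum_n b_n\langle f,\mathcal{L}_n\rangle$ for every $f$, and testing against $f=\mathcal{L}_m$ together with the orthonormality $\langle \mathcal{L}_n,\mathcal{L}_m\rangle=\delta_{nm}$ (the $\mathcal{L}_n$ are real and form an orthonormal basis of $L^2(\RR^d_+)$) yields $b_m=\langle T,\mathcal{L}_m\rangle$. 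Hence $(\iota^t)^{-1}(T)=\{\langle T,\mathcal{L}_n\rangle\}_n=\tilde{\iota}(T)$, so $\tilde{\iota}=(\iota^t)^{-1}$ is a topological isomorphism; in particular the coefficient sequence of any $T$ automatically lies in $(\sss^{\alpha})'$.

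For the summability statement I would transport the problem through the isomorphism $\tilde{\iota}$. For a finite $\Phi\subseteq \NN^d_0$ put $T_\Phi=\sum_{n\in\Phi}\langle T,\mathcal{L}_n\rangle \mathcal{L}_n$, regarded as an element of $(G^{\alpha}_{\alpha}(\RR^d_+))'$ via the bilinear $L^2$-pairing; pairing with $\mathcal{L}_m$ and using orthonormality gives that $\tilde{\iota}(T_\Phi)$ is exactly the truncation of $\tilde{\iota}(T)$ to $\Phi$ (its $m$-th coordinate is $\langle T,\mathcal{L}_m\rangle$ for $m\in\Phi$ and $0$ otherwise). Writing $b_n=\langle T,\mathcal{L}_n\rangle$, each defining seminorm of $(\sss^{\alpha})'$ is $\|\{b_n\}_n\|_{(\sss^{\alpha})',a}=\sum_n|b_n|a^{-|n|^{1/\alpha}}$ with $a>1$, and since this series converges absolutely the tail $\sum_{n\notin\Phi}|b_n|a^{-|n|^{1/\alpha}}$ can be made arbitrarily small by enlarging $\Phi$. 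Thus the truncations are summable to $\tilde{\iota}(T)$ in $(\sss^{\alpha})'$, and applying the isomorphism $\tilde{\iota}^{-1}=\iota^t$ shows that $\sum_n\langle T,\mathcal{L}_n\rangle \mathcal{L}_n$ is summable to $T$ in $(G^{\alpha}_{\alpha}(\RR^d_+))'$.

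The routine parts are the explicit computation of the transpose and the orthonormality bookkeeping. The one point deserving care is the standard but nontrivial fact that the transpose of a topological isomorphism is again a topological isomorphism for the strong dual topologies; since $G^{\alpha}_{\alpha}(\RR^d_+)$ is a $(DFN)$-space and $(\sss^{\alpha})'$ an $(FN)$-space (Theorem \ref{isoss} and Proposition \ref{lkrrr}), this is unproblematic, but it is the step on which the whole argument hinges.
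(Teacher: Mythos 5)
Your proposal is correct and follows essentially the same route as the paper: the paper likewise realizes $\tilde{\iota}$ as $({}^t\iota)^{-1}$ using Theorem \ref{isoss}, identifies the coefficients by pairing with $\iota(\mathcal{L}_n)$ via orthonormality, and obtains summability by transporting the (absolutely convergent) truncations from $(\sss^{\alpha})'$ back through the isomorphism. No gaps.
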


\begin{proof} By Theorem \ref{isoss}, both the transpose of $\iota$, ${}^t\iota:(\sss^{\alpha})'\rightarrow
(G^{\alpha}_{\alpha}(\RR^d_+))'$, and its inverse
$({}^t\iota)^{-1}:(G^{\alpha}_{\alpha}(\RR^d_+))'\rightarrow
(\sss^{\alpha})'$ are topological isomorphisms. For $T\in
(G^{\alpha}_{\alpha}(\RR^d_+))'$, let
$\{b_n\}_n=({}^t\iota)^{-1}(T)$. Then \beas \langle
T,\mathcal{L}_n\rangle=\langle
{}^t\iota(\{b_n\}_n),\mathcal{L}_n\rangle=\langle
\{b_n\}_n,\iota(\mathcal{L}_n)\rangle= b_n. \eeas Thus $\{\langle
T,\mathcal{L}_n\rangle\}_n=\{b_n\}_n=({}^t\iota)^{-1}(T)\in(\sss^{\alpha})'$.
Hence $\tilde{\iota}$ is in fact a topological isomorphism
$({}^t\iota)^{-1}: (G^{\alpha}_{\alpha}(\RR^d_+))'\rightarrow
(\sss^{\alpha})'$. Now, by the similar approach as above, one
proves that $\sum_{n\in\NN^d_0}\langle T,\mathcal{L}_n\rangle
\mathcal{L}_n$ is summable to $T$ in
$(G^{\alpha}_{\alpha}(\RR^d_+))'$. \qed
\end{proof}

For $T\in (G^{\alpha}_{\alpha}(\RR^d_+))'$, by Lemma
\ref{analyticitt},
$\mathcal{F}_{\mathbf{D}}T\in\mathcal{O}(\mathbf{D})$. Since
$\sum_n \langle T,\mathcal{L}_n\rangle \mathcal{L}_n$ is summable
to $T$ in $(G^{\alpha}_{\alpha}(\RR^d_+))'$, by the same method as
in the proof of Proposition \ref{Theorem C}, one proves the
following result.

\begin{prop}
Let $T\in(G^{\alpha}_{\alpha}(\mathbb{R}_+^d))'$, $\alpha\geq 1$
and $b_n=\langle T,\mathcal{L}_n\rangle$, $n\in\mathbb{N}^d_0$.
Then, \beas
\mathcal{F}_\mathbf{D}(T)(w)=\prod_{j=1}^d(1-w_j)\sum_{n\in\mathbb{N}^d_0}b_{n}w^{n},\;w\in
\mathbf{D}. \eeas In particular, if $\mathcal{F}_{\mathbf{D}}T=0$
then $T=0$.
\end{prop}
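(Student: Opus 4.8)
The plan is to mirror the argument of Proposition~\ref{Theorem C}, replacing Theorem~\ref{razvoj u S+'} by its $G$-type analogue from the previous theorem. First I would recall from Lemma~\ref{analyticitt} that for $T\in(G^{\alpha}_{\alpha}(\RR^d_+))'$ one has $\mathcal{F}_{\mathbf{D}}T(w)=\langle T(t),\prod_{l=1}^d e^{-\frac{1}{2}\frac{1+w_l}{1-w_l}t_l}\rangle$, and that the exponential $t\mapsto \prod_{l=1}^d e^{-\frac{1}{2}\frac{1+w_l}{1-w_l}t_l}$ is a genuine element of $G^{\alpha}_{\alpha}(\RR^d_+)$ for each $w\in\mathbf{D}$ (it is exactly the function $e^{-2\pi izt}$ with $z=\Omega(w)\in\mathbf{\Pi}$, whose membership was checked in the discussion preceding Lemma~\ref{analyticitt}).

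Next I would invoke Theorem~\ref{D ocena koef za dual}, which gives that $\sum_{n}b_n\mathcal{L}_n$ is summable to $T$ in $(G^{\alpha}_{\alpha}(\RR^d_+))'$. Since evaluation at a fixed element $\varphi\in G^{\alpha}_{\alpha}(\RR^d_+)$ is a continuous linear functional on the strong dual, summability in the strong topology lets me pass $T$ through the series term by term, yielding
\[
\mathcal{F}_{\mathbf{D}}T(w)=\sum_{n\in\NN^d_0} b_n\int_{\RR^d_+}\mathcal{L}_n(t)\prod_{l=1}^d e^{-\frac{1}{2}\frac{1+w_l}{1-w_l}t_l}\,dt,\qquad w\in\mathbf{D}.
\]

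Then I would evaluate the individual integrals exactly as in Proposition~\ref{Theorem C}, using $\mathcal{L}_n(t)=\prod_{l=1}^d L_{n_l}(t_l)e^{-t_l/2}$ together with the Laplace transform of the Laguerre polynomials $\int_0^\infty L_n(t)e^{-st}\,dt=(s-1)^n/s^{n+1}$ (the case $\gamma=0$ of the formula used in Proposition~\ref{Theorem C}). Absorbing the factor $e^{-t_l/2}$, the relevant parameter is $s_l=\frac{1}{2}\frac{1+w_l}{1-w_l}+\frac{1}{2}=\frac{1}{1-w_l}$, and a one-line computation gives $s_l-1=\frac{w_l}{1-w_l}$, so each factor equals $w_l^{n_l}(1-w_l)$. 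Multiplying over $l$ produces $\bigl(\prod_{j=1}^d(1-w_j)\bigr)w^n$, and summing against $b_n$ gives the claimed identity. I do not expect a serious obstacle: the only delicate point is the term-by-term passage of $T$ through the series, which is justified purely by strong summability and the continuity of point evaluation, the rest being bookkeeping in the Laguerre--Laplace transform.

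Finally, for the ``in particular'' statement, I would note that $\prod_{j=1}^d(1-w_j)$ never vanishes on $\mathbf{D}$ (as $|w_j|<1$ forces $w_j\neq 1$); hence $\mathcal{F}_{\mathbf{D}}T=0$ forces the holomorphic function $\sum_n b_n w^n$ to vanish identically on $\mathbf{D}$, so every Taylor coefficient $b_n$ is zero. Since the map $\tilde{\iota}\colon(G^{\alpha}_{\alpha}(\RR^d_+))'\to(\sss^{\alpha})'$ of Theorem~\ref{D ocena koef za dual} is an isomorphism sending $T$ to $\{b_n\}_n$, the vanishing of all $b_n$ gives $T=0$.
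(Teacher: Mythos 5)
Your proposal is correct and follows essentially the same route as the paper, which itself establishes this proposition by combining Lemma \ref{analyticitt}, the summability of $\sum_n\langle T,\mathcal{L}_n\rangle\mathcal{L}_n$ to $T$ in $(G^{\alpha}_{\alpha}(\RR^d_+))'$ from Theorem \ref{D ocena koef za dual}, and the same Laguerre--Laplace transform computation as in Proposition \ref{Theorem C}. Your evaluation $s_l=\frac{1}{1-w_l}$, $s_l-1=\frac{w_l}{1-w_l}$ matches the paper's computation under the substitution $z=\Omega(w)$, and the argument for the injectivity statement is sound.
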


Now, we state the kernel theorems:

\begin{thm}\label{Thm. tenzorski proizvod G }
Let $\alpha\geq 1$. We have the following canonical isomorphism:
$$G_\alpha^\alpha(\mathbb{R}^{d_1}_+)\hat{\otimes}
G_\alpha^\alpha(\mathbb{R}^{d_2}_+)\cong
G_\alpha^\alpha(\mathbb{R}^{d_1+d_2}_+),\,\,
(G_\alpha^\alpha(\mathbb{R}^{d_1}_+))'\hat{\otimes}
(G_\alpha^\alpha(\mathbb{R}^{d_2}_+))'\cong
(G_\alpha^\alpha(\mathbb{R}^{d_1+d_2}_+))'.$$
\end{thm}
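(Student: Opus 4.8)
The plan is to reduce the kernel theorem for the $G$-type spaces to the corresponding statement for the sequence spaces $\sss^{\alpha}$, exploiting the topological isomorphisms established in Theorems \ref{isoss} and \ref{D ocena koef za dual}. The key structural fact is that $\iota: G^{\alpha}_{\alpha}(\RR^d_+)\rightarrow \sss^{\alpha}$ is a topological isomorphism, and (by Proposition \ref{lkrrr}) $\sss^{\alpha}$ is a $(DFN)$-space; in particular it is nuclear. Since the completed $\pi$- and $\epsilon$-tensor products of nuclear spaces coincide, the entire difficulty collapses onto proving the single sequence-space identity
\begin{equation}\label{seqker}
\sss^{\alpha}(\NN^{d_1}_0)\hat{\otimes}\sss^{\alpha}(\NN^{d_2}_0)\cong \sss^{\alpha}(\NN^{d_1+d_2}_0),
\end{equation}
together with its dual counterpart for $(\sss^{\alpha})'$.

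First I would verify \eqref{seqker} directly. A sequence $\{a_n\}_{n\in\NN^{d_1+d_2}_0}$ lies in $\sss^{\alpha}(\NN^{d_1+d_2}_0)$ precisely when $|a_n|a^{|n|^{1/\alpha}}$ is bounded for some $a>1$; writing $n=(n',n'')$ with $n'\in\NN^{d_1}_0$, $n''\in\NN^{d_2}_0$, the natural map sends the Laguerre basis element $\mathcal{L}_{n'}\otimes\mathcal{L}_{n''}$ to $\mathcal{L}_{(n',n'')}$. The inequality $|(n',n'')|^{1/\alpha}\le |n'|^{1/\alpha}+|n''|^{1/\alpha}\le 2|(n',n'')|^{1/\alpha}$ (using the sub-additivity and super-additivity estimates recorded at the start of Section 3) shows that the projective cross-norm on $\sss^{\alpha,a}(\NN^{d_1}_0)\otimes \sss^{\alpha,a}(\NN^{d_2}_0)$ is equivalent, up to adjusting the parameter $a$, to the norm of $\sss^{\alpha,a}(\NN^{d_1+d_2}_0)$, so the two inductive-limit spaces carry the same bornology and hence the same topology. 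This establishes \eqref{seqker}, and transporting it through $\iota^{-1}\hat{\otimes}\iota^{-1}$ yields the first isomorphism of the theorem.

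For the dual isomorphism I would combine the nuclearity with the transpose functor. Since $G^{\alpha}_{\alpha}(\RR^{d_j}_+)$ are $(DFN)$-spaces, they are reflexive and nuclear, so by the standard duality for tensor products of nuclear $(DF)$-spaces one has $(G^{\alpha}_{\alpha}(\RR^{d_1}_+)\hat{\otimes} G^{\alpha}_{\alpha}(\RR^{d_2}_+))' \cong (G^{\alpha}_{\alpha}(\RR^{d_1}_+))'\hat{\otimes}(G^{\alpha}_{\alpha}(\RR^{d_2}_+))'$; applying the already-proved first isomorphism to the left side identifies this with $(G^{\alpha}_{\alpha}(\RR^{d_1+d_2}_+))'$. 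Alternatively, since Theorem \ref{D ocena koef za dual} gives $(G^{\alpha}_{\alpha}(\RR^d_+))'\cong (\sss^{\alpha})'$, I would instead prove the dual sequence identity $(\sss^{\alpha})'(\NN^{d_1}_0)\hat{\otimes}(\sss^{\alpha})'(\NN^{d_2}_0)\cong (\sss^{\alpha})'(\NN^{d_1+d_2}_0)$ by the same norm-comparison argument applied to the $(FN)$-space seminorms $\|\cdot\|_{(\sss^{\alpha})',a}$ described in Proposition \ref{lkrrr} and the proposition following it, and then transport back.

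The main obstacle I expect is not conceptual but bookkeeping: confirming that the tensor-product topology matches the native topology of the higher-dimensional sequence space \emph{uniformly} across the inductive (or projective) spectrum, i.e. controlling how the parameter $a>1$ must be shrunk or enlarged when passing between $|n|^{1/\alpha}$ and $|n'|^{1/\alpha}+|n''|^{1/\alpha}$. Because $\sss^{\alpha}$ is nuclear, the choice of $\pi$ versus $\epsilon$ topology is irrelevant and the completions are automatic, so once the norm equivalences are pinned down the isomorphisms follow. The only point requiring genuine care is that the canonical algebraic map induced on the dense subspace spanned by finite Laguerre combinations extends to the stated topological isomorphism of completions, which is guaranteed by the density established together with the isomorphism $\iota$.
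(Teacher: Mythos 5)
Your overall strategy coincides with the paper's: transport everything through the isomorphism $\iota$ of Theorem \ref{isoss} onto the sequence spaces, prove the kernel theorem there using nuclearity to conflate the $\pi$ and $\epsilon$ topologies, and dualize. The paper, however, carries out the hard estimate on the Fr\'echet side: it first proves $(\sss^{\alpha}_{d_1})'\hat{\otimes}(\sss^{\alpha}_{d_2})'\cong(\sss^{\alpha})'$ by sandwiching the topology induced from $(\sss^{\alpha})'$ between the $\pi$ and $\epsilon$ topologies on the algebraic tensor product of these $(FN)$-spaces, and only then dualizes to obtain $\sss^{\alpha}_{d_1}\hat{\otimes}\sss^{\alpha}_{d_2}\cong\sss^{\alpha}$. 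This is precisely the route you list as your ``alternative,'' and it is the clean one.

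Your primary route --- comparing norms level by level on the inductive spectrum $\sss^{\alpha,a}$ --- has two gaps as written. First, the exponent inequality $|(n',n'')|^{1/\alpha}\le|n'|^{1/\alpha}+|n''|^{1/\alpha}\le 2|(n',n'')|^{1/\alpha}$ does not by itself give an equivalence between the sup-norm of $\sss^{\alpha,a}(\NN^{d_1+d_2}_0)$ and a projective cross-norm: the easy direction (continuity of the canonical map into the big sequence space) follows, but to dominate the projective norm of a finitely supported sequence $\{c_{(n',n'')}\}$ by its sup-norm you must expand $c=\sum c_{(n',n'')}\,e_{n'}\otimes e_{n''}$ and invoke the summability of $\sum_{n}(b^{2}/a)^{|n|^{1/\alpha}}$ for $1<b<\sqrt{a}$ --- essentially the quasi-nuclearity estimate of Proposition \ref{lkrrr} --- which is the real engine of the argument and forces the passage from the parameter $a$ to roughly $\sqrt{a}$. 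Second, and more seriously, the step ``the two inductive-limit spaces carry the same bornology and hence the same topology'' presupposes that the completed projective tensor product commutes with the inductive limits, i.e.\ that every bounded subset of $G^{\alpha}_{\alpha}(\RR^{d_1}_+)\hat{\otimes}G^{\alpha}_{\alpha}(\RR^{d_2}_+)$ is absorbed by the closed absolutely convex hull of a tensor product of bounded sets; for $(DF)$-spaces this is a genuine theorem of Grothendieck, not a formality, and it must be cited or circumvented. Both gaps are patchable (or avoided entirely by your alternative route, which is the paper's actual proof), so the proposal is salvageable but incomplete as it stands.
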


\begin{proof}
For simplicity, put $d=d_1+d_2$. Let $\sss^{\alpha}_{d_1}$,
$\sss^{\alpha}_{d_2}$ and $\sss^{\alpha}$ be the
$d_1$-dimensional, the $d_2$-dimensional and the $d$-dimensional
variant of the space $\sss^{\alpha}$, respectively. Firstly, we
prove that
$(\sss^{\alpha}_{d_1})'\hat{\otimes}(\sss^{\alpha}_{d_2})'\cong
(\sss^{\alpha})'$, where an isomorphism is given by the extension
of the canonical inclusion
$(\sss^{\alpha}_{d_1})'\otimes(\sss^{\alpha}_{d_2})'\rightarrow
(\sss^{\alpha})'$, $\{u_n\}_{n\in\NN^{d_1}_0}\otimes
\{v_m\}_{m\in\NN^{d_2}_0}\mapsto \{u_nv_m\}_{(n,m)\in\NN^d_0}$.
Observe that the mapping

$$\left(\{u_n\}_{n\in\NN^{d_1}_0},
\{v_m\}_{m\in\NN^{d_2}_0}\right)\mapsto
\{u_nv_m\}_{(n,m)\in\NN^d_0}, (\sss^{\alpha}_{d_1})'\times
(\sss^{\alpha}_{d_2})'\rightarrow (\sss^{\alpha})'$$

\noindent is continuous. Hence, the $\pi$ topology on
$(\sss^{\alpha}_{d_1})'\otimes (\sss^{\alpha}_{d_2})'$ is stronger
than the induced one from $(\sss^{\alpha})'$. Let $A$ and $B$ be
the equicontinuous subsets of
$((\sss^{\alpha}_{d_1})')'=\sss^{\alpha}_{d_1}$ and
$((\sss^{\alpha}_{d_2})')'=\sss^{\alpha}_{d_2}$, respectively
($\sss^{\alpha}$ is reflexive since it is a $(DFN)$-space). Hence,
there exist $C>0$ and $r>1$ such that \beas |\langle
\{u_n\}_{n\in\NN^{d_1}_0},\{a_n\}_{n\in\NN^{d_1}_0}\rangle| &\leq&
C\sum_{n\in\NN^{d_1}_0}|u_n|r^{-|n|^{1/\alpha}},\,\,
\forall \{a_n\}_{n\in\NN^{d_1}_0}\in A,\, \forall\{u_n\}_{n\in\NN^{d_1}_0}\in (\sss^{\alpha}_{d_1})'\,\\
|\langle
\{v_m\}_{m\in\NN^{d_2}_0},\{b_m\}_{m\in\NN^{d_2}_0}\rangle| &\leq&
C\sum_{m\in\NN^{d_2}_0}|v_m|r^{-|m|^{1/\alpha}},\,\,
\{b_m\}_{m\in\NN^{d_2}_0}\in B,\, \{v_m\}_{m\in\NN^{d_2}_0}\in
(\sss^{\alpha}_{d_2})'. \eeas Let
$\{\chi_{(n,m)}\}_{(n,m)\in\NN^d_0}=\sum_{j=1}^l\{u^{(j)}_n\}_{n\in\NN^{d_1}_0}\otimes\{v^{(j)}_m\}_{m\in\NN^{d_2}_0}\in
(\sss^{\alpha}_{d_1})'\otimes(\sss^{\alpha}_{d_2})'$. Then, for
$\{a_n\}_{n\in\NN^{d_1}_0}\in A$ and
$\{b_m\}_{m\in\NN^{d_2}_0}\in B$, we have\\
\\
$\left|\left\langle
\{\chi_{(n,m)}\}_{(n,m)},\{a_n\}_n\otimes\{b_m\}_m\right\rangle\right|$
\beas &=&\left|\left\langle
\sum_{j=1}^l\{u^{(j)}_n\}_n\left\langle
\{v^{(j)}_m\}_m,\{b_m\}_m\right\rangle,
\{a_n\}_n\right\rangle\right|=\left|\left\langle
\left\{\sum_{j=1}^l\left\langle
\{v^{(j)}_m\}_m,\{b_m\}_m\right\rangle u^{(j)}_n\right\}_n,
\{a_n\}_n\right\rangle\right|\\
&\leq& C\sum_{n\in\NN^{d_1}_0}\left|\sum_{j=1}^l \left\langle
\{v^{(j)}_m\}_m,\{b_m\}_m\right\rangle u^{(j)}_n\right|
r^{-|n|^{1/\alpha}}\\
&=&C\sum_{n\in\NN^{d_1}_0}\left|\left\langle \left\{\sum_{j=1}^l
u^{(j)}_nv^{(j)}_m\right\}_m,\{b_m\}_m\right\rangle \right|
r^{-|n|^{1/\alpha}} \leq C^2\sum_{(n,m)\in\NN^d}\left|\sum_{j=1}^l u^{(j)}_nv^{(j)}_m\right|r^{-|n|^{1/\alpha}-|m|^{1/\alpha}}\\
&\leq&
C^2\left\|\{\chi_{(n,m)}\}_{(n,m)}\right\|_{(\sss^{\alpha})',r}.
\eeas We can conclude that the $\epsilon$ topology on
$(\sss^{\alpha}_{d_1})'\otimes (\sss^{\alpha}_{d_2})'$ is weaker
than the induced one from $(\sss^{\alpha})'$. Since
$(\sss^{\alpha})'$ is nuclear, these topologies are identical.
Clearly, $(\sss^{\alpha}_{d^1})'\otimes (\sss^{\alpha}_{d_2})'$ is
dense in $(\sss^{\alpha}_d)'$. Hence, we proved the desired
topological isomorphism. As all spaces in consideration are
$(FN)$-spaces, by duality we have
$\sss^{\alpha}_{d_1}\hat{\otimes} \sss^{\alpha}_{d_2}\cong
\sss^{\alpha}$. Note that the isomorphism is in fact the extension
of the canonical inclusion $\kappa:\sss^{\alpha}_{d_1}\otimes
\sss^{\alpha}_{d_2}\rightarrow \sss^{\alpha}$,
$\kappa(\{a_n\}_n\otimes \{b_m\}_m)=\{a_nb_m\}_{(n,m)}$. Now
observe that the diagram
\begin{center}
\begin{tikzpicture}[normal line/.style={->}]
\matrix (m) [matrix of math nodes, row sep=3em,
column sep=2.5em, text height=1.5ex, text depth=0.25ex]
{\sss^{\alpha}_{d_1}\otimes\sss^{\alpha}_{d_2} & \sss^{\alpha} \\
G^{\alpha}_{\alpha}(\RR^{d_1}_+)\otimes G^{\alpha}_{\alpha}(\RR^{d_2}_+) & G^{\alpha}_{\alpha}(\RR^d_+) \\ };
\path[normal line]
(m-1-1) edge  node [auto] {$\kappa$} (m-1-2)
(m-2-1) edge  node [auto] {$\iota\otimes \iota$}(m-1-1)

(m-2-1) edge (m-2-2)
(m-2-2) edge node [auto] {$\iota$}(m-1-2);
\end{tikzpicture}
\end{center}
commutes, where the bottom horizontal
line is the canonical inclusion $f\otimes g(x,y)\mapsto f(x)g(y)$.
Since $\kappa$ extends to an isomorphism, by Theorem \ref{isoss},
it follows that the canonical inclusion
$G^{\alpha}_{\alpha}(\RR^{d_1}_+)\otimes
G^{\alpha}_{\alpha}(\RR^{d_2}_+)\rightarrow
G^{\alpha}_{\alpha}(\RR^d_+)$ is continuous and it extends to an
isomorphism $G^{\alpha}_{\alpha}(\RR^{d_1}_+)\hat{\otimes}
G^{\alpha}_{\alpha}(\RR^{d_2}_+)\cong
G^{\alpha}_{\alpha}(\RR^d_+)$. The assertion
$(G^{\alpha}_{\alpha}(\RR^{d_1}_+))'\hat{\otimes}
(G^{\alpha}_{\alpha}(\RR^{d_2}_+))'\cong
(G^{\alpha}_{\alpha}(\RR^d_+))'$ can be obtained by the duality of
an isomorphism $G^{\alpha}_{\alpha}(\RR^{d_1}_+)\hat{\otimes}
G^{\alpha}_{\alpha}(\RR^{d_2}_+)\cong
G^{\alpha}_{\alpha}(\RR^d_+)$ since $G^{\alpha}_{\alpha}(\RR^d_+)$
is a $(DFN)$-space. \qed
\end{proof}

\section{Weyl pseudo-differential operators with radial symbols from the $G$-type spaces and their duals}

Concerning pseudo-differential operators, especially the Weyl
calculus, we refer to the standard books \cite{NR} and
\cite{Shubin}, for example.

Let $f,g\in \SSS(\mathbb{R}^d)$. Then the function $W(f,g)$
defined on $\mathbb{R}^{2d}$ by

$$W(f,g)(x,\xi)=(2\pi)^{-d/2}\int_{\mathbb{R}^d}e^{-i\xi\cdot p}f\left(x+\frac{p}{2}\right)
\overline{g\left(x-\frac{p}{2}\right)}dp,\quad
x,\xi\in\mathbb{R}^d$$

\noindent is called the Wigner transform of $f$ and $g$. The
bilinear mapping $(f,g)\mapsto W(f,\overline{g})$,
$\SSS(\mathbb{R}^d)\times \SSS(\mathbb{R}^d)\rightarrow
\SSS(\mathbb{R}^{2d})$ is continuous and (cf. \cite[Corollary
3.4]{Wong}) can be extended uniquely to a bilinear operator
$L^2(\mathbb{R}^d)\times L^2(\mathbb{R}^d)\rightarrow
L^2(\mathbb{R}^{2d})$ such that

$$\|W(f,g)\|_{L^2(\mathbb{R}^{2d})}=\|f\|_{L^2(\mathbb{R}^{d})}\|g\|_{L^2(\mathbb{R}^{d})},\;f,g\in L^2(\mathbb{R}^{d}).$$

\noindent Let $f,g\in \SSS_\alpha^\alpha(\mathbb{R}^d)$,
$\alpha\geq 1/2$. By \cite[Theorem 3.8, p. 179]{teofanov},
$W(f,g)\in \SSS_\alpha^\alpha(\mathbb{R}^{2d})$. Moreover, we have
the following proposition.

\begin{prop}\label{concccwig}
A bilinear mapping $(f,g)\mapsto W(f,\overline{g})$,
$\SSS^{\alpha}_{\alpha}(\RR^d)\times \SSS^{\alpha}_{\alpha}(\RR^d)
\rightarrow\SSS^{\alpha}_{\alpha}(\RR^{2d})$, is continuous.
\end{prop}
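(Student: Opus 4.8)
The plan is to reduce the joint continuity to a single Banach-space estimate on the defining steps, and then to expand $W(f,\overline g)$ by means of the covariance of the Wigner transform, controlling every resulting piece through the $L^2$-isometry recalled just above the statement. Since $\SSS^\alpha_\alpha(\RR^d)=\varinjlim_A\SSS^{\alpha,A}_{\alpha,A}(\RR^d)$ is a $(DFS)$-space, the product $\SSS^\alpha_\alpha(\RR^d)\times\SSS^\alpha_\alpha(\RR^d)$ carries the inductive-limit topology of the Banach products $\SSS^{\alpha,A}_{\alpha,A}(\RR^d)\times\SSS^{\alpha,A}_{\alpha,A}(\RR^d)$; hence it suffices to show that for each $A>0$ there are $B,C>0$ with
$$\left\|W(f,\overline g)\right\|_{\SSS^{\alpha,B}_{\alpha,B}(\RR^{2d})}\leq C\,\|f\|_{\SSS^{\alpha,A}_{\alpha,A}(\RR^d)}\,\|g\|_{\SSS^{\alpha,A}_{\alpha,A}(\RR^d)}.$$
By \cite[Theorem 3.8, p. 179]{teofanov} we already know $W(f,\overline g)\in\SSS^\alpha_\alpha(\RR^{2d})$, so only the \emph{uniform quantitative} bound is missing. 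Writing $y=(x,\xi)\in\RR^{2d}$ and $M=(M_x,M_\xi)$, $N=(N_x,N_\xi)\in\NN^{2d}_0$, I will estimate $\|y^M D_y^N W(f,\overline g)\|_{L^2(\RR^{2d})}$.

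The second step is to record, by differentiating under the integral sign in the definition of $W$, the four covariance identities (writing $M_j$ for multiplication by the $j$-th coordinate): $\partial_{x_j}W(u,v)=W(\partial_j u,v)+W(u,\partial_j v)$, $\partial_{\xi_j}W(u,v)=-i\bigl(W(M_j u,v)-W(u,M_j v)\bigr)$, $x_j W(u,v)=\tfrac12\bigl(W(M_j u,v)+W(u,M_j v)\bigr)$ and $\xi_j W(u,v)=\tfrac{i}{2}\bigl(W(u,\partial_j v)-W(\partial_j u,v)\bigr)$. Iterating these, $y^M D_y^N W(f,\overline g)$ becomes a sum of at most $2^{|M|+|N|}$ terms $c\,W(T_1 f,\overline{T_2 g})$, with constants $c$ of modulus at most one, where $T_1,T_2$ are words in $\{M_j,\partial_j\}$ carrying together exactly $|N_x|+|M_\xi|$ differentiations and $|M_x|+|N_\xi|$ multiplications.

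The third step applies the isometry $\|W(u,v)\|_{L^2(\RR^{2d})}=\|u\|_{L^2(\RR^d)}\|v\|_{L^2(\RR^d)}$ to bound each term by $\|T_1 f\|_{L^2}\,\|T_2 g\|_{L^2}$. Each word $T_i$ is normal-ordered into monomials $u^\beta\partial^\gamma$ via $[\partial_j,M_j]=I$, the coefficients being dominated by those of the extreme product $\partial^r M^s$, and then $\|u^\beta\partial^\gamma f\|_{L^2}\leq A^{|\beta|+|\gamma|}\beta!^\alpha\gamma!^\alpha\|f\|_{\SSS^{\alpha,A}_{\alpha,A}}$ directly from the definition of the step norm. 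Summing the normal-ordering series and using $r!/(r-k)!\leq r^k$ together with $(rs)^{1-\alpha}\leq r+s$, valid precisely for $\alpha\geq1/2$, collapses the commutator corrections to geometric (not super-exponential) growth; the elementary factorial inequalities $p!\,q!\leq(p+q)!\leq 2^{p+q}p!\,q!$ and $M!\leq|M|!\leq d^{|M|}M!$ then recombine everything into $C^{|M|+|N|}M!^\alpha N!^\alpha\|f\|_A\|g\|_A$, which is exactly the step estimate for a suitable $B$.

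I expect the main obstacle to be the factorial and combinatorial bookkeeping of the normal-ordering step: making the constants uniform in $f,g$ and verifying that the commutator corrections grow only geometrically in $|M|+|N|$ across the full range $\alpha\geq1/2$. This is the only place where the threshold $\alpha\geq1/2$ genuinely enters, through the inequality $(rs)^{1-\alpha}\leq r+s$; alternatively one may simply invoke the continuity of the operators $x_j\cdot$ and $\partial_j$ on $\SSS^\alpha_\alpha$ with controlled constants as in \cite[Chapter 6]{NR}. The reduction of the first paragraph and the covariance identities of the second are routine.
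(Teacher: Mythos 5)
Your proof is correct in outline, but it takes a genuinely different route from the paper. The paper's proof is soft: it fixes one argument, observes that $f\mapsto W(f,\overline{g})$ factors continuously through $\SSS(\RR^d)\rightarrow\SSS(\RR^{2d})$ and therefore has closed graph, invokes the cited result of Teofanov to know the range lies in $\SSS^{\alpha}_{\alpha}(\RR^{2d})$, applies De Wilde's closed graph theorem to get separate continuity, and concludes joint continuity from the fact that $\SSS^{\alpha}_{\alpha}(\RR^d)$ is a barrelled $(DF)$-space. You instead prove the quantitative step estimate $\|W(f,\overline{g})\|_{\SSS^{\alpha,B}_{\alpha,B}(\RR^{2d})}\leq C\|f\|_{\SSS^{\alpha,A}_{\alpha,A}(\RR^d)}\|g\|_{\SSS^{\alpha,A}_{\alpha,A}(\RR^d)}$ directly, via the covariance identities of the Wigner transform, the Moyal isometry, and normal ordering; your identities are correct (I checked all four), and the commutator bookkeeping does close up exactly as you claim, the threshold $\alpha\geq 1/2$ entering through $(rs)^{1-\alpha}\leq r+s$ so that the corrections $\sum_k (r+s)^k/k!\leq e^{r+s}$ stay geometric. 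What your approach buys is self-containedness and explicit constants: you do not need the Teofanov inclusion as a black box (you reprove it with control of $B$ in terms of $A$), nor the De Wilde machinery. What it costs is the combinatorial work, and one imprecision in your first paragraph: continuity of a bilinear map is not characterized by continuity of its restrictions to the steps of an inductive limit (the universal property of $\varinjlim$ applies to linear maps only), so "it suffices to show the step estimate" needs justification. The cleanest repair is exactly the paper's last step: your uniform step estimates give separate continuity of $W(\cdot,\overline{\,\cdot\,})$ on $\SSS^{\alpha}_{\alpha}(\RR^d)\times\SSS^{\alpha}_{\alpha}(\RR^d)$, and separate continuity implies joint continuity because $\SSS^{\alpha}_{\alpha}(\RR^d)$ is a barrelled $(DF)$-space (\cite[Theorem 11, p. 161]{kothe2}); alternatively one can argue directly with absolutely convex hulls of the step balls. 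With that repair your argument is complete.
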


\begin{proof} Fix $g\in\SSS^{\alpha}_{\alpha}(\RR^d)$. If we consider a mapping $f\mapsto W(f,\overline{g})$ as a mapping
from $\SSS^{\alpha}_{\alpha}(\RR^d)$ into $\SSS(\RR^{2d})$ it is
continuous since it decomposes as $\ds
\SSS^{\alpha}_{\alpha}(\RR^d)\longrightarrow
\SSS(\RR^d)\xrightarrow{f\mapsto W(f,\overline{g})}
\SSS(\RR^{2d})$, where the first mapping is the canonical
inclusion. Hence, its graph is closed in
$\SSS^{\alpha}_{\alpha}(\RR^d)\times \SSS(\RR^{2d})$. Since its
image is in $\SSS^{\alpha}_{\alpha}(\RR^{2d})$, its graph is
closed in $\SSS^{\alpha}_{\alpha}(\RR^d)\times
\SSS^{\alpha}_{\alpha}(\RR^{2d})$. As
$\SSS^{\alpha}_{\alpha}(\RR^d)$ is a $(DFS)$-space it is an
ultrabornological and webbed space of De Wilde (cf. \cite[Theorem
11, p. 64]{kothe2}). Now, the De Wilde closed graph theorem (see
\cite[Theorem 2, p. 57]{kothe2}) implies its continuity.
Similarly, for each fixed $f\in\SSS^{\alpha}_{\alpha}(\RR^d)$, the
mapping $g\mapsto W(f,\overline{g})$,
$\SSS^{\alpha}_{\alpha}(\RR^d)\rightarrow
\SSS^{\alpha}_{\alpha}(\RR^{2d})$ is continuous. Thus the bilinear
mapping $(f,g)\mapsto W(f,\overline{g})$,
$\SSS^{\alpha}_{\alpha}(\RR^d)\times \SSS^{\alpha}_{\alpha}(\RR^d)
\rightarrow\SSS^{\alpha}_{\alpha}(\RR^{2d})$, is separately
continuous and hence continuous since
$\SSS^{\alpha}_{\alpha}(\RR^d)$ is barrelled $(DF)$-space (cf.
\cite[Theorem 11, p. 161]{kothe2}). \qed
\end{proof}

Recall, for $\sigma\in \SSS(\mathbb{R}^{2d})$ the Weyl
pseudo-differential operator with symbol $\sigma$ is defined by
\begin{equation}\label{Wong Weyl pseudo-differential operators}
(W_\sigma
f)(x)=(2\pi)^{-d}\int_{\mathbb{R}^d}\int_{\mathbb{R}^d}e^{i(x-y)\cdot\xi}\sigma\left(\frac{x+y}{2},\xi\right)f(y)dyd\xi,\quad
f\in \SSS(\mathbb{R}^d).
\end{equation}
\noindent and can be extended on $(\SSS(\mathbb{R}^d))'$ as a
linear and continuous operator from $(\SSS(\mathbb{R}^d))'$ into
itself. Let $\alpha\geq 1/2$. The Weyl pseudo-differential
operator with a symbol
$\sigma\in(\SSS_\alpha^\alpha(\mathbb{R}^{2d}))'$ defined by
\begin{equation}\label{Wong Th.12.1}
(W_\sigma
f)(g)=(2\pi)^{-d/2}\langle\sigma,W(f,\overline{g})\rangle
\end{equation}
is a continuous and linear mapping from
$\SSS_\alpha^\alpha(\mathbb{R}^d)$ into
$(\SSS_\alpha^\alpha(\mathbb{R}^d))'$.\\
\indent Our goal is to analyse the Weyl pseudo-differential
operator on $\SSS^{\alpha}_{\alpha}(\RR^d)$
and $(\SSS^{\alpha}_{\alpha}(\RR^d))'$, $\alpha\geq 1/2$, when its symbol originates from $(G^{2\alpha}_{2\alpha}(\RR^d_+))'$, $\alpha\geq 1/2$.\\
\indent Throughout the rest of this section, we denote by $v$ the
mapping $\RR^{2d}\rightarrow \overline{\RR^d_+}$, $(x,\xi)\mapsto
v(x,\xi)=(x_1^2+\xi_1^2,\ldots,x_d^2+\xi_d^2)$.

\begin{prop}\label{Faa di Bruno Prop}
Let $\sigma\in\mathcal{S}(\mathbb{R}^d_+)$. Then
$\tilde{\sigma}(x,\xi)=\sigma\circ v(x,\xi)\in
\mathcal{S}(\mathbb{R}^{2d})$. Moreover, the mapping
$\sigma\mapsto\tilde{\sigma}=\sigma\circ v$,
$\SSS(\RR^d_+)\rightarrow \SSS(\RR^{2d})$, is continuous.
\end{prop}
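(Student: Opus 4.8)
The plan is to first verify that $\tilde\sigma$ is genuinely smooth on all of $\RR^{2d}$, and then to bound every Schwartz seminorm of $\tilde\sigma$ by finitely many seminorms of $\sigma$; the latter simultaneously establishes $\tilde\sigma\in\SSS(\RR^{2d})$ and the continuity of the linear map, since both spaces are Fr\'echet and a linear map between Fr\'echet spaces is continuous as soon as each target seminorm is dominated by a continuous source seminorm.

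First I would record the elementary features of $v$ that make everything work: each component $v_l(x,\xi)=x_l^2+\xi_l^2$ is a polynomial depending only on the pair $(x_l,\xi_l)$, and only its first-order derivatives $\partial_{x_l}v_l=2x_l$, $\partial_{\xi_l}v_l=2\xi_l$ and its second-order derivatives $\partial_{x_l}^2v_l=\partial_{\xi_l}^2v_l=2$ are nonzero. By the definition of $\SSS(\RR^d_+)$ every $D^m\sigma$, $m\in\NN^d_0$, extends continuously to $\overline{\RR^d_+}$, so each composite $(D^m\sigma)\circ v$ is continuous on $\RR^{2d}$, including across the codimension-two sets $\{x_l=\xi_l=0\}$ where some $v_l$ vanishes. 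On the open dense set $\{v_l>0,\ \forall l\}$ the ordinary chain rule applies; combined with the continuity of the Fa\`a di Bruno expressions up to those exceptional sets and the standard fact that a function which is smooth on an open dense set and whose formal derivatives extend continuously everywhere is globally smooth, this gives $\tilde\sigma\in\mathcal C^\infty(\RR^{2d})$ together with a closed formula for its derivatives.

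Next, applying the Fa\`a di Bruno formula and exploiting the special structure of $v$, I would show that for each multi-index $\beta=(\beta',\beta'')\in\NN^d_0\times\NN^d_0$
$$D^\beta\tilde\sigma(x,\xi)=\sum_{m}(D^m\sigma)(v(x,\xi))\,P_{\beta,m}(x,\xi),$$
a finite sum over $m\in\NN^d_0$ with $m_l\le\beta'_l+\beta''_l$, where each $P_{\beta,m}$ factors over the coordinate pairs and has degree at most $\beta'_l+\beta''_l$ in the variables $(x_l,\xi_l)$. The one-dimensional model $\sigma(x^2+\xi^2)$ already exhibits the exact pattern: differentiating $j$ times produces terms $(D^m\sigma)(v)$ with $\lceil j/2\rceil\le m\le j$ attached to a monomial of degree $2m-j$. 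This bookkeeping of the orders $m_l$ and the polynomial degrees is the main technical step, and I expect it to be the only real difficulty in the argument.

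Finally, I would estimate each term. The Schwartz seminorms of $\sigma$ yield, for every $k\in\NN^d_0$, a bound $|D^m\sigma(t)|\le C\prod_l(1+t_l)^{-k_l}$ on $\overline{\RR^d_+}$, where taking the maximum of the unweighted $(k=0)$ bound and the $t^k$-weighted bound accounts correctly for the behaviour near the boundary. Since $|P_{\beta,m}|$ is dominated in the $l$-th pair by a power of $v_l^{1/2}=(x_l^2+\xi_l^2)^{1/2}$ of degree at most $\beta'_l+\beta''_l$, choosing each $k_l$ large enough renders $v_l^{\,\cdot}(1+v_l)^{-k_l}$ bounded, so that $\sup_{(x,\xi)}|(x,\xi)^\gamma D^\beta\tilde\sigma(x,\xi)|<\infty$ for every $\gamma,\beta$; hence $\tilde\sigma\in\SSS(\RR^{2d})$. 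As each such supremum is bounded by a constant depending only on $\beta,\gamma$ times finitely many seminorms of $\sigma$, the mapping $\sigma\mapsto\tilde\sigma$ is continuous, and the decay estimate itself is then routine.
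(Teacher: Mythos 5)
Your proposal is correct and follows essentially the same route as the paper: compute $D^\beta\tilde\sigma$ as a finite sum of terms $(D^m\sigma)(v(x,\xi))P_{\beta,m}(x,\xi)$ with polynomial coefficients of degree at most $|\beta|$, then absorb the monomial weights $x^{p''}\xi^{q''}$ into powers of $v$ and bound everything by finitely many seminorms $\sup_t|t^mD^n\sigma(t)|$. The only difference is that you explicitly justify smoothness of $\tilde\sigma$ across the codimension-two sets $\{x_l=\xi_l=0\}$ (where $v$ hits $\partial\RR^d_+$), a point the paper passes over in silence; this is a welcome extra care rather than a divergence of method.
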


\begin{proof}
Fix $j\in\NN$. For $p,q\in\NN^d_0$, $|p|\leq j$ and $|q|\leq j$
observe that $D^p_x D^q_{\xi}\tilde{\sigma}(x,\xi)$ is a finite
sum of the form $P(x,\xi)D^{p'}_x D^{q'}_{\xi}\sigma(v(x,\xi))$,
where $P(x,\xi)$ are polynomials in $(x,\xi)$ of degree at most
$|p|+|q|$ which do not depend on $\sigma$ (they only depend on the
derivatives of $v$) and $p',q'\in\NN^d_0$ are such that $p'\leq p$
and $q'\leq q$. Moreover, observe that the number of such terms
that appear in $D^p_x D^q_{\xi}\tilde{\sigma}(x,\xi)$ depend only
on $p$ and $q$ (and not on $\sigma$). For $p'',q''\in\NN^d_0$ we
also have $\left|x^{p''}\xi^{q''}\right|\leq
|x|^{|p''|}|\xi|^{|q''|}\leq (|x|^2+|\xi|^2)^{(|p''|+|q''|)/2}$.
Thus,
$$\sup_{\substack{|p''|\leq j\\ |q''|\leq j}}\sup_{\substack{|p|\leq j\\ |q|\leq j}}
\sup_{(x,\xi)\in\RR^{2d}}\left|x^{p''}\xi^{q''}D^p_x D^q_{\xi}\tilde{\sigma}(x,\xi)\right|\leq C\sup_{\substack{|n|\leq 2j\\
|m|\leq 2j}}\sup_{t\in\RR^d_+}\left|t^m D^n \sigma(t)\right|.$$
Hence, $\tilde{\sigma}\in\mathcal{S}(\mathbb{R}^{2d})$ and the
mapping $\sigma\mapsto\tilde{\sigma}=\sigma\circ v$,
$\SSS(\RR^d_+)\rightarrow \SSS(\RR^{2d})$ is continuous. \qed
\end{proof}

\indent Let $\alpha\geq 1/2$ and $\sigma(\rho)\in
G^{2\alpha}_{2\alpha}(\mathbb{R}^d_+)$. Denote by
$\sigma_0(\rho)=\sigma(2\rho)$, $\rho\in\RR^d_+$. Then the
functions $\tilde{\sigma}$ and $\tilde{\sigma}_0$ defined by
\bea\label{symff} \tilde{\sigma}(x,\xi)=\sigma\circ v(x,\xi),\,\,
\tilde{\sigma}_0(x,\xi)=\sigma_0\circ
v(x,\xi),\;(x,\xi)\in\mathbb{R}^{2d} \eea belong to
$\SSS(\RR^{2d})$ (see Proposition \ref{Faa di Bruno Prop}). Hence,
the Weyl pseudo-differential operator with a symbol
$\tilde{\sigma}_0$ is a continuous mapping
$\SSS^{\alpha}_{\alpha}(\RR^d)\rightarrow
(\SSS^{\alpha}_{\alpha}(\RR^d))'$.

\begin{thm}\label{ter11}
Let $\alpha\geq 1/2$ and $\sigma(\rho)\in
G^{2\alpha}_{2\alpha}(\mathbb{R}^d_+)$. Denote by
$\sigma_0(\rho)=\sigma(2\rho)$, $\rho\in\RR^d_+$. Let
$\tilde{\sigma},\tilde{\sigma}_0\in\SSS(\RR^{2d})$ be the
functions defined in (\ref{symff}). Then the Weyl
pseudo-differential operator $W_{\tilde{\sigma}_0}$ is a
continuous operator $\SSS^{\alpha}_{\alpha}(\RR^d)\rightarrow
\SSS^{\alpha}_{\alpha}(\RR^d)$ and it extends to a continuous
mapping $W_{\tilde{\sigma}_0}:(\SSS^{\alpha}_{\alpha}(\RR^d))'
\rightarrow \SSS^{\alpha}_{\alpha}(\RR^d)$. If
$f,g\in(\mathcal{S}^{\alpha}_{\alpha}(\mathbb{R}^d))'$ and
$$f_k=\langle f,h_k\rangle, g_k=\langle g,h_k\rangle\;\mbox{and}\;
\sigma_k=(2\pi)^{d/2}(-1)^{|k|}2^{-d}
\int_{\mathbb{R}^d_+}\sigma(\rho)\mathcal{L}_{k}(\rho)d\rho,$$
then
$(W_{\tilde{\sigma}_0}f)(g)=(2\pi)^{-d/2}\sum_{k\in\mathbb{N}^d_0}f_kg_k\sigma_k$.
Moreover, if
$\sigma_{0,j}(\eta)\rightarrow\sigma_{0}(\eta)\;\mbox{in}\;
G^{2\alpha}_{2\alpha}(\mathbb{R}^d_+)$ as $j\rightarrow\infty$
then $W_{\tilde{\sigma}_{0,j}}\rightarrow W_{\tilde{\sigma}_0}$ in
the strong topology of
$\mathcal{L}((\SSS^{\alpha}_{\alpha}(\RR^d))',\SSS^{\alpha}_{\alpha}(\RR^d))$.
\end{thm}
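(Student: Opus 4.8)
The plan is to diagonalise $W_{\tilde{\sigma}_0}$ in the Hermite basis and then transport the whole statement to the sequence spaces $\sss^{2\alpha}$ and $(\sss^{2\alpha})'$ through the isomorphisms of Proposition \ref{herexpult} and Theorem \ref{isoss} (the latter applied to $G^{2\alpha}_{2\alpha}(\RR^d_+)$, which is legitimate since $2\alpha\geq 1$). Everything rests on the identity
\[
\langle \tilde{\sigma}_0, W(h_j,h_k)\rangle=\delta_{jk}\sigma_j,\qquad j,k\in\NN^d_0,
\]
which I shall refer to as $(\ast)$.

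To prove $(\ast)$ I would first use the factorisation $W(h_j,h_k)(x,\xi)=\prod_{l=1}^d W(h_{j_l},h_{k_l})(x_l,\xi_l)$ to reduce to one variable, and then invoke the Wigner--Laguerre formula for Hermite functions (\cite[Chapter 24]{Wong}): in one variable $W(h_m,h_n)$ equals, up to an explicit constant, $(x-i\xi)^{m-n}$ times a Laguerre function of $x^2+\xi^2$ when $m\geq n$ (and the conjugate shape for $m<n$), while on the diagonal $W(h_n,h_n)$ is a constant multiple of $L_n(2(x^2+\xi^2))e^{-(x^2+\xi^2)}$. Passing to polar coordinates $x_l=r_l\cos\theta_l,\ \xi_l=r_l\sin\theta_l$ in each plane and using that $\tilde{\sigma}_0(x,\xi)=\sigma_0(x_1^2+\xi_1^2,\dots,x_d^2+\xi_d^2)$ is radial in every pair $(x_l,\xi_l)$, the angular integrals $\int_0^{2\pi}e^{i(k_l-j_l)\theta_l}\,d\theta_l$ kill all off-diagonal terms and force $j=k$. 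On the diagonal the substitution $s_l=2r_l^2$ --- which is exactly why $\sigma_0(\rho)=\sigma(2\rho)$ is introduced, so that the argument $2(x_l^2+\xi_l^2)$ of $L_{k_l}$ matches the argument of $\sigma$ --- turns the radial integral into $\int_{\RR^d_+}\sigma(s)\mathcal{L}_k(s)\,ds$ (recall $\mathcal{L}_k(s)=L_k(s)e^{-s/2}$ as $\gamma=0$) and reproduces the normalising factor $(2\pi)^{d/2}(-1)^{|k|}2^{-d}$ appearing in $\sigma_k$. Pinning down this constant is the main obstacle; the rest is bookkeeping.

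Granting $(\ast)$, the remaining arguments are soft. For $f\in\SSS^{\alpha}_{\alpha}(\RR^d)$ I expand $f=\sum_j f_jh_j$ (Proposition \ref{herexpult}); since $(f,g)\mapsto W(f,\overline{g})$ is continuous into $\SSS^{\alpha}_{\alpha}(\RR^{2d})$ (Proposition \ref{concccwig}) and the Hermite expansions converge absolutely, $W(f,\overline{g})=\sum_{j,k}f_jg_kW(h_j,h_k)$ converges in $\SSS^{\alpha}_{\alpha}(\RR^{2d})$, so pairing with $\tilde{\sigma}_0\in\SSS(\RR^{2d})$ and applying $(\ast)$ gives $(W_{\tilde{\sigma}_0}f)(g)=(2\pi)^{-d/2}\sum_k f_kg_k\sigma_k$ for every test $g$; hence $\langle W_{\tilde{\sigma}_0}f,h_k\rangle=(2\pi)^{-d/2}\sigma_kf_k$. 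By Theorem \ref{isoss} one has $\{\sigma_k\}\in\sss^{2\alpha}$, and multiplication by a fixed element of $\sss^{2\alpha}$ is continuous both as a map $\sss^{2\alpha}\to\sss^{2\alpha}$ and as a map $(\sss^{2\alpha})'\to\sss^{2\alpha}$: if $|\sigma_k|\leq C(a_0)^{-|k|^{1/(2\alpha)}}$ then for $1<a'<a_0$,
\[
\sup_k|\sigma_kb_k|(a')^{|k|^{1/(2\alpha)}}\leq C\sum_k|b_k|(a_0/a')^{-|k|^{1/(2\alpha)}}=C\,\|\{b_k\}\|_{(\sss^{2\alpha})',\,a_0/a'}.
\]
Through the isomorphisms of Proposition \ref{herexpult} this yields at once the continuity of $W_{\tilde{\sigma}_0}$ on $\SSS^{\alpha}_{\alpha}(\RR^d)$ and its extension to a continuous map $(\SSS^{\alpha}_{\alpha}(\RR^d))'\to\SSS^{\alpha}_{\alpha}(\RR^d)$, the extension being precisely this multiplier on coefficient sequences and agreeing with $W_{\tilde{\sigma}_0}$ on test functions. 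For $f,g\in(\SSS^{\alpha}_{\alpha}(\RR^d))'$ the formula then follows by pairing the test function $W_{\tilde{\sigma}_0}f=(2\pi)^{-d/2}\sum_k\sigma_kf_kh_k$ with $g$, the series $\sum_k f_kg_k\sigma_k$ converging absolutely because $\{f_k\},\{g_k\}\in(\sss^{2\alpha})'$ grow subexponentially while $\{\sigma_k\}\in\sss^{2\alpha}$ decays subexponentially.

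For the last assertion, note that the dilation $\rho\mapsto 2\rho$ is an isomorphism of $G^{2\alpha}_{2\alpha}(\RR^d_+)$, so $\sigma_{0,j}\to\sigma_0$ is equivalent to convergence of the underlying symbols and hence, by Theorem \ref{isoss}, to $\{\sigma_k^{(j)}\}\to\{\sigma_k\}$ in $\sss^{2\alpha}$. As $\sss^{2\alpha}$ is a $(DFN)$-space it is regular, so these differences lie in a single Banach step $\sss^{2\alpha,a_1}$ with $\varepsilon_j:=\sup_k|\sigma_k^{(j)}-\sigma_k|(a_1)^{|k|^{1/(2\alpha)}}\to 0$. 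For a bounded set $B\subseteq(\sss^{2\alpha})'$ and $1<a'<a_1$ the displayed estimate (with $C$ replaced by $\varepsilon_j$ and $a_0$ by $a_1$) gives
\[
\sup_{\{b_k\}\in B}\bigl\|(W_{\tilde{\sigma}_{0,j}}-W_{\tilde{\sigma}_0})\{b_k\}\bigr\|_{\sss^{2\alpha,a'}}\leq (2\pi)^{-d/2}\varepsilon_j\sup_{\{b_k\}\in B}\|\{b_k\}\|_{(\sss^{2\alpha})',\,a_1/a'}\longrightarrow 0,
\]
which is exactly convergence of $W_{\tilde{\sigma}_{0,j}}$ to $W_{\tilde{\sigma}_0}$ in the strong topology of $\mathcal{L}((\SSS^{\alpha}_{\alpha}(\RR^d))',\SSS^{\alpha}_{\alpha}(\RR^d))$.
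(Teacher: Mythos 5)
Your proposal is correct and follows essentially the same route as the paper: the heart of both arguments is the diagonal identity $\langle\tilde{\sigma}_0,W(h_j,h_k)\rangle=\delta_{jk}\sigma_j$ obtained from Wong's Wigner--Laguerre formula, polar coordinates and the substitution $s=2r^2$, after which everything is transported to the sequence spaces $\sss^{2\alpha}$ and $(\sss^{2\alpha})'$ via Proposition \ref{herexpult} and Theorem \ref{isoss}. The only divergence is cosmetic: where the paper deduces continuity from bornologicity (bounded sets to bounded sets) and obtains the strong convergence of $W_{\tilde{\sigma}_{0,j}}$ via pointwise convergence plus Banach--Steinhaus and the Montel property, you give explicit multiplier estimates in a single Banach step $\sss^{2\alpha,a'}$, which yields the same conclusions directly and quantitatively.
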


\begin{proof}
First we compute the Weyl pseudo-differential transform
$W_{\tilde{\sigma}_0}$ of $f,g\in\SSS^{\alpha}_{\alpha}(\RR^d)$.
Since $\sum_{n\in\NN^d_0} f_nh_n$ and $\sum_{n\in\NN^d_0} g_nh_n$
converge absolutely to $f$ and $g$ in
$\SSS^{\alpha}_{\alpha}(\RR^d)$ respectively (cf. Proposition
\ref{herexpult}) and the mapping $(\varphi,\psi)\mapsto
W(\varphi,\overline{\psi})$, $\SSS^{\alpha}_{\alpha}(\RR^d)\times
\SSS^{\alpha}_{\alpha}(\RR^d)\rightarrow
\SSS^{\alpha}_{\alpha}(\RR^{2d})$, is continuous (see Proposition
\ref{concccwig}), we conclude
$W(f,\overline{g})=\sum_{(m,k)\in\NN^{2d}_0}f_m g_kW(h_m,h_k)$,
where the sum converges absolutely in
$\SSS^{\alpha}_{\alpha}(\RR^{2d})$. As
$\tilde{\sigma}_0\in\SSS(\RR^{2d})\subseteq
(\SSS^{\alpha}_{\alpha}(\RR^{2d}))'$, we have
\begin{eqnarray}
(W_{\tilde{\sigma}_0}
f)(g)=(2\pi)^{-d/2}\sum_{(m,k)\in\mathbb{N}^{2d}_0}f_mg_k
\langle\tilde{\sigma}_0,\psi_{m,k}\rangle, \label{Wong 24.6 1
dimm}
\end{eqnarray}
where $\psi_{m,k}=W(h_m,h_k)$. Clearly,
$\psi_{m,k}=\prod_{r=1}^d\psi_{m_r,k_r}$, where
$\psi_{m_r,k_r}=W(h_{m_r},h_{k_r})$. Using \cite[Theorem
24.1]{Wong} and denoting $\eta_r=x_r+i\xi_r\in \mathbb{C}$, we
have

$$\psi_{m_r,k_r}(x_r,\xi_r)=2(-1)^{k_r}(2\pi)^{-1/2}
\left(\frac{k_r!}{m_r!}\right)^{1/2}
(\sqrt{2})^{m_r-k_r}(\overline{\eta_r})^{m_r-k_r}L_{k_r}^{m_r-k_r}(2|\eta_r|^2)e^{-|\eta_r|^2},
m_r\geq k_r,$$
$$\psi_{m_r,k_r}(x_r,\xi_r)=2(-1)^{m_r}(2\pi)^{-1/2}
\left(\frac{m_r!}{k_r!}\right)^{1/2}
(\sqrt{2})^{k_r-m_r}\eta_r^{k_r-m_r}L_{m_r}^{k_r-m_r}(2|\eta_r|^2)e^{-|\eta_r|^2},\,
\mbox{if}\, k_r\geq m_r.$$

\noindent In terms of polar coordinates the integral
$\langle\tilde{\sigma}_0,\psi_{m,k}\rangle =
\int_{\mathbb{R}^{2d}}\sigma_0(v(x,\xi))\psi_{m,k}(x,\xi)dxd\xi$
is
$$\langle\tilde{\sigma}_0,\psi_{m,k}\rangle= C_{m,k}\prod_{r=1}^d\int_{-\pi}^{\pi}e^{-i(m_r-k_r)\theta_r} d\theta_r.$$
Thus $\langle\tilde{\sigma}_0,\psi_{m,k}\rangle=0$ when $m\neq k$.
Moreover,
\begin{eqnarray*}
\langle\tilde{\sigma}_0,\psi_{k,k}\rangle&=&
(2\pi)^{d/2}(-1)^{|k|}2^d\int_{\mathbb{R}_+^{d}}\sigma(2\rho^2_1,\ldots,2\rho^2_d)
L_{k}(2\rho^2_1,\ldots,2\rho^2_d)e^{-|\rho|^2}\rho^{\mathbf{1}}
d\rho\\
&=&(2\pi)^{d/2}(-1)^{|k|}2^{-d}
\int_{\mathbb{R}_+^{d}}\sigma(y)\mathcal{L}_{k}(y)dy=\sigma_k.
\end{eqnarray*}
By (\ref{Wong 24.6 1 dimm}), we obtain
\begin{equation}\label{trttt}
(W_{\tilde{\sigma}_0}
f)(g)=(2\pi)^{-d/2}\sum_{k\in\mathbb{N}_0^d}f_kg_k\sigma_k
\end{equation}
and the series converges absolutely since
$\{f_n\}_n,\{g_n\}_n,\{\sigma_n\}_n\in \sss^{2\alpha}$
($f,g\in\SSS^{\alpha}_{\alpha}(\RR^d)$, $\sigma\in
G^{2\alpha}_{2\alpha}(\RR^d_+)$). Let now
$f,g\in(\SSS^{\alpha}_{\alpha}(\RR^d))'$. Define
$(W_{\tilde{\sigma}_0}f)(g)=(2\pi)^{-d/2}\sum_{n\in
\NN^d_0}f_ng_n\sigma_n$. Observe that the series converges
absolutely since $\{f_n\}_{n\in\NN^d_0},\{g_n\}_{n\in\NN^d_0}\in
(\sss^{2\alpha})'$ and $\{\sigma_n\}_{n\in\NN^d_0}\in
\sss^{2\alpha}$ ($\sigma\in
G^{2\alpha}_{2\alpha}(\mathbb{R}^d_+)$; cf. Theorem \ref{isoss}).
Thus, if we fix $f\in(\SSS^{\alpha}_{\alpha}(\RR^d))'$, the
mapping $g\mapsto (W_{\tilde{\sigma}_0} f)(g)$,
$(\SSS^{\alpha}_{\alpha}(\RR^d))'\rightarrow \CC$, is a well
defined linear mapping. To prove that it is continuous let $B$ be
a bounded subset of $(\SSS^{\alpha}_{\alpha}(\RR^d))'$. Thus for
each $a>1$ there exists $C>0$ such that $|g_k|\leq C
a^{|k|^{1/(2\alpha)}}$, $\forall k\in\NN^d_0$, $\forall g\in B$.
Hence,
$$\sup_{g\in B}\left|(W_{\tilde{\sigma}_0}f)(g)\right|<\infty,$$
i.e. $W_{\tilde{\sigma}_0}f$ maps bounded subsets in
$(\SSS^{\alpha}_{\alpha}(\RR^d))'$ into bounded subsets of $\CC$.
Since $(\SSS^{\alpha}_{\alpha}(\RR^d))'$ is bornological,
$g\mapsto (W_{\tilde{\sigma}_0}f)(g)$ is continuous, hence
$W_{\tilde{\sigma}_0}f\in\SSS^{\alpha}_{\alpha}(\RR^d)$
($\SSS^{\alpha}_{\alpha}(\RR^d)$ is reflexive). Now we conclude
that $W_{\tilde{\sigma}_0}f=\sum_{n\in\NN^d_0}f_n\sigma_n h_n$
(this is exactly Hermite expansion of $W_{\tilde{\sigma}_0}f$;
$\{f_n\sigma_n\}_n\in \sss^{2\alpha}$). Thus, the mapping
$f\mapsto W_{\tilde{\sigma}_0}f$,
$(\SSS^{\alpha}_{\alpha}(\RR^d))'\rightarrow
\SSS^{\alpha}_{\alpha}(\RR^d)$, is well defined and linear.
Arguing similarly as before, one can prove that when $f$ varies in
a bounded subset $B$ of $(\SSS^{\alpha}_{\alpha}(\RR^d))'$, the
set $\{\{f_k\sigma_k\}_{k\in\NN^d_0}|\, f\in B\}$ is bounded in
$\sss^{2\alpha}$. Thus $\{W_{\tilde{\sigma}_0}f|\, f\in B\}$ is
bounded in $\SSS^{\alpha}_{\alpha}(\RR^d)$. As
$(\SSS^{\alpha}_{\alpha}(\RR^d))'$ is bornological, the mapping
$f\mapsto W_{\tilde{\sigma}_0} f$,
$(\SSS^{\alpha}_{\alpha}(\RR^d))'\rightarrow
\SSS^{\alpha}_{\alpha}(\RR^d)$, is continuous. Observe that
$W_{\tilde{\sigma}_0} f$ coincides with the Weyl transform of $f$
when $f\in\SSS^{\alpha}_{\alpha}(\RR^d)$ (cf. (\ref{trttt})).\\
\indent If $\sigma_j\rightarrow \sigma$ as $j\rightarrow \infty$,
in $G^{2\alpha}_{2\alpha}(\RR^d_+)$, Theorem \ref{isoss} implies
that $\{\sigma_{n,j}\}_{n\in \NN^d_0}\rightarrow
\{\sigma_n\}_{n\in\NN^d}$ as $j\rightarrow \infty$ in
$\sss^{2\alpha}$ and since the latter is a $(DFN)$-space, the
convergence also holds in $\sss^{2\alpha,a}$ for some $a>1$. Thus,
for each fixed $f\in(\SSS^{\alpha}_{\alpha}(\RR^d))'$,
$\{f_n\sigma_{n,j}\}_n\rightarrow \{f_n\sigma_n\}_n$ in
$\sss^{2\alpha}$. Hence,
$\sum_{n\in\NN^d_0}f_n\sigma_{n,j}h_n\rightarrow
\sum_{n\in\NN^d_0}f_n\sigma_nh_n$ in
$\SSS^{\alpha}_{\alpha}(\RR^d)$. We obtain
$W_{\tilde{\sigma}_{0,j}}\rightarrow W_{\tilde{\sigma}_0}$ in the
topology of simple convergence in
$\mathcal{L}((\SSS^{\alpha}_{\alpha}(\RR^d))',\SSS^{\alpha}_{\alpha}(\RR^d))$.
Now, the Banach-Steinhaus theorem implies that the convergence
holds in the topology of precompact convergence. Since
$(\SSS^{\alpha}_{\alpha}(\RR^d))'$ is a Montel space, the
convergence also holds in the strong topology of
$\mathcal{L}((\SSS^{\alpha}_{\alpha}(\RR^d))',\SSS^{\alpha}_{\alpha}(\RR^d))$.
\qed
\end{proof}

By similar arguments, one can prove the following theorem.

\begin{thm}\label{ter17}
Let $\sigma(\rho)\in\mathcal{S}(\mathbb{R}^d_+)$ and denote by
$\sigma_0(\rho)=\sigma(2\rho)$, $\rho\in\RR^d_+$. Let
$\tilde{\sigma},\tilde{\sigma}_0\in\SSS(\RR^{2d})$ be the
functions defined in (\ref{symff}). Then the Weyl
pseudo-differential operator $W_{\tilde{\sigma}_0}$ extends to a
continuous mapping $W_{\tilde{\sigma}_0}:(\SSS(\RR^d))'\rightarrow
\SSS(\RR^d)$. If $f,g\in(\mathcal{S}(\mathbb{R}^d))'$ and
$$f_k=\langle f,h_k\rangle, g_k=\langle g,h_k\rangle\;\mbox{and}\;
\sigma_k=(2\pi)^{d/2}(-1)^{|k|}2^{-d}
\int_{\mathbb{R}^d_+}\sigma(\rho)\mathcal{L}_{k}(\rho)d\rho,$$
then
$(W_{\tilde{\sigma}_0}f)(g)=(2\pi)^{-d/2}\sum_{k\in\mathbb{N}^d_0}f_kg_k\sigma_k$.
Moreover, if
$\sigma_{0,j}(\eta)\rightarrow\sigma_{0}(\eta)\;\mbox{in}\;
\mathcal{S}(\mathbb{R}^d_+)$ as $j\rightarrow\infty$ then
$W_{\tilde{\sigma}_{0,j}}\rightarrow W_{\tilde{\sigma}_0}$ in the
strong topology of $\mathcal{L}((\SSS(\RR^d))',\SSS(\RR^d))$.
\end{thm}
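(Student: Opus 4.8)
The plan is to follow the proof of Theorem \ref{ter11} \emph{mutatis mutandis}, the only structural change being that the sequence space $\sss^{2\alpha}$ is everywhere replaced by $\sss$ and the Gelfand--Shilov space $\SSS^{\alpha}_{\alpha}(\RR^d)$ by the Schwartz space $\SSS(\RR^d)$. The inputs I would use are: the classical Hermite characterisations $\SSS(\RR^d)\cong\sss$ and $(\SSS(\RR^d))'\cong\sss'$ (the $\alpha\to\infty$ analogue of Proposition \ref{herexpult}); the Laguerre isomorphism $\SSS(\RR^d_+)\cong\sss$ of \cite[Theorem 3.1]{Sm}, which guarantees $\{\sigma_n\}_n\in\sss$; and the continuity of the Wigner transform $\SSS(\RR^d)\times\SSS(\RR^d)\to\SSS(\RR^{2d})$ recalled above (cf. \cite[Corollary 3.4]{Wong}).

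First I would compute the matrix coefficients for $f,g\in\SSS(\RR^d)$. Writing the Hermite expansions $f=\sum_n f_nh_n$ and $g=\sum_n g_nh_n$, which converge in $\SSS(\RR^d)$, the continuity of the Wigner transform gives $W(f,\overline{g})=\sum_{(m,k)}f_mg_k\psi_{m,k}$ with absolute convergence in $\SSS(\RR^{2d})$, where $\psi_{m,k}=W(h_m,h_k)$. Since $\tilde{\sigma}_0\in\SSS(\RR^{2d})\subseteq(\SSS(\RR^{2d}))'$, pairing yields $(W_{\tilde{\sigma}_0}f)(g)=(2\pi)^{-d/2}\sum_{(m,k)}f_mg_k\langle\tilde{\sigma}_0,\psi_{m,k}\rangle$. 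At this point the polar-coordinate computation is literally the one carried out in the proof of Theorem \ref{ter11}: using the explicit Laguerre form of $\psi_{m,k}$ from \cite[Theorem 24.1]{Wong}, the angular integrals force $\langle\tilde{\sigma}_0,\psi_{m,k}\rangle=0$ unless $m=k$, while $\langle\tilde{\sigma}_0,\psi_{k,k}\rangle=\sigma_k$. Hence $(W_{\tilde{\sigma}_0}f)(g)=(2\pi)^{-d/2}\sum_k f_kg_k\sigma_k$, the series converging absolutely because $\{f_n\}_n,\{g_n\}_n,\{\sigma_n\}_n\in\sss$. This fixes the action of $W_{\tilde{\sigma}_0}$ on $\SSS(\RR^d)$ via the stated formula.

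Next I would extend to the duals by defining $(W_{\tilde{\sigma}_0}f)(g)=(2\pi)^{-d/2}\sum_n f_ng_n\sigma_n$ for $f,g\in(\SSS(\RR^d))'$. Here is the one point where the argument genuinely differs from the test-function case: the series still converges absolutely, but now because $\{f_n\}_n,\{g_n\}_n\in\sss'$ grow at most polynomially while $\{\sigma_n\}_n\in\sss$ decays faster than any power, so the rapid decay of the symbol coefficients absorbs the polynomial growth of $\{f_ng_n\}_n$. Fixing $f$, the functional $g\mapsto(W_{\tilde{\sigma}_0}f)(g)$ sends bounded subsets of $(\SSS(\RR^d))'$ into bounded subsets of $\CC$ (a uniform bound $|g_n|\leq C(|n|+1)^N$ over the bounded set, combined with $\{f_n\sigma_n\}_n\in\sss$); since $(\SSS(\RR^d))'$ is bornological this functional is continuous, so $W_{\tilde{\sigma}_0}f\in((\SSS(\RR^d))')'=\SSS(\RR^d)$ by reflexivity, with Hermite coefficients $\{f_n\sigma_n\}_n\in\sss$, i.e. $W_{\tilde{\sigma}_0}f=\sum_n f_n\sigma_n h_n$. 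The same bookkeeping shows that $\{W_{\tilde{\sigma}_0}f\mid f\in B\}$ is bounded in $\SSS(\RR^d)$ whenever $B\subseteq(\SSS(\RR^d))'$ is bounded, and bornologicity of $(\SSS(\RR^d))'$ then yields continuity of $W_{\tilde{\sigma}_0}\colon(\SSS(\RR^d))'\to\SSS(\RR^d)$.

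Finally, for the convergence statement, $\sigma_{0,j}\to\sigma_0$ in $\SSS(\RR^d_+)$ is equivalent (dilation is a topological isomorphism of $\SSS(\RR^d_+)$) to $\sigma_j\to\sigma$, so by \cite[Theorem 3.1]{Sm} the Laguerre coefficients satisfy $\{\sigma_{n,j}\}_n\to\{\sigma_n\}_n$ in $\sss$. For fixed $f\in(\SSS(\RR^d))'$, splitting off a fixed polynomial bound on $\{f_n\}_n$ gives $\sup_n|f_n|\,|\sigma_{n,j}-\sigma_n|(|n|+1)^{\ell}\to 0$ for every $\ell$, hence $\{f_n\sigma_{n,j}\}_n\to\{f_n\sigma_n\}_n$ in $\sss$ and therefore $W_{\tilde{\sigma}_{0,j}}f\to W_{\tilde{\sigma}_0}f$ in $\SSS(\RR^d)$. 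This is convergence in the topology of simple convergence in $\mathcal{L}((\SSS(\RR^d))',\SSS(\RR^d))$; the Banach--Steinhaus theorem upgrades it to precompact convergence, and since $(\SSS(\RR^d))'$ is Montel (bounded sets are relatively compact) precompact and strong convergence coincide, which is the claim. Everything is parallel to Theorem \ref{ter11}; the only genuinely new verification, and the natural place for care, is the $\sss$--$\sss'$ duality that makes the defining series converge on the duals.
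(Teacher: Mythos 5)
Your proposal is correct and is essentially the argument the paper intends: the paper gives no separate proof of this theorem, stating only that it follows ``by similar arguments'' from Theorem \ref{ter11}, and your adaptation (replacing $\sss^{2\alpha}$ by $\sss$, $\SSS^{\alpha}_{\alpha}(\RR^d)$ by $\SSS(\RR^d)$, and using the $\sss$--$\sss'$ duality so that the rapid decay of $\{\sigma_n\}_n$ absorbs the polynomial growth of $\{f_ng_n\}_n$) is exactly the intended route, with the diagonalisation $\langle\tilde{\sigma}_0,\psi_{m,k}\rangle=\delta_{m,k}\sigma_k$ and the Banach--Steinhaus/Montel upgrade carried over verbatim.
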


Let $\alpha\geq 1/2$. If $\sigma$ is a measurable function on
$\RR^d_+$ such that $\sigma(\rho)/(\mathbf{1}+\rho)^{n/2}\in
L^2(\RR^d_+)$ for some $n\in\NN^d_0$ then one easily verifies that
$\sigma\in(\SSS(\RR^d_+))'$. Since the canonical inclusion
$G^{2\alpha}_{2\alpha}(\RR^d_+)\rightarrow \SSS(\RR^d_+)$ is
continuous and dense, $(\SSS(\RR^d_+))'$ is continuously injected
into $(G^{2\alpha}_{2\alpha}(\RR^d_+))'$, hence $\sigma\in
(G^{2\alpha}_{2\alpha}(\RR^d_+))'$.

\begin{lem}\label{jednakost duali}
Let $\alpha\geq 1/2$ and $\sigma_n$, $n\in\NN^d_0$, be measurable
functions on $\RR^d_+$ such that
$\sigma_n(\rho)/(\mathbf{1}+\rho)^{n/2}\in L^2(\RR^d_+)$, for all
$n\in\NN^d_0$ and for each $A>0$, \beas
\sum_{n\in\NN^d_0}\left\|\sigma_n(\rho)/(\mathbf{1}+\rho)^{n/2}\right\|_{L^2(\RR^d_+)}A^{|n|}n^{\alpha
n}<\infty. \eeas
Then $\sum_{n\in\NN^d_0} \sigma_n$ converges absolutely in $(G^{2\alpha}_{2\alpha}(\RR^d_+))'$.\\
\indent For each $n\in\NN^d_0$,
$\tilde{\sigma}_n(x,\xi)=\sigma_n(2v(x,\xi))$ is measurable on
$\RR^{2d}$ and $\tilde{\sigma}_n(x,\xi)/
(\mathbf{1}+2v(x,\xi))^{n/2}\in L^2(\RR^{2d})$. Moreover,
$\sum_{n\in\NN^d_0} \tilde{\sigma}_n(x,\xi)$ converges absolutely
in $(\SSS^{\alpha}_{\alpha}(\RR^{2d}))'$.
\end{lem}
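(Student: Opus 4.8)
The plan is to prove both convergence statements by the same two-ingredient scheme: a Cauchy--Schwarz splitting of the relevant dual pairing against the weight $(\mathbf1+\cdot)^{n/2}$, followed by a polynomial-weight estimate on the test function that produces the growth factor $A^{|n|}n^{\alpha n}$, which is then matched against the summability hypothesis. For the convergence in $(G^{2\alpha}_{2\alpha}(\RR^d_+))'$ I first record that each $\sigma_n$ lies in $(G^{2\alpha}_{2\alpha}(\RR^d_+))'$ by the discussion preceding the lemma (the hypothesis $\sigma_n/(\mathbf1+\rho)^{n/2}\in L^2(\RR^d_+)$ forces $\sigma_n\in(\SSS(\RR^d_+))'$, which is continuously injected into $(G^{2\alpha}_{2\alpha}(\RR^d_+))'$). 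Since $2\alpha\geq1$, Theorem \ref{isoss} tells us that $(G^{2\alpha}_{2\alpha}(\RR^d_+))'$ is an $(FN)$-space whose topology is that of uniform convergence on the bounded subsets of the $(DFN)$-space $G^{2\alpha}_{2\alpha}(\RR^d_+)$; hence absolute convergence of $\sum_n\sigma_n$ is equivalent to $\sum_n\sup_{f\in B}|\langle\sigma_n,f\rangle|<\infty$ for every bounded $B$. Because the underlying space is a $(DFS)$-space, any such $B$ is contained in and bounded in a single step $G^{2\alpha,A}_{2\alpha,A}(\RR^d_+)$, so $M:=\sup_{f\in B}\sigma_{A,0}(f)<\infty$ (we may take $A\geq1$).

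The estimate I would use is obtained by writing $\langle\sigma_n,f\rangle=\int_{\RR^d_+}\frac{\sigma_n(\rho)}{(\mathbf1+\rho)^{n/2}}(\mathbf1+\rho)^{n/2}f(\rho)\,d\rho$ and applying Cauchy--Schwarz, so that $|\langle\sigma_n,f\rangle|\leq\|\sigma_n/(\mathbf1+\rho)^{n/2}\|_2\,\|(\mathbf1+\rho)^{n/2}f\|_2$. The weight factor is controlled by squaring and expanding the genuine polynomial $(\mathbf1+\rho)^n=\sum_{j\leq n}\binom nj\rho^j$, which gives $\|(\mathbf1+\rho)^{n/2}f\|_2^2=\sum_{j\leq n}\binom nj\|t^{j/2}f\|_2^2$; taking $p=0$ in the defining seminorm yields $\|t^{j/2}f\|_2\leq\sigma_{A,0}(f)A^{|j|}j^{\alpha j}\leq MA^{|n|}n^{\alpha n}$ for $j\leq n$, and summing the $2^{|n|}$ binomial terms produces $\|(\mathbf1+\rho)^{n/2}f\|_2\leq M(\sqrt2\,A)^{|n|}n^{\alpha n}$ uniformly for $f\in B$. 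Feeding this into Cauchy--Schwarz and invoking the hypothesis with the constant $\tilde A=\sqrt2\,A$ gives $\sum_n\sup_{f\in B}|\langle\sigma_n,f\rangle|\leq M\sum_n\|\sigma_n/(\mathbf1+\rho)^{n/2}\|_2(\sqrt2\,A)^{|n|}n^{\alpha n}<\infty$, which is the first assertion.

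For the second assertion, measurability of $\tilde\sigma_n=\sigma_n\circ(2v)$ and the membership $\tilde\sigma_n/(\mathbf1+2v)^{n/2}\in L^2(\RR^{2d})$ are both handled by passing to polar coordinates $x_l=r_l\cos\theta_l$, $\xi_l=r_l\sin\theta_l$ in each plane and substituting $s_l=2r_l^2$; the latter is a null-set-preserving diffeomorphism of $(0,\infty)$, which gives measurability, and a direct computation ($dx_l\,d\xi_l=\frac14\,ds_l\,d\theta_l$, integrating out $\theta_l$) yields
\[
\left\|\frac{\tilde\sigma_n}{(\mathbf1+2v)^{n/2}}\right\|_{L^2(\RR^{2d})}=\Big(\tfrac\pi2\Big)^{d/2}\left\|\frac{\sigma_n}{(\mathbf1+s)^{n/2}}\right\|_{L^2(\RR^d_+)},
\]
so these norms are finite and satisfy the same summability bound as the data; consequently each $\tilde\sigma_n$ defines an element of $(\SSS(\RR^{2d}))'\hookrightarrow(\SSS^{\alpha}_{\alpha}(\RR^{2d}))'$. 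To obtain absolute convergence in the $(FN)$-space $(\SSS^{\alpha}_{\alpha}(\RR^{2d}))'$ I repeat the reduction of the first part: for a bounded $B\subseteq\SSS^{\alpha}_{\alpha}(\RR^{2d})$, contained in a single step $\SSS^{\alpha,A}_{\alpha,A}(\RR^{2d})$, Cauchy--Schwarz gives $|\langle\tilde\sigma_n,\varphi\rangle|\leq\|\tilde\sigma_n/(\mathbf1+2v)^{n/2}\|_2\,\|(\mathbf1+2v)^{n/2}\varphi\|_2$, and squaring and expanding the polynomial $(\mathbf1+2v)^n$ bounds the weight factor by $\|(\mathbf1+2v)^{n/2}\varphi\|_2\leq C(A'')^{|n|}(n!)^{\alpha}\leq C(A'')^{|n|}n^{\alpha n}$ uniformly on $B$, where one uses the monomial estimate $\|z^P\varphi\|_2\leq CA^{|P|}(P!)^{\alpha}$ with $z=(x,\xi)$, $P!\leq n!$, and the multinomial coefficients summing to $5^{|n|}$. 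Combining the two factors with the polar identity and the hypothesis (taken at $\tilde A=A''$) yields $\sum_n\sup_{\varphi\in B}|\langle\tilde\sigma_n,\varphi\rangle|<\infty$, completing the proof. The main technical point is the polar-coordinate transfer together with the control of the non-integer quadratic weights $(\mathbf1+2v)^{n/2}$; squaring first to reduce to integer powers is precisely what makes both weight estimates routine rather than awkward.
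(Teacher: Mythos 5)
Your proof is correct in substance, and for the second half (measurability of $\tilde{\sigma}_n$, the polar-coordinate norm identity, and the absolute convergence in $(\SSS^{\alpha}_{\alpha}(\RR^{2d}))'$) it follows essentially the same path as the paper: the paper also reduces measurability to showing that preimages of null sets under $v_1$ are null (it does this with an explicit covering by cubes rather than your polar-coordinate diffeomorphism, but the content is the same), obtains the same constant $(\pi/2)^{d}$ for the squared norms, and runs the same Cauchy--Schwarz plus multinomial expansion of $(\mathbf{1}+2v)^{n}$ against the step seminorms of $\SSS^{\alpha,A}_{\alpha,A}(\RR^{2d})$, whose linking maps the paper has declared compact, so localizing $B$ in one step is legitimate there. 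For the first half your route is genuinely different: you pair $\sigma_n$ directly against $f$ by Cauchy--Schwarz and control $\|(\mathbf{1}+\rho)^{n/2}f\|_2$ via the binomial expansion and the $p=0$ case of the defining seminorm of $G^{2\alpha,A}_{2\alpha,A}(\RR^d_+)$, whereas the paper first expands $f=\sum_k a_{k,f}\mathcal{L}_k$, uses the uniform coefficient bound $|a_{k,f}|\le C_0a^{-|k|^{1/(2\alpha)}}$ supplied by Theorem \ref{isoss}, and then estimates $\|(\mathbf{1}+\rho)^{n/2}\mathcal{L}_k\|_2$ through Duran's bound on Laguerre polynomials, exactly as in the proof of Proposition \ref{D Lemma3.1}. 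Your version is shorter and avoids the Laguerre machinery entirely.

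The one point you should shore up is the assertion that a bounded subset $B$ of $G^{2\alpha}_{2\alpha}(\RR^d_+)$ is contained and bounded in a single step $G^{2\alpha,A}_{2\alpha,A}(\RR^d_+)$ ``because the space is a $(DFS)$-space.'' Being (isomorphic to) a $(DFS)$-space localizes bounded sets in the steps of a defining spectrum with compact linking maps; the paper never shows that the inclusions $G^{2\alpha,A_1}_{2\alpha,A_1}(\RR^d_+)\rightarrow G^{2\alpha,A_2}_{2\alpha,A_2}(\RR^d_+)$ are compact, so the localization is not automatic for this particular presentation. The claim is nevertheless true and easy to repair: either invoke Grothendieck's factorization theorem (the Banach disk generated by $B$ maps continuously into the countable inductive limit and hence factors through some step), or route through the coefficients as the paper does --- boundedness of $\iota(B)$ in some $\sss^{2\alpha,a}$ together with the uniform constants produced in the proof of Proposition \ref{D Lemma3.1} places $B$ boundedly in one $G^{2\alpha,A}_{2\alpha,A}(\RR^d_+)$. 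This is precisely why the paper's detour through the Laguerre expansion costs nothing: bounded sets of $\sss^{2\alpha}$ are trivially localized in some $\sss^{2\alpha,a}$. With that single step justified, your argument is complete.
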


\begin{proof} To prove that $\sum_{n\in\NN^d_0} \sigma_n$
converges absolutely in $(G^{2\alpha}_{2\alpha}(\RR^d_+))'$ let
$B$ be bounded subset of $G^{2\alpha}_{2\alpha}(\RR^d_+)$. For
each $f\in B$ denote by $a_{n,f}=\langle f,\mathcal{L}_n\rangle$.
By Theorem \ref{isoss}, $\{\{a_{n,f}\}_{n\in\NN^d_0}|\, f\in B\}$
is bounded in $\sss^{2\alpha}$ and hence also bounded in
$\sss^{2\alpha,a}$ for some $a>1$, i.e. there exists $C_0>0$ such
that $|a_{n,f}|\leq C_0 a^{-|n|^{1/(2\alpha)}}$ for all $f\in B$.
For $f\in B$, $n\in\NN^d_0$, we have \beas
\left|\langle\sigma_n,f\rangle\right|&\leq&
\sum_{k\in\NN^d_0}|a_{k,f}|
\int_{\RR^d_+}|\sigma_n(\rho)||\mathcal{L}_k(\rho)|d\rho\\
&\leq&
C_0\left\|\sigma_n(\rho)/(\mathbf{1}+\rho)^{n/2}\right\|_{L^2(\RR^d_+)}
\sum_{k\in\NN^d_0}a^{-|k|^{1/(2\alpha)}}\sum_{m\leq
n}\binom{n}{m}\|\rho^{m/2}\mathcal{L}_k\|_{L^2(\RR^d_+)}. \eeas As
in the first part of the proof of Proposition \ref{D Lemma3.1}, by
(\ref{estforlaged}), there exist $C_1,A>1$ which depend on $a$ but
not on $n\in\NN^d_0$ such that \beas
\sum_{k\in\NN^d_0}a^{-|k|^{1/(2\alpha)}}\sum_{m\leq
n}\binom{n}{m}\|\rho^{m/2}\mathcal{L}_k\|_{L^2(\RR^d_+)} \leq
C_1A^{|n|}n^{\alpha n}. \eeas Hence, by the assumption on
$\sigma_n$, $n\in\NN^d_0$, we have $\sum_{n\in\NN^d_0}\sup_{f\in
B} |\langle \sigma_n,f\rangle|<\infty$, i.e.
$\sum_{n\in\NN^d_0}\sigma(\rho)$
converges absolutely in $(G^{2\alpha}_{2\alpha}(\RR^d_+))'$.\\
\indent Next we prove that for each $n\in\NN^d_0$, $\tilde{\sigma}_n$ is measurable on $\RR^{2d}$. Firstly, we show the following:\\
\indent Let $v_1:\RR^{2d}\rightarrow \overline{\RR^d_+}$ be
defined by $v_1(x,\xi)=(2x_1^2+2\xi_1^2,\ldots,2x_d^2+2\xi_d^2)$.
If $g:\overline{\RR^d_+}\rightarrow \CC$ is measurable then $f:\RR^{2d}\rightarrow \CC$, $f=g\circ v_1$, is also measurable.\\
\indent For brevity in notation we denote by $\lambda_d$ and
$\lambda_{2d}$ the Lebesgue measure on $\RR^d$ and $\RR^{2d}$,
respectively. We prove that if $N\subseteq\overline{\RR^d_+}$ with
$\lambda_d(N)=0$ then $\lambda_{2d}(v_1^{-1}(N))=0$. Observe that
this implies the measurability of $f$ since every measurable set
is the union of a Borel set and a set of measure zero and the
preimage of every Borel set under $v_1$ is Borel set (since $v_1$
is continuous). Let $N\subseteq \overline{\RR^d_+}$ with
$\lambda_d(N)=0$. Denote by $N_1=N\cap\RR^d_+$ and
$N_2=N\backslash N_1$. Obviously
$\lambda_{2d}\left(v_1^{-1}(\overline{\RR^d_+}\backslash\RR^d_+)\right)=0$,
thus $v_1^{-1}(N_2)$ is measurable and has measure zero. It
remains to prove that $\lambda_{2d}(v_1^{-1}(N_1))=0$. Let
$\varepsilon>0$ be arbitrary but fixed. Since $\lambda_d(N_1)=0$,
there exists an open set $O\subseteq \RR^d_+$, such that
$N_1\subseteq O$ and $\lambda_d(O)<\varepsilon/\pi^d$. There exist
countable number of cubes $B(\rho^{(j)},r_j)=\{\rho\in\RR^d_+|\,
\rho^{(j)}_l\leq \rho_l<\rho^{(j)}_l+r_j,\, l=1,\ldots, d\}$,
$j\in\NN$, which are pairwise disjoint and $O=\bigcup_{j\in\NN}
B(\rho^{(j)},r_j)$ (cf. \cite[p. 50]{rudinn}). Observe that
$$\varepsilon/\pi^d>\lambda_d(O)=\sum_{j\in\NN}\lambda_d(B(\rho^{(j)},r_j))= \sum_{j\in\NN}r_j^d$$
and
$$v_1^{-1}(B(\rho^{(j)},r_j))=\prod_{l=1}^d \left\{(x_l,\xi_l)|\,
\rho^{(j)}_l/2\leq x_l^2+\xi_l^2<\rho^{(j)}_l/2+r_j/2\right\}.$$
\noindent Thus
$\lambda_{2d}(v_1^{-1}(B(\rho^{(j)},r_j))=r_j^d\pi^d/2^d$. Hence
$\lambda_{2d}(v_1^{-1}(O))=\sum_{j\in\NN}r_j^d\pi^d/2^d<\varepsilon$.
Since $\varepsilon>0$ is arbitrary, we conclude that
$v_1^{-1}(N_1)$ is measurable and it has measure zero.\\
\indent This fact readily implies the measurability of
$\tilde{\sigma}_n$. Moreover, \beas
\left\|\tilde{\sigma}_n(x,\xi)/(\mathbf{1}+2v(x,\xi))^{n/2}\right\|^2_{L^2(\RR^{2d})}=2^{-d}\pi^d
\left\|\sigma_n(\rho)/(\mathbf{1}+\rho)^{n/2}\right\|^2_{L^2(\RR^d_+)}.
\eeas Clearly, $\tilde{\sigma}_n\in
(\SSS^{\alpha}_{\alpha}(\RR^{2d}))'$ for each $n\in\NN^d_0$. To
prove that $\sum_{n\in\NN^d_0} \tilde{\sigma}_n$ converges
absolutely in $(\SSS^{\alpha}_{\alpha}(\RR^{2d}))'$, let $B$ be a
bounded subset of $\SSS^{\alpha}_{\alpha}(\RR^{2d})$. As the
latter space is the inductive limit of
$\ds\lim_{\substack{\longrightarrow\\ A\rightarrow \infty}}
\SSS^{\alpha,A}_{\alpha,A}(\RR^{2d})$ with compact linking
mappings, there exist $C,A\geq 1$ such that

$$\left\|x^n \xi^m D^p_x D^q_{\xi} f(x,\xi)\right\|_{L^2(\RR^{2d})}
\leq CA^{|n+m+p+q|}n!^{\alpha}m!^{\alpha}p!^{\alpha}q!^{\alpha},$$
$\forall n,m,p,q\in\NN^d_0$, $\forall f\in B$. For $f\in B$, we
have \beas \left|\langle\tilde{\sigma}_n,f\rangle\right|&\leq&
\left\|\tilde{\sigma}_n(x,\xi)/
(\mathbf{1}+2v(x,\xi))^{n/2}\right\|_{L^2(\RR^{2d})}
\left\|f(x,\xi)(\mathbf{1}+2v(x,\xi))^{n/2}\right\|_{L^2(\RR^{2d})}\\
&\leq& \pi^d
2^{|n|}\left\|\sigma_n(\rho)/(\mathbf{1}+\rho)^{n/2}\right\|^2_{L^2(\RR^d_+)}
\sum_{m+k+p=n}\frac{n!}{m!k!p!}\left\|x^m\xi^kf(x,\xi)\right\|_{L^2(\RR^{2d})}\\
&\leq& C\pi^d
(6A)^{|n|}n!^{\alpha}\left\|\sigma_n(\rho)/(\mathbf{1}+\rho)^{n/2}\right\|^2_{L^2(\RR^d_+)}.
\eeas Hence, by the assumption in the lemma, $\sum_{n\in\NN^d_0}
\sup_{f\in B}|\langle\tilde{\sigma}_n,f\rangle|< \infty$, i.e.
$\sum_{n\in\NN^d_0} \tilde{\sigma}_n$ absolutely converges in
$(\SSS^{\alpha}_{\alpha}(\RR^{2d}))'$. \qed
\end{proof}

Let $\sigma_n$ and $\tilde{\sigma}_n$, $n\in\NN^d_0$, be as in the
previous lemma and $\tilde{\sigma}(x,\xi)=\sum_{n\in\NN^d_0}
\tilde{\sigma}_n(x,\xi)\in (\SSS^{\alpha}_{\alpha}(\RR^d))'$. The
Weyl pseudo-differential operator $W_{\tilde{\sigma}}$ is a
continuous mapping from $\SSS^{\alpha}_{\alpha}(\RR^d)$ into
$(\SSS^{\alpha}_{\alpha}(\RR^d))'$. In this case, we obtain
improvement with the following result.

\begin{thm}
Let $\alpha\geq 1/2$. Let $\sigma_n(\rho)$ and
$\tilde{\sigma}_n(x,\xi)=\sigma_n(2v(x,\xi))$, $n\in\NN^d_0$, be
as in Lemma \ref{jednakost duali}. Then the Weyl
pseudo-differential operator $W_{\tilde{\sigma}}$ with a symbol
$\tilde{\sigma}(x,\xi)=\sum_{n\in\NN^d_0}
\tilde{\sigma}_n(x,\xi)\in (\SSS^{\alpha}_{\alpha}(\RR^{2d}))'$ is
a continuous mapping from
$\mathcal{S}^{\alpha}_{\alpha}(\mathbb{R}^d)$ into
$\mathcal{S}^{\alpha}_{\alpha}(\mathbb{R}^d)$ and it extends to a
continuous mapping from
$(\mathcal{S}^{\alpha}_{\alpha}(\mathbb{R}^d))'$ to
$(\mathcal{S}^{\alpha}_{\alpha}(\mathbb{R}^d))'$.\\
\indent Assume that for each $j\in \NN$,
$\sigma^{(j)}_n\in(G^{2\alpha}_{2\alpha}(\RR^d_+))'$,
$n\in\NN^d_0$, be as above and denote by
$\sigma^{(j)}=\sum_{n\in\NN^d_0}\sigma^{(j)}_n\in
(G^{2\alpha}_{2\alpha}(\RR^d_+))'$. If $\sigma^{(j)}\rightarrow
\sigma$ in $(G^{2\alpha}_{2\alpha}(\RR^d_+))'$ with $\sigma$ as
above, then $W_{\tilde{\sigma}^{(j)}}\rightarrow
W_{\tilde{\sigma}}$ in the strong topology of
$\mathcal{L}(\SSS^{\alpha}_{\alpha}(\RR^d),\SSS^{\alpha}_{\alpha}(\RR^d))$
and
$\mathcal{L}((\SSS^{\alpha}_{\alpha}(\RR^d))',(\SSS^{\alpha}_{\alpha}(\RR^d))')$.
\end{thm}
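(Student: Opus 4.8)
The plan is to reduce everything to the action of $W_{\tilde\sigma}$ on Hermite coefficients, exactly as in the proof of Theorem \ref{ter11}, the only new point being that the symbol is now a series of distributions rather than a single test function. Put $\sigma_k=(2\pi)^{d/2}(-1)^{|k|}2^{-d}\langle\sigma,\mathcal{L}_k\rangle$ for $k\in\NN^d_0$; this is well defined since $\mathcal{L}_k\in G^{2\alpha}_{2\alpha}(\RR^d_+)$ and $\sigma=\sum_n\sigma_n\in(G^{2\alpha}_{2\alpha}(\RR^d_+))'$ by Lemma \ref{jednakost duali}, and by Theorem \ref{D ocena koef za dual} the sequence $\{\sigma_k\}_k$ lies in $(\sss^{2\alpha})'$. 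First I would establish the diagonalisation
\[
(W_{\tilde\sigma}f)(g)=(2\pi)^{-d/2}\sum_{k\in\NN^d_0}f_kg_k\sigma_k,\qquad f,g\in\SSS^{\alpha}_{\alpha}(\RR^d),
\]
with $f_k=\langle f,h_k\rangle$, $g_k=\langle g,h_k\rangle$. Expanding $W(f,\overline g)=\sum_{m,k}f_mg_k\psi_{m,k}$ (absolutely convergent in $\SSS^{\alpha}_{\alpha}(\RR^{2d})$ by Proposition \ref{concccwig}, with $\psi_{m,k}=W(h_m,h_k)$) and using that $\tilde\sigma=\sum_n\tilde\sigma_n$ converges absolutely in $(\SSS^{\alpha}_{\alpha}(\RR^{2d}))'$ (Lemma \ref{jednakost duali}), I may interchange both the pairing with the sum over $(m,k)$ and the pairing with the sum over $n$, so that $\langle\tilde\sigma,\psi_{m,k}\rangle=\sum_n\langle\tilde\sigma_n,\psi_{m,k}\rangle$. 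Each term is an absolutely convergent integral (by Cauchy--Schwarz, since $\tilde\sigma_n/(\mathbf{1}+2v)^{n/2}\in L^2(\RR^{2d})$ and $(\mathbf{1}+2v)^{n/2}\psi_{m,k}\in L^2(\RR^{2d})$) which is computed in polar coordinates exactly as in Theorem \ref{ter11}: the angular integration kills all terms with $m\neq k$ and yields $\langle\tilde\sigma_n,\psi_{k,k}\rangle=(2\pi)^{d/2}(-1)^{|k|}2^{-d}\langle\sigma_n,\mathcal{L}_k\rangle$; summing over $n$ gives $\langle\tilde\sigma,\psi_{k,k}\rangle=\sigma_k$. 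This proves the displayed formula and shows that, on the Hermite side, $W_{\tilde\sigma}$ is (weakly) the diagonal multiplier $\{f_n\}\mapsto\{(2\pi)^{-d/2}\sigma_nf_n\}$.

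The continuity statements then become statements about this multiplier. Since $\{\sigma_n\}_n\in(\sss^{2\alpha})'$, for every $a>1$ there is $C_a>0$ with $|\sigma_n|\le C_aa^{|n|^{1/(2\alpha)}}$, i.e. $\{\sigma_n\}$ grows subexponentially. For $a>b>1$ this gives $\sup_n|\sigma_n|(b/a)^{|n|^{1/(2\alpha)}}<\infty$, whence the multiplier maps $\sss^{2\alpha,a}$ continuously into $\sss^{2\alpha,b}$ and therefore $\sss^{2\alpha}$ into itself; in particular $\{\sigma_nf_n\}\in\sss^{2\alpha}$ when $f\in\SSS^{\alpha}_{\alpha}(\RR^d)$, so $W_{\tilde\sigma}f=(2\pi)^{-d/2}\sum_n\sigma_nf_nh_n$ genuinely lies in $\SSS^{\alpha}_{\alpha}(\RR^d)$, and composing with the isomorphism of Proposition \ref{herexpult} gives continuity of $W_{\tilde\sigma}\colon\SSS^{\alpha}_{\alpha}(\RR^d)\to\SSS^{\alpha}_{\alpha}(\RR^d)$. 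For the dual, the product of two subexponentially growing sequences is again subexponential, and the estimate $\sum_n|\sigma_nf_n|a^{-|n|^{1/(2\alpha)}}\le C_{a_1}\|\{f_n\}\|_{(\sss^{2\alpha})',a/a_1}$ (for any $1<a_1<a$) shows the same multiplier is continuous on $(\sss^{2\alpha})'$; via the dual isomorphism of Proposition \ref{herexpult} this yields a continuous map $(\SSS^{\alpha}_{\alpha}(\RR^d))'\to(\SSS^{\alpha}_{\alpha}(\RR^d))'$, which extends $W_{\tilde\sigma}$ because both are the same diagonal multiplier and the inclusion $\SSS^{\alpha}_{\alpha}(\RR^d)\hookrightarrow(\SSS^{\alpha}_{\alpha}(\RR^d))'$ corresponds to $\sss^{2\alpha}\hookrightarrow(\sss^{2\alpha})'$.

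For the convergence assertion I would first note, via the isomorphism in Theorem \ref{D ocena koef za dual}, that $\sigma^{(j)}\to\sigma$ in $(G^{2\alpha}_{2\alpha}(\RR^d_+))'$ is equivalent to $\{\sigma^{(j)}_n\}_n\to\{\sigma_n\}_n$ in $(\sss^{2\alpha})'$. Strong convergence in $\mathcal{L}(\SSS^{\alpha}_{\alpha}(\RR^d),\SSS^{\alpha}_{\alpha}(\RR^d))$ then follows because a bounded $B\subseteq\SSS^{\alpha}_{\alpha}(\RR^d)$ corresponds to a set of coefficient sequences bounded (say by $M$) in some $\sss^{2\alpha,a}$; choosing $b\in(1,a)$ and writing $c=a/b$ one gets
\[
\sup_{f\in B}\big\|\{(\sigma^{(j)}_n-\sigma_n)f_n\}\big\|_{\sss^{2\alpha,b}}\le M\,\big\|\{\sigma^{(j)}_n-\sigma_n\}\big\|_{(\sss^{2\alpha})',c}\longrightarrow 0.
\]
For $\mathcal{L}((\SSS^{\alpha}_{\alpha}(\RR^d))',(\SSS^{\alpha}_{\alpha}(\RR^d))')$, a bounded $B\subseteq(\SSS^{\alpha}_{\alpha}(\RR^d))'$ is bounded in every seminorm $\|\cdot\|_{(\sss^{2\alpha})',a_2}$; splitting $a=a_1a_2$ gives $\sup_{f\in B}\|\{(\sigma^{(j)}_n-\sigma_n)f_n\}\|_{(\sss^{2\alpha})',a}\le M_{a_2}\|\{\sigma^{(j)}_n-\sigma_n\}\|_{(\sss^{2\alpha})',a_1}\to 0$ for each $a>1$. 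I expect the main obstacle to be the justification of the two interchanges of summation with the distributional pairing in the first step --- everything afterwards is a routine multiplier estimate --- but both interchanges are licensed precisely by the absolute convergences supplied by Proposition \ref{concccwig} and Lemma \ref{jednakost duali}.
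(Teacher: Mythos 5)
Your proposal is correct and follows essentially the same route as the paper: diagonalisation of $W_{\tilde\sigma}$ in the Hermite basis using the absolute convergence supplied by Lemma \ref{jednakost duali} together with the angular-integration computation from Theorem \ref{ter11}, and then reduction of all continuity claims to diagonal multiplier estimates on $\sss^{2\alpha}$ and $(\sss^{2\alpha})'$ via the isomorphisms of Proposition \ref{herexpult} and Theorem \ref{D ocena koef za dual}. The only (harmless) deviation is at the very end, where you prove strong convergence of $W_{\tilde\sigma^{(j)}}$ by a direct uniform estimate over bounded sets, while the paper first establishes pointwise convergence and then upgrades it via the Banach--Steinhaus theorem and the Montel property; both arguments are valid.
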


\begin{proof} Denote $\sigma=\sum_{n\in\NN^d_0}\sigma_n\in (G^{2\alpha}_{2\alpha}(\RR^d_+))'$ (cf. Lemma
\ref{jednakost duali}). Let
$f,g\in\mathcal{S}^{\alpha}_{\alpha}(\mathbb{R}^d)$. Denote
$f_k=\langle f,h_k\rangle$, $g_k=\langle g,h_k\rangle$ and
$s_{k}=(2\pi)^{d/2}(-1)^{|k|}2^{-d}\langle\sigma,\mathcal{L}_k\rangle$.
Similarly as in the first part of the proof of Theorem
\ref{ter11}, one obtains \beas (W_{\tilde{\sigma}}
f)(g)=(2\pi)^{-d/2}\sum_{(m,k)\in\mathbb{N}^{2d}_0}f_mg_k\langle\tilde{\sigma},\psi_{m,k}\rangle,
\eeas where $\psi_{m,k}=W(h_m,h_k)$ and the sum converges
absolutely. Next,
\begin{eqnarray*}
\langle\tilde{\sigma}(x,\xi),\psi_{m,k}(x,\xi)\rangle= \sum_{n\in
\NN^d_0} \int_{\RR^{2d}}
\sigma_n(2v(x,\xi))\psi_{m,k}(x,\xi)dxd\xi.
\end{eqnarray*}
By the same technique as in the proof of Theorem \ref{ter11},
$$\int_{\RR^{2d}} \sigma_n(2v(x,\xi))\psi_{m,k}(x,\xi)dxd\xi
=C_{n,m,k}\prod_{r=1}^d\int_{-\pi}^{\pi}
e^{-i(m_r-k_r)\theta_r}d\theta_r.$$
Thus $\langle\tilde{\sigma},\psi_{m,k}\rangle=0$ for $m\neq k$. Moreover,\\
\\
$\ds \int_{\RR^{2d}} \sigma_n(2v(x,\xi))\psi_{k,k}(x,\xi)dxd\xi$

\begin{eqnarray*}
&=&(2\pi)^{d/2}(-1)^{|k|}2^d\int_{\mathbb{R}^d_+}
\sigma_n(2\rho^2_1\ldots,2\rho^2_d)L_{k}(2\rho^2_1,\ldots,2\rho^2_d)e^{-|\rho|^2} \rho^{\mathbf{1}} d\rho\nonumber\\
&=&(2\pi)^{d/2}(-1)^{|k|}2^{-d}\langle
\sigma_n,\mathcal{L}_{k}\rangle.
\end{eqnarray*}

\noindent Thus,
$\langle\tilde{\sigma}(x,\xi),\psi_{k,k}(x,\xi)\rangle=(2\pi)^{d/2}(-1)^{|k|}2^{-d}
\langle \sigma,\mathcal{L}_k\rangle=s_k$. Hence, we obtain
\begin{equation*}\label{Wong OSNOVNA FORMULA d dim}
(W_{\tilde{\sigma}}
f)(g)=(2\pi)^{-d/2}\sum_{k\in\mathbb{N}^d_0}f_kg_ks_k
\end{equation*}
and the series converges absolutely since
$\{f_k\}_{k\in\mathbb{N}_0^d},\{g_k\}_{k\in\mathbb{N}_0^d}\in
\sss^{2\alpha}$ (see Proposition \ref{herexpult}) and
$\{s_k\}_{k\in\mathbb{N}_0^d}\in (\sss^{2\alpha})'$ (see Theorem
\ref{D ocena koef za dual}).
Observe that for each $n\in\NN^d_0$, $(W_{\tilde{\sigma}}
f)(h_n)=f_ns_n$. Since $\{s_n\}_{n\in\NN^d_0}\in
(\sss^{2\alpha})'$ and $\{f_n\}_{n\in\NN^d_0}\in \sss^{2\alpha}$,
we have $\{f_ns_n\}_{n\in\NN^d_0}\in \sss^{2\alpha}$, i.e.
$W_{\tilde{\sigma}} f\in\SSS^{\alpha}_{\alpha}(\RR^d)$ (by
Proposition \ref{herexpult}). We conclude that $f\mapsto
W_{\tilde{\sigma}} f$, $\SSS^{\alpha}_{\alpha}(\RR^d)\rightarrow
\SSS^{\alpha}_{\alpha}(\RR^d)$, is a well defined linear mapping.
Moreover, $W_{\tilde{\sigma}}f=\sum_{n\in\NN^d_0}f_n s_nh_n$. To
prove the continuity let $B$ be a bounded subset of
$\SSS^{\alpha}_{\alpha}(\RR^d)$. As $\{s_k\}_{k\in\NN^d_0}\in
(\sss^{2\alpha})'$, the set $\{\{f_ns_n\}_{n\in\NN^d_0}|\, f\in
B\}$ is bounded in $\sss^{2\alpha}$, thus
$\{W_{\tilde{\sigma}}f|\, f\in B\}$ is bounded in
$\SSS^{\alpha}_{\alpha}(\RR^d)$. As
$\SSS^{\alpha}_{\alpha}(\RR^d)$ is bornological, $f\mapsto
W_{\tilde{\sigma}}f$, $\SSS^{\alpha}_{\alpha}(\RR^d)\rightarrow
\SSS^{\alpha}_{\alpha}(\RR^d)$, is continuous. By similar
technique, one proves that for each $f\in
(\SSS^{\alpha}_{\alpha}(\RR^d))'$, $W_{\tilde{\sigma}}f
=\sum_{n\in\NN^d_0}f_ns_n h_n\in (\SSS^{\alpha}_{\alpha}(\RR^d))'$
and the mapping, $f\mapsto W_{\tilde{\sigma}}f$,
$(\SSS^{\alpha}_{\alpha}(\RR^d))'\rightarrow (\SSS^{\alpha}_{\alpha}(\RR^d))'$, is continuous.\\
\indent Let $\sigma,\sigma^{(j)}\in
(G^{2\alpha}_{2\alpha}(\RR^d_+))'$, $j\in \NN$, be as assumed in
the theorem, with $\sigma^{(j)}\rightarrow \sigma$ in
$(G^{2\alpha}_{2\alpha}(\RR^d_+))'$. In order to prove
$W_{\tilde{\sigma}^{(j)}}\rightarrow W_{\tilde{\sigma}}$ in the
strong topology of
$\mathcal{L}(\SSS^{\alpha}_{\alpha}(\RR^d),\SSS^{\alpha}_{\alpha}(\RR^d))$
(resp. in the strong topology of
$\mathcal{L}((\SSS^{\alpha}_{\alpha}(\RR^d))',(\SSS^{\alpha}_{\alpha}(\RR^d))')$)
it is enough to prove that for each
$f\in\SSS^{\alpha}_{\alpha}(\RR^d)$ (resp. for each
$f\in(\SSS^{\alpha}_{\alpha}(\RR^d))'$),
$W_{\tilde{\sigma}^{(j)}}f\rightarrow W_{\tilde{\sigma}}f$ in
$\SSS^{\alpha}_{\alpha}(\RR^d)$ (resp. in
$(\SSS^{\alpha}_{\alpha}(\RR^d))'$) since in this case the
Banach-Steinhaus theorem implies convergence in the topology of
precompact convergence and as $\SSS^{\alpha}_{\alpha}(\RR^d)$
(resp. $(\SSS^{\alpha}_{\alpha}(\RR^d))'$) is Montel the
convergence also holds in the strong topology. Thus for the fixed
$f\in\SSS^{\alpha}_{\alpha}(\RR^d)$ (resp. $f\in
(\SSS^{\alpha}_{\alpha}(\RR^d))'$). Theorem \ref{D ocena koef za
dual} implies that $\{s^{(j)}_k\}_{k\in\NN^d_0}\rightarrow
\{s_k\}_{k\in\NN^d_0}$ in $(\sss^{2\alpha})'$. But then
$\{f_ks^{(j)}_k\}_{k\in\NN^d_0}\rightarrow
\{f_ks_k\}_{k\in\NN^d_0}$ in $\sss^{2\alpha}$ (resp. in
$(\sss^{2\alpha})'$), i.e. $W_{\tilde{\sigma}^{(j)}}f\rightarrow
W_{\tilde{\sigma}}f$ in $\SSS^{\alpha}_{\alpha}(\RR^d)$ (resp. in
$(\SSS^{\alpha}_{\alpha}(\RR^d))'$). \qed
\end{proof}

By the similar arguments, one can proof the following theorem.

\begin{thm}
Let $\sigma$ be a measurable function on $\RR^d_+$ such that there
exists $n\in\NN^d_0$ for which $\sigma(\rho)/(\mathbf{1}+\rho)^n
\in L^2(\RR^d_+)$. Then
$\tilde{\sigma}(x,\xi)=\sigma(2v(x,\xi))\in (\SSS(\RR^{2d}))'$.
The Weyl pseudo-differential operator $W_{\tilde{\sigma}}$ is a
continuous mapping from $\mathcal{S}(\mathbb{R}^d)$ into
$\mathcal{S}(\mathbb{R}^d)$ and it extends to continuous mapping from $(\SSS(\RR^d))'$ into $(\SSS(\RR^d))'$.\\
\indent Let $\sigma^{(j)}$, $j\in\NN$, be measurable functions on
$\RR^d_+$ such that for each $j\in\NN$ there exists
$n^{(j)}\in\NN^d_0$ for which
$\sigma_j(\rho)/(\mathbf{1}+\rho)^{n^{(j)}}\in L^2(\RR^d_+)$. If
$\sigma$ is a measurable function on $\RR^d_+$ with the properties
stated above and if $\sigma^{(j)}\rightarrow \sigma$ in
$(\SSS(\RR^d_+))'$, then $W_{\tilde{\sigma}^{(j)}}\rightarrow
W_{\tilde{\sigma}}$ in the strong topology of
$\mathcal{L}(\SSS(\RR^d),\SSS(\RR^d))$ and
$\mathcal{L}((\SSS(\RR^d))',(\SSS(\RR^d))')$.
\end{thm}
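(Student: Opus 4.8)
The plan is to transcribe the proof of the preceding theorem into the Schwartz category, replacing $\SSS^{\alpha}_{\alpha}(\RR^d)$, $(\SSS^{\alpha}_{\alpha}(\RR^d))'$, $\sss^{2\alpha}$, $(\sss^{2\alpha})'$ by $\SSS(\RR^d)$, $(\SSS(\RR^d))'$, $\sss$, $\sss'$, and invoking the Laguerre/Hermite characterisations available in that category. First I would check that $\tilde{\sigma}\in(\SSS(\RR^{2d}))'$. The measurability of $\tilde{\sigma}(x,\xi)=\sigma(2v(x,\xi))$ follows verbatim from the $v_1$-measurability argument in the proof of Lemma \ref{jednakost duali}, and the same passage to polar coordinates gives $\left\|\tilde{\sigma}/(\mathbf{1}+2v)^n\right\|_{L^2(\RR^{2d})}^2=2^{-d}\pi^d\left\|\sigma/(\mathbf{1}+\rho)^n\right\|_{L^2(\RR^d_+)}^2<\infty$. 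Since $(\mathbf{1}+2v(x,\xi))^n$ grows polynomially, this weighted $L^2$-bound shows that $\tilde{\sigma}$ is a tempered distribution and, more importantly, that for every $\varphi\in\SSS(\RR^{2d})$ the pairing $\langle\tilde{\sigma},\varphi\rangle$ is represented by the absolutely convergent integral $\int_{\RR^{2d}}\tilde{\sigma}\varphi\,dx\,d\xi$ (Cauchy--Schwarz applied to $\tilde{\sigma}/(\mathbf{1}+2v)^n$ and $(\mathbf{1}+2v)^n\varphi$). In particular $W_{\tilde{\sigma}}$ is the continuous map $\SSS(\RR^d)\to(\SSS(\RR^d))'$ given by \eqref{Wong Th.12.1}.

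Next I would compute the bilinear form $(W_{\tilde{\sigma}}f)(g)$ for $f,g\in\SSS(\RR^d)$. Writing $f=\sum_n f_nh_n$ and $g=\sum_n g_nh_n$ (convergent in $\SSS(\RR^d)$, with $\{f_n\},\{g_n\}\in\sss$), the continuity of the Wigner transform $\SSS(\RR^d)\times\SSS(\RR^d)\to\SSS(\RR^{2d})$ recorded before Proposition \ref{concccwig} yields $W(f,\overline{g})=\sum_{m,k}f_mg_k\psi_{m,k}$ in $\SSS(\RR^{2d})$, where $\psi_{m,k}=W(h_m,h_k)$. Pairing with $\tilde{\sigma}$ and carrying out the identical polar-coordinate evaluation of Theorem \ref{ter11} (the orthogonality of $e^{-i(m_r-k_r)\theta_r}$ annihilates the off-diagonal terms) gives $\langle\tilde{\sigma},\psi_{m,k}\rangle=0$ for $m\neq k$ and $\langle\tilde{\sigma},\psi_{k,k}\rangle=s_k$ with $s_k=(2\pi)^{d/2}(-1)^{|k|}2^{-d}\langle\sigma,\mathcal{L}_k\rangle$, whence $(W_{\tilde{\sigma}}f)(g)=(2\pi)^{-d/2}\sum_k f_kg_ks_k$. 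By Theorem \ref{razvoj u S+'}, $\{s_k\}\in\sss'$, so the series converges absolutely.

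The mapping properties then follow from the classical Hermite characterisations $\SSS(\RR^d)\cong\sss$ and $(\SSS(\RR^d))'\cong\sss'$ via $u\mapsto\{\langle u,h_n\rangle\}$ (see \cite[Chapter 6]{NR}). Taking $g=h_n$ gives $(W_{\tilde{\sigma}}f)(h_n)=f_ns_n$. As a rapidly decreasing sequence times a polynomially bounded one is rapidly decreasing, $f\in\SSS(\RR^d)$ forces $\{f_ns_n\}\in\sss$, so $W_{\tilde{\sigma}}f=\sum_n f_ns_nh_n\in\SSS(\RR^d)$; since $W_{\tilde{\sigma}}$ maps bounded sets to bounded sets and $\SSS(\RR^d)$ is bornological, it is continuous $\SSS(\RR^d)\to\SSS(\RR^d)$. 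For the extension I would set $W_{\tilde{\sigma}}f=\sum_n f_ns_nh_n$ for $f\in(\SSS(\RR^d))'$; because the product of two polynomially bounded sequences is polynomially bounded, $\{f_n\}\in\sss'$ gives $\{f_ns_n\}\in\sss'$, hence $W_{\tilde{\sigma}}f\in(\SSS(\RR^d))'$, and continuity follows by the same bornological argument as in the preceding theorem. Finally, $\sigma^{(j)}\to\sigma$ in $(\SSS(\RR^d_+))'$ yields $\{s_k^{(j)}\}\to\{s_k\}$ in $\sss'$ by Theorem \ref{razvoj u S+'}, so for fixed $f$ the coefficients $\{f_ks_k^{(j)}\}$ converge to $\{f_ks_k\}$ in $\sss$ (resp. $\sss'$); this gives pointwise convergence $W_{\tilde{\sigma}^{(j)}}f\to W_{\tilde{\sigma}}f$, which Banach--Steinhaus together with the Montel property of $\SSS(\RR^d)$ (resp. $(\SSS(\RR^d))'$) upgrades to convergence in the strong operator topology.

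The hard part will not be any single estimate, since every step is a faithful transcription of the earlier proofs; the one genuinely new point to secure is that the distributional pairing $\langle\tilde{\sigma},\psi_{m,k}\rangle$ is represented by the Lebesgue integral, so that the polar-coordinate computation is legitimate. This is exactly what the weighted $L^2$-bound borrowed from Lemma \ref{jednakost duali} provides, since $\psi_{m,k}(\mathbf{1}+2v)^n\in L^2(\RR^{2d})$.
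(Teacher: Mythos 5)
Your proposal is correct and is exactly the argument the paper intends: the paper gives no separate proof of this theorem, stating only that it follows ``by the similar arguments'' as the preceding one, and your transcription into the Schwartz category (measurability and the weighted $L^2$ identity from Lemma \ref{jednakost duali}, the polar-coordinate diagonalisation $\langle\tilde{\sigma},\psi_{m,k}\rangle=\delta_{m,k}s_k$, the Laguerre characterisation of $(\SSS(\RR^d_+))'$ from Theorem \ref{razvoj u S+'} in place of Theorem \ref{D ocena koef za dual}, and the Hermite isomorphisms $\SSS(\RR^d)\cong\sss$, $(\SSS(\RR^d))'\cong\sss'$ together with the bornological and Banach--Steinhaus/Montel arguments) is precisely that. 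You also correctly isolate the only point needing care, namely that the pairing is represented by an absolutely convergent integral so the polar-coordinate computation is legitimate.
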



\end{document}